\newif\ifdraft
\ifdraft\usepackage[notref,notcite]{showkeys}\fi
\def \Rm {\mathbb R}
\def \Tm {\mathbb T}
\allowdisplaybreaks \numberwithin{equation}{section}
\newtheorem{theorem}{Theorem}[section]
\newtheorem{thm}{Theorem}[section]
\newtheorem{prop}[thm]{Proposition}
\newtheorem{lem}[thm]{Lemma}
\newtheorem{cor}[thm]{Corollary}
\newtheorem*{theorem*}{Theorem}
\newtheorem*{lemma*}{Lemma}
\newtheorem*{proposition*}{Proposition}
\newtheorem*{corollary*}{Corollary}
\theoremstyle{definition}
\newtheorem{definition}[thm]{Definition}
\theoremstyle{remark}
\newtheorem{remark}[theorem]{Remark}
\newtheorem*{remark*}{Remark}
\newcommand{\Chi}[1]{\Chi*{\{#1\}}}
\newcommand\Chi*[1]{\chi_{\raise-.5ex\hbox{$\scriptstyle#1$}}}
\renewcommand{\leq}{\leqslant}
\renewcommand{\geq}{\geqslant}
\begin{document}
 \title{Suppression of chemotactic explosion by mixing}

\begin{abstract}
Chemotaxis plays a crucial role in a variety of processes in biology and ecology. In many instances, processes involving chemical attraction take place in fluids. One of the most studied PDE models of chemotaxis is given by the Keller-Segel equation, which describes a population density of bacteria or mold which attract chemically to substance they secrete. Solutions of the Keller-Segel equation can exhibit dramatic collapsing behavior, where density concentrates positive mass in a measure zero region. A natural question is whether presence of fluid flow can affect singularity formation by mixing the bacteria thus making concentration harder to achieve.

  In this paper, we consider the parabolic-elliptic Keller-Segel equation in two and three dimensions with additional advection term modeling ambient fluid flow. We prove that for any initial data, there exist incompressible fluid flows such that the solution to the equation stays globally regular. On the other hand, it is well known that when the fluid flow is absent, there exist initial data leading to finite time blow up. Thus presence of fluid flow can prevent the singularity formation.

  We discuss two classes of flows that have the explosion arresting property. Both classes are known as very efficient mixers. The first class are the relaxation enhancing (RE) flows of \cite{constantin2008diffusion}. These flows are stationary. The second class of flows are the Yao-Zlatos near-optimal mixing flows \cite{yao2014mixing}, which are time dependent. The proof is based on the nonlinear version of the relaxation enhancement construction of \cite{constantin2008diffusion}, and on some variations of global regularity estimate for the Keller-Segel model.
\end{abstract}
\author{Alexander Kiselev}
\address{\hskip-\parindent
Alexander Kiselev\\
Department of Mathematics\\
Rice University\\
Houston, TX 77005, USA}
\email{kiselev@rice.edu}
\author{Xiaoqian Xu}
\address{\hskip-\parindent
Xiaoqian Xu\\
Department of Mathematics\\
University of Wisconsin-Madison\\
Madison, WI 53706, USA
}
\email{xxu@math.wisc.edu}

\maketitle

\section{Introduction}

Chemotaxis is ubiquitous in biology and ecology. This term is used to describe motion where cells or species sense and attempt to move towards higher (or lower) concentration of some chemical. The first mathematically rigorous studies of chemotaxis effects have been by Patlak \cite{patlak1953random} and Keller-Segel \cite{keller1970initiation}, \cite{keller1971model}. The latter work involved derivation and first analysis of the Keller-Segel system, the most studied model of chemotaxis. The Keller-Segel equation describes a population of bacteria or mold that secrete a chemical and are attracted by it. In one version of the simplified parabolic-elliptic form, this equation can be written in $\mathbb{R}^d$ as (see e.g. \cite{perthame2006transport})
\begin{equation}\label{chemo}
\partial_t \rho - \Delta \rho + \nabla\cdot(\rho \nabla (-\Delta)^{-1}\rho)=0,\quad \rho(x,0)=\rho_0(x).
\end{equation}
The last term on the left hand side describes attraction of $\rho$ by the chemical whose concentration is given by $c(x,t)=(-\Delta)^{-1}\rho(x,t)$. The literature on the Keller-Segel equation is enormous. It is known that in dimensions larger than one, solutions to (\ref{chemo}) can concentrate finite mass in a measure zero region and so blow up in finite time. We refer to \cite{perthame2006transport}, \cite{horstmann20031970}, and \cite{horstmann200319701} for more details and further references.

Typically, chemotactic processes take place in fluid, and often the agents involved in chemotaxis are also advected by the ambient flow. Some of the examples involve monocytes using chemokine signalling to concentrate at a source of infection (see e.g. \cite{deshmane2009monocyte,taub1995monocyte}), sperm and eggs of marine animals practicing broadcast spawning in the ocean (see e.g. \cite{coll1994chemical,miller1985demonstration}), and other numerous instances in biology and ecology. Our goal in this paper is to study the possible effects resulting from interaction of chemotactic and fluid transport processes. Of particular interest to us is the possibility of suppression of finite time blow up due to the mixing effect of fluid flow.
The problem of chemotaxis in fluid flow has been studied before; for example, in a setting similar to ours \cite{KR1}, \cite{kiselev2012biomixing} studied the effect of chemotaxis and fluid advection on the efficiency of absorbing reaction. Moreover, in a series of papers \cite{lorz2010coupled}, \cite{duan2010global}, \cite{di2010chemotaxis}, \cite{liu2011coupled}, \cite{lorz2012coupled} a very interesting problem coupling chemotactic density with fluid
mechanics equations actively forced by this density has been considered in a variety of different settings. The active coupling makes the system more challenging to analyze, but in some cases intriguing results
involving global existence of weak solutions (the definition of which implies lack of the $\delta$-function blow up) have been proved.
These results, however, apply either in the setting where the initial data
is small (see e.g. \cite{lorz2012coupled}) or close to constant \cite{duan2010global}, or in the systems where both chemotactic equation and
the fluid equation have globally regular solutions if not coupled. In other words, to the best of our knowledge, there have been no rigorous results providing an example of suppression of the chemotactic explosion by fluid flow; only results showing that presence of fluid flow does not lead to blow up for the initial data that would not blow up without the flow.

In this paper, our main focus will be on the question whether incompressible fluid flow can arrest the finite time blow up phenomenon which is the key signature of the Keller-Segel model. There are two possible fluid flow effects that can be helpful in finite time blow up prevention. The first applies in infinite regions, where strong flow can help diffusion quickly spread the density so thin
that chemotactic effects become weak. The second effect is more universal and subtle to analyze, and involves mixing in a finite volume. In this case, the concentration may remain significant, but the flow is constantly mixing density and preventing chemotaxis from building up a concentration peak. We are primarily interested in the mixing effect, and so we will consider a finite region setting. It will also be convenient for us to adopt periodic boundary conditions and to consider the Keller-Segel equation with advection on a torus. This is not essential, and many of our results also apply on a finite region with Neumann, Dirichlet or Robin boundary conditions. 

Let us now state our main result. Since we are working on $\mathbb{T}^d$, we will define the concentration of the chemical by factoring out a constant background: $c(x,t)=(-\Delta)^{-1}(\rho(x,t)-\bar{\rho})$. Here $\rho(x,t)\in L^2$ is the species density, and $\bar{\rho}$ is its mean over $\mathbb{T}^d$. The inverse Laplacian can be defined on the Fourier side, or by an appropriate convolution as will be discussed below. Consider the equation
\begin{equation}\label{chemo1}
\partial_t\rho+(u\cdot\nabla) \rho -\Delta \rho + \nabla\cdot(\rho \nabla (-\Delta)^{-1}(\rho-\bar{\rho}))=0,\quad \rho(x,0)=\rho_0(x),\quad x\in \mathbb{T}^d.
\end{equation}
We will prove the following theorem.
\begin{thm}\label{main}
Given any initial data $\rho_0\geq 0$, $\rho_0\in C^{\infty}(\mathbb{T}^d)$, $d=2$ or $3$, there exist smooth incompressible flows $u$ such that the unique solution $\rho(x,t)$ of (\ref{chemo1}) is globally regular in time.
\end{thm}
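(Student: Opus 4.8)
The plan is to combine the linear relaxation-enhancement estimates with an energy-type a priori bound for \eqref{chemo1} that is completely insensitive to the flow. Write $\tilde\rho = \rho - \bar\rho$ for the mean-zero part, $c = (-\lap)^{-1}\tilde\rho$, let $P_0$ denote the projection onto mean-zero functions, and $S_A(t) = e^{t(\lap + A\,u\cdot\grad)}$ the advection-diffusion semigroup with coupling constant $A$. Testing \eqref{chemo1} against $\rho$ and using incompressibility to annihilate the transport term, I would first record the identity
\begin{equation*}
\frac12\frac{d}{dt}\|\tilde\rho\|_{L^2}^2 = -\|\grad\tilde\rho\|_{L^2}^2 + \frac12\int_{\T^d}\tilde\rho^3\,dx + \bar\rho\|\tilde\rho\|_{L^2}^2 ,
\end{equation*}
in which the flow has dropped out entirely. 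Applying Gagliardo--Nirenberg and Young's inequalities to the cubic term shows that $y(t) = \|\tilde\rho(t)\|_{L^2}^2$ satisfies a differential inequality $y' \le 2\bar\rho\, y + C y^{p}$ (with $p = 2$ for $d=2$ and $p=3$ for $d=3$) whose constants are independent of $A$. Hence there is a time $\tau_0 = \tau_0(\|\tilde\rho_0\|_{L^2},\bar\rho,d) > 0$, \emph{independent of $A$}, on which the solution exists, stays smooth, obeys $\|\tilde\rho(t)\|_{L^2}\le 2\|\tilde\rho_0\|_{L^2}$, and satisfies the space-time dissipation bound $\int_0^{\tau_0}\|\grad\tilde\rho\|_{L^2}^2\,dt \le C(\|\tilde\rho_0\|_{L^2},\bar\rho,d)$, again uniformly in $A$. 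I will also use the standard facts that mass is conserved (so $\bar\rho$ is a fixed constant) and nonnegativity is propagated, together with a continuation criterion: a smooth solution persists as long as $\|\rho\|_{L^2}$ stays finite in $d=2$ (respectively a slightly stronger norm, e.g.\ $\|\rho\|_{H^s}$ with $s>1/2$, in $d=3$).

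The core of the argument is a one-window contraction estimate. Absorbing the destabilizing linear chemotactic term into the semigroup through the commuting shift $e^{\bar\rho t}$, Duhamel's formula on $[0,\tau_0]$ reads
\begin{equation*}
\tilde\rho(\tau_0) = e^{\bar\rho\tau_0}S_A(\tau_0)P_0\tilde\rho_0 - \int_0^{\tau_0} e^{\bar\rho(\tau_0-s)}\,S_A(\tau_0-s)\,P_0\,\grad\cdot\!\big(\tilde\rho\,\grad c\big)(s)\,ds .
\end{equation*}
For the first term I would invoke the relaxation-enhancement property of \cite{constantin2008diffusion}: for the fixed window $\tau_0$ and any $\delta>0$ there is $A_0$ such that $\|S_A(\tau_0)P_0\|_{L^2\to L^2}\le\delta$ for $A\ge A_0$, so this term is at most $e^{\bar\rho\tau_0}\delta\|\tilde\rho_0\|_{L^2}$. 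For the Duhamel integral I would split $[0,\tau_0]$ into a bulk part $[0,\tau_0-\eta]$ and a boundary layer $[\tau_0-\eta,\tau_0]$: on the bulk the elapsed time $\tau_0-s\ge\eta$ is bounded below, so enhanced dissipation again supplies a factor $\delta$; on the boundary layer I would use the uniform-in-$A$ parabolic smoothing estimate $\|S_A(t)P_0\,\grad\cdot F\|_{L^2}\le C t^{-1/2}\|F\|_{L^2}$ (valid uniformly in $A$ because $A\,u\cdot\grad$ is skew-adjoint and drops out of the relevant $L^2$ energy identity) together with $\|\tilde\rho\,\grad c\|_{L^2}\le\|\tilde\rho\|_{L^4}\|\grad c\|_{L^4}\lesssim\|\tilde\rho\|_{L^4}\|\tilde\rho\|_{L^2}$, whose time integral is controlled by the $A$-independent space-time dissipation bound from the first step. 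Choosing first $\eta$ small (so that $\int_{\tau_0-\eta}^{\tau_0}(\tau_0-s)^{-1/2}\,ds$ is small) and then $A$ large yields $\|\tilde\rho(\tau_0)\|_{L^2}\le\tfrac12\|\tilde\rho_0\|_{L^2}$.

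With the one-window contraction in hand I would close the proof in two stages. Since smaller data only lengthens the a priori existence window, iterating the contraction over consecutive windows drives $\|\tilde\rho\|_{L^2}$ below any prescribed threshold $\eps_0$ in finitely many steps; one then fixes a single $A$ large enough for all these (finitely many) reduction steps. Once $\|\tilde\rho\|_{L^2}\le\eps_0$ with $\eps_0$ small, the cubic term in the energy identity is genuinely subdominant and the same windowed Duhamel estimate, now with the nonlinearity quadratically small, shows that $\|\tilde\rho\|_{L^2}$ remains below $\eps_0$ and in fact decays over every subsequent window of the fixed length $\tau_0(\eps_0)$, for one fixed $A$; combined with the continuation criterion this gives global regularity. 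For the time-dependent Yao--Zlatos flows of \cite{yao2014mixing} the scheme is identical, with $S_A$ replaced by the corresponding evolution family and the relaxation-enhancement contraction replaced by the quantitative near-optimal mixing estimate, which likewise forces decay of $\|\tilde\rho\|_{L^2}$ over a fixed window once the amplitude is large.

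The main obstacle is precisely the one-window contraction, and specifically the interplay between the destabilizing chemotactic nonlinearity and the mixing. Two features make this delicate. First, the constant state $\bar\rho$ is linearly unstable as soon as $\bar\rho$ exceeds the spectral gap of $-\lap$ on $\T^d$, so mere smallness of $\|\tilde\rho_0\|_{L^2}$ cannot by itself give global regularity in the absence of flow; it is essential that relaxation enhancement produces an effective dissipation rate beating $\bar\rho$ over the window, which is why the instability is absorbed into the commuting factor $e^{\bar\rho t}$ and then defeated by the contraction of \cite{constantin2008diffusion}. Second, the nonlinear term carries a derivative, so its Duhamel contribution must be handled by parabolic smoothing near $s=\tau_0$, and the estimate closes only because all the relevant energy and space-time dissipation bounds are uniform in $A$ (the incompressible flow having dropped out of every $L^2$ identity). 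Verifying that the enhanced-dissipation and smoothing constants cooperate, so that the window length $\tau_0$ can be kept bounded below while $A\to\infty$, is the crux of the proof; the $3$D case is the more demanding one because of the stronger norm required in the continuation criterion and the less favorable Gagliardo--Nirenberg exponents.
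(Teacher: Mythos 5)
The decisive step of your argument --- the one-window contraction --- rests on the claim that the advection--diffusion semigroup satisfies $\|S_A(t)P_0\,\grad\cdot F\|_{L^2}\le Ct^{-1/2}\|F\|_{L^2}$ \emph{uniformly in $A$}, justified by the skew-adjointness of $A\,u\cdot\grad$. This is a genuine gap. Skew-adjointness gives $\|S_A(t)\|_{L^2\to L^2}\le 1$ and the time-integrated dissipation identity, but the claimed bound is, by duality, equivalent to $\|S_A(t)^*\|_{L^2\to\dot H^1}\le Ct^{-1/2}$, and the advection term does \emph{not} drop out of the $\dot H^1$ energy identity: one only gets $\frac{d}{dt}\|\grad h\|_{L^2}^2\le 2A\|\grad u\|_{L^\infty}\|\grad h\|_{L^2}^2-2\|\Delta h\|_{L^2}^2$, so the best uniform-in-time bound this yields is $\|\grad h(t)\|_{L^2}\lesssim\sqrt{A}\,\|h_0\|_{L^2}$. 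Worse, this is essentially sharp: a mixing flow of amplitude $A$ drives an order-one-frequency datum to the Batchelor scale $\sim A^{-1/2}$ in time $O(A^{-1}\log A)$, after which $\|\grad h(t)\|_{L^2}\sim\sqrt{A}\,\|h(t)\|_{L^2}$, which on your boundary layer $[\tau_0-\eta,\tau_0]$ (with $\eta$ fixed \emph{before} $A$) exceeds $Ct^{-1/2}\|h_0\|_{L^2}$ by an unbounded factor as $A\to\infty$. The very mechanism you exploit --- creation of small scales --- is what destroys the smoothing constant. The same issue reappears in the bulk term, where you must still peel the derivative off $\grad\cdot(\tilde\rho\,\grad c)$ before invoking enhanced dissipation, and any such peeling reintroduces a factor growing with $A$. (A time-integrated substitute does hold uniformly in $A$: a $TT^*$ argument using $\int_0^T\|\grad S_A(T-s)^*g\|_{L^2}^2\,ds\le\tfrac12\|g\|_{L^2}^2$ gives $\|\int_0^T S_A(T-s)\grad\cdot F(s)\,ds\|_{L^2}\le C\bigl(\int_0^T\|F(s)\|_{L^2}^2\,ds\bigr)^{1/2}$; but this makes the Duhamel term $O(\|\tilde\rho_0\|_{L^2}^2)$ with an $A$-independent constant, so smallness relative to $\tfrac12\|\tilde\rho_0\|_{L^2}$ is lost for large data and the splitting would have to be reworked from scratch. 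None of this is in your write-up.)

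This is exactly the point where the paper takes a different route: it never estimates the semigroup at order-one times, but instead compares the nonlinear solution with the solution of the \emph{pure transport} equation in $L^2$ over windows of physical length $\tau=T_c/A$ (Lemma~\ref{5.2}), so that the unavoidable $e^{CAt}$ growth of the transport solution's $\dot H^1$ norm contributes only $A^{-1}\int_0^{T_c}F(s)^2\,ds\to 0$; mixing is quantified not through decay of $\|S_A(\tau_0)P_0\|$ but through the RAGE-type Lemma~\ref{6.3} (time-averaged smallness of the low-mode projection of the transported density), which forces the time average of $\|\rho^A\|_{\dot H^1}^2$ to exceed $\tfrac12\lambda_N B^2$ on the window, and Proposition~\ref{L2decay} then converts a large $\dot H^1$ norm into decay of the $L^2$ norm. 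A secondary problem: you assert the continuation criterion rather than prove it, and your hedge that $d=3$ requires control of $\|\rho\|_{H^s}$ with $s>1/2$ would, if taken literally, break your own argument, since your scheme only controls $L^2$; the correct statement (Theorem~\ref{L2cr}) is that finiteness of $\int_0^t\|\rho-\bar\rho\|_{L^2}^{4/(4-d)}\,d\tau$ suffices for $d=2,3$, and establishing it is a nontrivial part of the paper.
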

\begin{remark}
1. We will provide more details on the specific choice of the flows later. \\
2. The restriction $d=2$ or $3$ is essential for our method to work. The case $d=4$ is in some sense borderline for our estimates: the needed estimates just fail. Extending our approach to this case, and especially to $d>4,$ would require new ideas. \\
Please check Remark~\ref{dim} after Theorem~\ref{L2cr} for more discussion. \\
3. As we already mentioned, the periodic boundary condition is not very essential. One can get a similar result in the bounded domain; one issue is that examples of stationary RE flows are not available in a general
bounded domain. On the other hand, the Yao-Zlatos flows can be constructed in general bounded domains and so can be used to achieve an analogous result in this more general setting.
See Remark \ref{noslip} for more details. \\
4. The theorem certainly holds under weaker assumptions on the regularity of the initial data. In this paper, for the sake of presentation, we do not make an effort to optimize the regularity conditions.
The scheme of our proof and the kinds of the flow examples that we have will make the connection between mixing properties of the flow and its ability to suppress the chemotactic blow up quite explicit.
\end{remark}

It is well known that the solutions to the Keller-Segel equation can form singularities in finite time. The first rigorous proof of this result in the case where domain is a two-dimensional disk was given by J{\"a}ger and Luckhaus \cite{jager1992explosions}. Their proof is based on radial geometry and comparison principles. Nagai \cite{nagai2001blowup} has provided a proof of finite time blow up in more general bounded domains. We have not found a finite time blow up proof for the periodic case in the literature. Although it can be obtained by modification of the existing arguments, in the appendix we will provide a short independent construction of such examples in the case of two spatial dimensions. This will imply that Theorem \ref{main} indeed provides examples of the suppression of chemotactic explosion by fluid mixing.

We note that fluid advection has been conjectured to regularize singular nonlinear dynamics before. The most notable example is the case of the $3$D Navier-Stokes and Euler equations. Constantin \cite{PC} has proved possibility of finite time blow up for
the $3$D Euler equation in $\Rm^3$ if the pure advection term in the vorticity formulation is removed from the equation. Hou and Lei have obtained numerical evidence for the finite time blow up in a system obtained from the $3$D Navier-Stokes equation by the removal of the pure transport terms \cite{lei2009stabilizing}. In fact, finite time blow up has been also proved rigorously in some related modified model settings \cite{hou2011singularity}, \cite{hou2012singularity}, \cite{hou2014finite}. Of course, the proof of the global regularity of the $3$D  Navier-Stokes remains an outstanding open problem, so whether the $3$D Navier-Stokes equation exhibits ``advection regularization'' is an open question. See also \cite{lartiti} for more discussion.
As another example of related philosophy, we mention the paper \cite{berestycki2010explosion} on the elliptic problem with ``explosion'' type reaction. There is no time variable and so no finite time blow up in this paper,
  but it shows that certain flows can significantly affect the ``explosion threshold'': namely, the value of the reaction coupling parameter beyond which there exist no regular positive solutions. To the best of our knowledge, the examples that we construct here are the first rigorous examples of the suppression of finite time blow up by fluid mixing in nonlinear evolution setting. It should be possible to extend our method to cover some other situations, which we will briefly discuss in the last section.

The paper is organized as follows.
In Section $2$ we prove the $L^2$ -based global regularity criterion that we will use. In particular, it guarantees global regularity as far as the $L^2$ norm of the solutions remains bounded. In Section $3$, we set up the strategy for controlling $L^2$ norm via $\dot{H}^1$ norm of the solution. Despite the presence of nonlinearity, due to the dissipation term, if the $\dot{H}^1$ norm is sufficiently large, the $L^2$ norm has to decay. Basically, what this implies is that the solution to the Keller-Segel equation cannot
blow up in the manner which makes $\dot{H}^1$ norm of the solution grow much faster than the $L^2$ norm.
In Section $4$, we prove the key result on approximation of the solution of the Keller-Segel equation by solution of pure advection equation on small time scales. This result sets up our strategy to use the fluid advection to drive up the $\dot{H}^1$ norm of the
solution to the full Keller-Segel equation by fluid mixing in order to keep the $L^2$ norm in check. Indeed, the highly mixing flows are known to increase the $\dot{H}^1$ norm of the solution of the advected passive scalar. If this solution stays close to the solution of the nonlinear problem long enough, this guarantees that the $\dot{H}^1$ norm of the nonlinear solution will be large, too.
In Section $5$, we put together all the components of the argument and prove that the relaxation enhancing flows of \cite{constantin2008diffusion} suppress chemotactic blow up.
We focus on the case of weakly mixing flows. In Section $6$ we outline another example of flows with analogous property, the Yao-Zlatos efficient mixing flows. Finally, in Section $7$ we briefly discuss possible future extensions. There are two appendices: the first one contains construction of finite time blow up in the Keller-Segel equation with periodic data, and the second includes sketches of proofs of some functional
inequalities that we use in the paper.

Throughout the paper, $C$ will stand for universal constants that may change from line to line.

\textbf{Acknowledgment.} AK has been partially supported by the NSF grant DMS-1412023. XX has been partially supported by the NSF grant DMS-1535653. We thank Eitan Tadmor, Changhui Tan, Yao Yao and Andrej Zlatos for helpful discussions.

\section{Global existence: the $L^2$ criterion}

In this section, we will show that to get the global regularity of (\ref{chemo1}), we only need to have certain control of the spatial $L^2$ norm of the solution. The following theorem is a direct analog of Theorem 3.1 in \cite{kiselev2012biomixing}, where it was proved in the $\mathbb{R}^2$ setting. We will provide a sketch of the proof for the sake of completeness. Throughout the paper, we will use notation $\dot{H}^s$ for the homogeneous Sobolev space in spatial coordinates, that is we set
$$
\|f\|_{\dot{H}^s}^2=\sum_{k\in\mathbb{Z}^d\setminus \{0\}}|k|^{2s}|\hat{f}(k)|^2.
$$

\begin{thm}\label{L2cr}
Suppose that $\rho_0 \in C^{\infty}(\mathbb{T}^d)$, $d=2$ or $3$, $\rho_0\geq 0$, and suppose that $u\in C^{\infty}$ is divergence free. Assume $[0,T]$ is the maximal interval of existence of the unique smooth solution $\rho(x,t)$ of the equation (\ref{chemo1}). Then we must have
\begin{equation}\label{13th}
\int_0^t\|\rho(\cdot, \tau)-\bar{\rho}\|_{L^2(\mathbb{T}^d)}^{\frac{4}{4-d}}d\tau\xrightarrow{t\rightarrow T}\infty.
\end{equation}
\end{thm}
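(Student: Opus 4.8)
The plan is to argue by contraposition: I will show that if the quantity in \eqref{13th} stays bounded as $t\to T$, then all Sobolev norms of $\rho$ remain bounded on $[0,T)$, so that the smooth solution extends past $T$, contradicting the maximality of $[0,T]$. Two preliminary facts are convenient. Integrating \eqref{chemo1} over $\mathbb{T}^d$ and using that $u$ is divergence free and that the remaining terms are in divergence form gives $\frac{d}{dt}\int_{\mathbb{T}^d}\rho\,dx=0$, so $\bar{\rho}$ is constant in time. Moreover, writing $c=(-\Delta)^{-1}(\rho-\bar{\rho})$ and expanding the chemotactic term as $\nabla\cdot(\rho\nabla c)=\nabla\rho\cdot\nabla c-\rho(\rho-\bar{\rho})$ recasts \eqref{chemo1} as the linear drift--diffusion equation $\partial_t\rho+(u+\nabla c)\cdot\nabla\rho-\Delta\rho=(\rho-\bar{\rho})\rho$, whence the maximum principle preserves $\rho\geq 0$.

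The heart of the matter is an $L^2$ energy estimate. Setting $v=\rho-\bar{\rho}$, which has zero mean and satisfies $\|v\|_{\dot H^1}=\|\nabla\rho\|_{L^2}$, I would multiply \eqref{chemo1} by $\rho$ and integrate over $\mathbb{T}^d$. The advection term drops out since $u$ is divergence free, the dissipation contributes $-\|v\|_{\dot H^1}^2$, and integrating the chemotactic term by parts twice using $\Delta c=-(\rho-\bar{\rho})$ produces $\tfrac12\int_{\mathbb{T}^d}\rho^2(\rho-\bar{\rho})\,dx$. Expanding $\rho=v+\bar{\rho}$ and using $\int_{\mathbb{T}^d}v\,dx=0$ then yields the identity
\[
\frac12\frac{d}{dt}\|v\|_{L^2}^2=-\|v\|_{\dot H^1}^2+\frac12\int_{\mathbb{T}^d}v^3\,dx+\bar{\rho}\,\|v\|_{L^2}^2 .
\]

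Everything hinges on absorbing the cubic term into the dissipation, and this is both the key step and the point where the dimensional restriction is forced. The Gagliardo--Nirenberg inequality for mean-zero functions on $\mathbb{T}^d$ gives
\[
\int_{\mathbb{T}^d}v^3\,dx\leq\|v\|_{L^3}^3\leq C\,\|v\|_{L^2}^{3-d/2}\,\|v\|_{\dot H^1}^{d/2}.
\]
The exponent $d/2$ on the dissipative factor is strictly smaller than $2$ exactly when $d\in\{2,3\}$, so Young's inequality permits
\[
\frac12\int_{\mathbb{T}^d}v^3\,dx\leq\frac12\|v\|_{\dot H^1}^2+C\,\|v\|_{L^2}^{\frac{4}{4-d}}\,\|v\|_{L^2}^2 ,
\]
the surviving power of $\|v\|_{L^2}$ being precisely $2+\tfrac{4}{4-d}=\tfrac{2(6-d)}{4-d}$. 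For $d=4$ this exponent equals that of the dissipation and absorption is only borderline possible for small data, while for $d>4$ it fails outright; this is the dimensional obstruction noted in the remarks, and the single genuine difficulty of the argument. Substituting and discarding the remaining negative dissipation leaves
\[
\frac{d}{dt}\|v\|_{L^2}^2\leq\Big(C\,\|v\|_{L^2}^{\frac{4}{4-d}}+2\bar{\rho}\Big)\|v\|_{L^2}^2 .
\]

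Finally, Grönwall's inequality, combined with the assumed finiteness of $\int_0^T\|v\|_{L^2}^{4/(4-d)}\,d\tau$ and the boundedness of $\bar{\rho}$ on the finite interval $[0,T]$, yields $\sup_{[0,T)}\|v\|_{L^2}<\infty$; reinserting this bound into the energy identity also gives $\rho\in L^2\big([0,T];\dot H^1\big)$. With $L^\infty_tL^2_x$ and $L^2_t\dot H^1_x$ control in hand, I would bootstrap: running the analogous energy estimate for $\|\rho\|_{\dot H^1}^2$, the nonlinear contributions are now dominated by the already-controlled norms through the same type of interpolation, producing $L^\infty_t\dot H^1_x\cap L^2_t\dot H^2_x$ bounds, and iterating upward controls every $\dot H^s$ norm on $[0,T)$. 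This contradicts the maximality of $[0,T]$ and establishes \eqref{13th}. The bootstrap steps are routine once the critical cubic estimate above is in place.
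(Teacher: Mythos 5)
Your argument is correct in substance but is organized differently from the paper's proof. The paper never runs the $L^2$ energy estimate inside the proof of Theorem \ref{L2cr}: it works directly at the level of $\|\rho\|_{\dot H^s}^2$ for integer $s\ge 2$, multiplying by $(-\Delta)^s\rho$, decomposing the nonlinear terms via Leibniz into pieces $\int D^l\rho\, D^{s-l}(\rho-\bar\rho)\,D^s\rho$, interpolating each against the $\dot H^{s+1}$ dissipation with Gagliardo--Nirenberg, and arriving at a Gr\"onwall inequality whose coefficient is exactly $C(\|\rho-\bar\rho\|_{L^2}^{4/(4-d)}+\bar\rho^2+\|u\|_{C^s})$ --- integrable in time by hypothesis. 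You instead first upgrade the integrability hypothesis to $\sup_{[0,T)}\|\rho-\bar\rho\|_{L^2}<\infty$ via the $L^2$ identity (which in the paper appears as equation (\ref{L2H1}) in Proposition \ref{L2decay}, proved for a different purpose), and then bootstrap. Your route buys a cleaner bootstrap, since after step one all interpolation coefficients are uniformly bounded rather than merely time-integrable; the paper's route avoids the intermediate step and exhibits the criticality of the exponent $4/(4-d)$ directly at every order $s$. Your identification of the critical cubic estimate $\|v\|_{L^3}^3\le C\|v\|_{L^2}^{3-d/2}\|v\|_{\dot H^1}^{d/2}$ and of the resulting dimensional obstruction at $d=4$ is exactly the paper's.

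The one place where you are too quick is the phrase ``the bootstrap steps are routine.'' That bootstrap is precisely what the paper's proof consists of, and it requires two ingredients your sketch does not mention: the $L^p$-boundedness of the double Riesz transform $\partial_{ij}(-\Delta)^{-1}$, needed to handle the terms coming from $\int(\nabla\rho)\cdot\nabla(-\Delta)^{-1}(\rho-\bar\rho)\,(-\Delta)^s\rho\,dx$ after $s$ integrations by parts, and the extra integration by parts that converts the top-order term $\int(\partial_{i_1}\cdots\partial_{i_s}\nabla\rho)\cdot\nabla(-\Delta)^{-1}(\rho-\bar\rho)\,\partial_{i_1}\cdots\partial_{i_s}\rho\,dx$ into $\tfrac12\int(\partial_{i_1}\cdots\partial_{i_s}\rho)^2(\rho-\bar\rho)\,dx$, without which that term has one derivative too many to close. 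With those supplied, your scheme does go through, and given your a priori $L^2$ bound the resulting estimates are subcritical and close at every order.
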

\begin{remark}\label{dim}
In other words, the smooth solution can be continued as far as the integral in time of the appropriate power of the $L^2$ norm of solution in space stays finite. Note that the mean value of $\rho$ is conserved by the evolution, so $\bar{\rho}(\cdot,t)\equiv\bar{\rho}_0$. We will denote it $\bar{\rho}$ throughout the rest of the paper.
One may or may not include $\bar{\rho}$ into (\ref{13th}), these criteria are equivalent. One can verify that the scaling of (\ref{13th}) is sharp in the sense that it is a critical quantity for (\ref{chemo1}).
One way to see this criticality is through an informal scaling argument or dimensional analysis. Indeed, one can check that $\lambda^2\rho(\frac{x}{\lambda},\frac{t}{\lambda^2})$ is a solution to (\ref{chemo1}) (with a different period) for every $\lambda$ as long as $\rho(x,t)$ is a solution. The quantity in (\ref{13th}) with respect to this scaling. This observation also means that for $d\geq 4$ the boundedness of the $L^2$ norm may not be sufficient in order to get the regularity of the solution to (\ref{chemo1}).
\end{remark}

\begin{proof}
The existence and uniqueness of smooth local solution can be proved by standard methods, so we will focus on global regularity. Let $s\geq 2$ be integer. Multiply (\ref{chemo1}) by $(-\Delta)^s\rho$ and integrate. We get
\begin{align*}
\frac{1}{2}\partial_t \|\rho\|_{\dot{H}^s}^2&\leq \left|\int_{\mathbb{T}^d}(\nabla\rho)\cdot (\nabla(-\Delta)^{-1}(\rho-\bar{\rho}))(-\Delta )^s\rho \,dx \right|\\
&+\left|\int_{\mathbb{T}^d}\rho(\rho-\bar{\rho})(-\Delta)^s\rho \,dx\right|+C \|u\|_{C^s}\|\rho\|^2_{\dot{H}^s}-\|\rho\|^2_{\dot{H}^{s+1}}.
\end{align*}
Here we integrated by parts $s$ times and used incompressibility of $u$ to obtain
$$
\left|\int_{\mathbb{T}^d}(u\cdot \nabla)\rho(-\Delta)^s\rho \,dx\right|\leq C\|u\|_{C^s}\|\rho\|^2_{\dot{H}^s}.
$$
Consider the expression
$$
\int_{\mathbb{T}^d}\rho(\rho-\bar{\rho})(-\Delta)^s\rho dx.
$$
Integrating by parts, we can represent this integral as a sum of  terms of the form
$$
\int_{\mathbb{T}^d}D^l\rho D^{s-l}(\rho-\bar{\rho}) D^s \rho \,dx,
$$
where $l=0,1,...,s$ and $D$ denotes any partial derivative. By H\"{o}lder inequality, we have
$$
\int_{\mathbb{T}^d}D^l\rho D^{s-l}(\rho-\bar{\rho})D^s \rho \,dx\leq\|D^l \rho\|_{L^{p_l}}\|D^{s-l}(\rho-\bar{\rho})\|_{L^{q_l}}\|\rho\|_{\dot{H}^s},
$$
with any $2\leq p_l, q_l \leq \infty$ satisfying $p_l^{-1}+q_l^{-1}=1/2$. For any integer $0\leq m\leq n$, and mean zero $f\in C^{\infty}(\mathbb{T}^d)$, we have Gagliardo-Nirenberg inequality
\begin{equation}\label{14th}
\|D^m f\|_{L^p}\leq C\| f \|_{L^2}^{1-a}\|f\|_{\dot{H}^{n}}^a, \quad a=\frac{m-\frac{d}{p}+\frac{d}{2}}{n},
\end{equation}
which holds for $2\leq p \leq \infty$ unless $a=1,$ and if $a=1$ for $2\leq p<\infty$. We will sketch a short proof of (\ref{14th}) in the appendix to make the paper more self-contained.

Take $p_l=\frac{2s}{l}$, $q_l=\frac{2s}{s-l}$. Then for all $l>0$, applying (\ref{14th}), we get
$$
\|D^l\rho\|_{L^{p_l}}\|D^{s-l}(\rho-\bar{\rho})\|_{L^{q_l}}\leq C\|\rho-\bar{\rho}\|_{L^2}^{\frac{2s-d+4}{2(s+1)}}\|\rho\|_{\dot{H}^{s+1}}^{\frac{2s+d}{2(s+1)}}.
$$
In the $l=0$ case, we use
$$
\|\rho\|_{L^{\infty}}\leq \|\rho-\bar{\rho}\|_{L^{\infty}}+\bar{\rho}\leq C\|\rho-\bar{\rho}\|_{L^2}^{1-\frac{d}{2(s+1)}}\|\rho\|_{\dot{H}^{s+1}}^{\frac{d}{2(s+1)}}+\bar{\rho}.
$$
Therefore
$$
\left|\int_{\mathbb{T}^d}\rho(\rho-\bar{\rho})(-\Delta)^s\rho \,dx \right|\leq C\left(\|\rho-\bar{\rho}\|_{L^2}^{1-\frac{d}{2(s+1)}}\|\rho\|_{\dot{H}^{s+1}}^{\frac{d}{2(s+1)}}+\bar{\rho}\right) \|\rho\|^{\frac{s}{s+1}}_{\dot{H}^{s+1}}\|\rho-\bar{\rho}\|_{L^2}^{\frac{1}{s+1}}\|\rho\|_{\dot{H}^s}.
$$
Next, consider
$$
\int_{\mathbb{T}^d}(\nabla\rho)\cdot (\nabla(-\Delta)^{-1}(\rho-\bar{\rho}))(-\Delta)^s\rho \,dx.
$$
Integrating by parts $s$ times, we get terms that can be estimated similarly to the previous case, using the fact that the double Riesz transform $\partial_{ij}(-\Delta)^{-1}$ is bounded on $L^p$, $1<p<\infty$. The only exceptional terms that appear which have different structure are
$$
\int_{\mathbb{T}^d}(\partial_{i_1}...\partial_{i_s}\nabla\rho)\cdot(\nabla(-\Delta)^{-1}(\rho-\bar{\rho}))\partial_{i_1}...\partial_{i_s}\rho \,dx.
$$
But these can be reduced to
$$
\int_{\mathbb{T}^d}(\partial_{i_1}...\partial_{i_s}\rho)^2(\rho-\bar{\rho}) \,dx
$$
by another integration by parts, and estimated as before. Altogether, we get
\begin{equation}\label{15th}
\begin{split}
\frac{1}{2}\partial_t \|\rho\|_{\dot{H}^s}^2&\leq C\left(\|\rho-\bar{\rho}\|_{L^2}^{1-\frac{d}{2(s+1)}}\|\rho\|_{\dot{H}^{s+1}}^{\frac{d}{2(s+1)}}+\bar{\rho}\right) \|\rho\|^{\frac{s}{s+1}}_{\dot{H}^{s+1}}\|\rho-\bar{\rho}\|_{L^2}^{\frac{1}{s+1}}\|\rho\|_{\dot{H}^s}\\
&+C\|u\|_{C^s}\|\rho\|_{\dot{H}^s}^2-\|\rho\|_{\dot{H}^{s+1}}^2.
\end{split}
\end{equation}
Observe that
\begin{equation}\label{sobeasy56}
\|\rho\|_{\dot{H}^s}\leq \|\rho-\bar{\rho}\|_{L^2}^{\frac{1}{s+1}}\|\rho\|_{\dot{H}^{s+1}}^{\frac{s}{s+1}}.
\end{equation}
Split the first term on the right hand side of (\ref{15th}) into two parts, and estimate them as follows. First,
\begin{align*}
C\|\rho-\bar{\rho}\|_{L^2}^{\frac{2s-d+4}{2(s+1)}}\|\rho\|_{\dot{H}^{s+1}}^{\frac{2s+d}{2(s+1)}}\|\rho\|_{\dot{H}^s}&\leq\|\rho-\bar{\rho}\|_{L^2}\|\rho\|_{\dot{H}^{s+1}}^{\frac{d}{2}}\|\rho\|_{\dot{H}^s}^{2-\frac{d}{2}}\\
&\leq \frac{1}{4} \|\rho\|_{\dot{H}^{s+1}}^2+C\|\rho-\bar{\rho}\|_{L^2}^{\frac{4}{4-d}}\|\rho\|_{\dot{H}^s}^2.
\end{align*}
Second,
\begin{align*}
\bar{\rho}\|\rho\|_{\dot{H}^{s+1}}^{\frac{s}{s+1}}\|\rho-\bar{\rho}\|_{L^2}^{\frac{1}{s+1}}\|\rho\|_{\dot{H}^s}&\leq \bar{\rho}\|\rho\|_{\dot{H}^s}\|\rho\|_{\dot{H}^{s+1}}\\
&\leq \frac{1}{4}\|\rho\|_{\dot{H}^{s+1}}^2+C\bar{\rho}^2\|\rho\|_{\dot{H}^s}^2.
\end{align*}
We used the Poincare inequality and \eqref{sobeasy56} in the first step. Recall the following Nash-type inequality
\begin{equation}\label{16th}
\|\rho\|_{\dot{H}^s}\leq C\|\rho\|_{\dot{H}^{s+1}}^{\frac{2s+d}{2s+2+d}}\|\rho\|_{L^1}^{\frac{2}{2s+2+d}},
\end{equation}
the proof of which will be sketched in the appendix. Since $\rho(x,t)\geq 0$ and hence $\|\rho(\cdot,t)\|_{L^1}=\bar{\rho}>0$ is conserved in time, putting all estimates into (\ref{15th}) we get
\begin{equation}\label{17th}
\frac{1}{2}\partial_t\|\rho\|_{\dot{H}^s}^2\leq C\left(\|\rho-\bar{\rho}\|_{L^2}^{\frac{4}{4-d}}+\bar{\rho}^2+\|u\|_{C^s}\right)\|\rho\|_{\dot{H}^s}^2-c\bar{\rho}^{-\frac{4}{2s+d}}\|\rho\|_{\dot{H}^s}^{2+\frac{4}{2s+d}}.
\end{equation}
From this differential inequality and integrability of $\|\rho(\cdot, t)-\bar{\rho}\|_{L^2(\mathbb{T}^d)}^{\frac{4}{4-d}}$ in time, a finite upper bound for $\|\rho(\cdot, t)\|_{\dot{H}^s}$ follows for all times. In fact, due to the last term on the right hand side of (\ref{17th}), it is not hard to show that there is a global, not growing in time, upper bound for any $\dot{H}^s$ norm of $\rho$.
\end{proof}

\section{An $\dot{H}^1$ condition for an absorbing set in $L^2$}

Due to Theorem \ref{L2cr}, to show global regularity of the solution $\rho(x,t)$ to (\ref{chemo1}), it suffices to control its $L^2$ norm in spatial variables. In this section, we prove a simple criterion that says that if the $\dot{H}^1$ norm of a solution is sufficiently large compared to its $L^2$ norm, then in fact the $L^2$ norm is decaying. Our overall strategy will be then to show that mixing can increase and sustain the $\dot{H}^1$ norm of solution. This will block the $L^2$ norm from ever growing too much, leading to global regularity.
\begin{prop}\label{L2decay}
Let $\rho(x,t)\geq 0$ be smooth local solution to (\ref{chemo1}) set on $\mathbb{T}^d$, $d=2$ or $3$. Suppose that $\|\rho(\cdot,t)-\bar{\rho}\|_{L^2}\equiv B>0$ for some $t\geq 0$. Then there exists a universal constant $C_1$ such that
\begin{equation}\label{18th}
\|\rho(\cdot, t+\tau)-\bar{\rho}\|_{L^2}\leq 2B \mbox{ for every } 0\leq \tau \leq C_1\min(1,\bar{\rho}^{-1},B^{-\frac{4}{4-d}}).
\end{equation}
Moreover, there exists a universal constant $C_0$ such that if, in addition,
\begin{equation}\label{19th}
\|\rho(\cdot,t)\|_{\dot{H}^1}^2\geq B_1^2\equiv C_0 B^{\frac{12-2d}{4-d}}+2\bar{\rho}B^2,
\end{equation}
then $\partial_t \|\rho(\cdot, t)\|_{L^2}<0$.
\end{prop}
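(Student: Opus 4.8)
The plan is to differentiate the $L^2$ norm of the fluctuation $\psi\defeq\rho-\bar{\rho}$ in time and extract a favorable differential inequality. Since the mean is conserved we have $\partial_t\rho=\partial_t\psi$, $\|\rho\|_{\dot{H}^1}=\|\psi\|_{\dot{H}^1}$, and $\|\rho\|_{L^2}^2=\|\psi\|_{L^2}^2+\text{const}$, so it suffices to work with $\psi$. Multiplying \eqref{chemo1} by $\psi$ and integrating over $\mathbb{T}^d$, the three transport/nonlinear terms are handled as follows. The advection term vanishes because $u$ is divergence free and $\nabla\rho=\nabla\psi$, giving $\int_{\mathbb{T}^d}\psi\,(u\cdot\nabla)\psi\,dx=\tfrac12\int_{\mathbb{T}^d}u\cdot\nabla(\psi^2)\,dx=0$. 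The dissipation term gives $\int_{\mathbb{T}^d}\psi\Delta\psi\,dx=-\|\rho\|_{\dot{H}^1}^2$. For the chemotactic term, writing $\rho=\psi+\bar{\rho}$ and integrating by parts using $-\Delta(-\Delta)^{-1}\psi=\psi$, I expect
\begin{equation*}
-\int_{\mathbb{T}^d}\psi\,\nabla\cdot\bigl(\rho\,\nabla(-\Delta)^{-1}\psi\bigr)\,dx=\frac12\int_{\mathbb{T}^d}\psi^3\,dx+\bar{\rho}\,\|\psi\|_{L^2}^2,
\end{equation*}
so that, collecting terms,
\begin{equation*}
\frac12\partial_t\|\psi\|_{L^2}^2=-\|\rho\|_{\dot{H}^1}^2+\frac12\int_{\mathbb{T}^d}\psi^3\,dx+\bar{\rho}\,\|\psi\|_{L^2}^2.
\end{equation*}

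The heart of the matter is controlling the cubic term by the dissipation. I would bound $\int_{\mathbb{T}^d}\psi^3\,dx\le\|\psi\|_{L^3}^3$ and apply the Gagliardo--Nirenberg inequality \eqref{14th} with $m=0$, $n=1$, $p=3$, whose interpolation exponent is $a=d/6$; this yields $\|\psi\|_{L^3}^3\le C\|\psi\|_{L^2}^{3-d/2}\|\rho\|_{\dot{H}^1}^{d/2}$. The decisive point is that for $d=2,3$ the power $d/2$ on $\|\rho\|_{\dot{H}^1}$ is strictly below $2$, so Young's inequality absorbs this term into $-\|\rho\|_{\dot{H}^1}^2$ at the cost of a pure power of the $L^2$ norm, the Young dual exponent being exactly $\tfrac{12-2d}{4-d}$:
\begin{equation*}
\frac12\int_{\mathbb{T}^d}\psi^3\,dx\le\frac12\|\rho\|_{\dot{H}^1}^2+C\|\psi\|_{L^2}^{\frac{12-2d}{4-d}}.
\end{equation*}
(This is exactly where $d<4$ enters: at $d=4$ the power equals $2$ and no room is left, matching Remark~\ref{dim}.) Substituting gives the master inequality
\begin{equation*}
\frac12\partial_t\|\psi\|_{L^2}^2\le-\frac12\|\rho\|_{\dot{H}^1}^2+C\|\psi\|_{L^2}^{\frac{12-2d}{4-d}}+\bar{\rho}\,\|\psi\|_{L^2}^2.
\end{equation*}
The decay statement now follows: at a time where $\|\psi\|_{L^2}=B$, choosing $C_0=2C$ makes the right-hand side at most $-\tfrac12\|\rho\|_{\dot{H}^1}^2+\tfrac12 B_1^2$, so the hypothesis $\|\rho\|_{\dot{H}^1}^2\ge B_1^2$ of \eqref{19th} forces $\partial_t\|\rho\|_{L^2}^2=\partial_t\|\psi\|_{L^2}^2<0$ (taking $C_0$ marginally larger disposes of the boundary case).

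For the short-time bound \eqref{18th} I would discard the nonpositive dissipation term and study $V(\tau)\defeq\|\psi(\cdot,t+\tau)\|_{L^2}^2$, which obeys $\partial_\tau V\le C\bigl(V^{p}+\bar{\rho}\,V\bigr)$ with $p=\tfrac{6-d}{4-d}>1$. This is a superlinear scalar ODE, and I would run a continuity/bootstrap argument: on the maximal interval where $V\le 4B^2$ the right-hand side is bounded by $M=C(4^pB^{2p}+4\bar{\rho}B^2)$, hence $V(\tau)\le B^2+M\tau$, so $V$ cannot reach $4B^2$ before time $3B^2/M$. Since $2p-2=\tfrac{4}{4-d}$, one computes $3B^2/M\ge c\,(B^{\frac{4}{4-d}}+\bar{\rho})^{-1}$, and the elementary bound $(X+Y)^{-1}\ge\tfrac12\min(X^{-1},Y^{-1})$ shows this dominates $C_1\min(\bar{\rho}^{-1},B^{-\frac{4}{4-d}})$, and a fortiori $C_1\min(1,\bar{\rho}^{-1},B^{-\frac{4}{4-d}})$; the extra factor $1$ in the minimum only shrinks the guaranteed interval and is therefore harmless.

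The main obstacle is not a single deep step but getting the interpolation bookkeeping exactly right: one must check that the Gagliardo--Nirenberg exponent for $\|\psi\|_{L^3}$ is $d/6$ and that, after Young's inequality, the leftover power of $\|\psi\|_{L^2}$ is precisely $\tfrac{12-2d}{4-d}$, so that it matches the definition of $B_1$ in \eqref{19th} and the critical time scale $B^{-\frac{4}{4-d}}$ of \eqref{18th} and of the continuation criterion \eqref{13th}. The restriction $d<4$ is exactly the statement that the cubic term is subcritical relative to the dissipation, and this is the one place where genuine care is required.
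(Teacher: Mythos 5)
Your proof is correct and follows essentially the same route as the paper: the same energy identity reducing the chemotactic term to $\tfrac12\int_{\mathbb{T}^d}\rho^2(\rho-\bar{\rho})\,dx=\tfrac12\int_{\mathbb{T}^d}(\rho-\bar{\rho})^3\,dx+\bar{\rho}\|\rho-\bar{\rho}\|_{L^2}^2$, the same Gagliardo--Nirenberg plus Young absorption into the dissipation with the exponent $\tfrac{12-2d}{4-d}$, and the same resulting differential inequality. The only cosmetic difference is in deducing \eqref{18th}: the paper integrates the comparison ODE explicitly, whereas you use a continuity/bootstrap bound on the interval where $V\leq 4B^2$; both yield the same universal time scale $C_1\min(1,\bar{\rho}^{-1},B^{-\frac{4}{4-d}})$.
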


\begin{remark}
1. In particular, due to Theorem \ref{L2cr}, it follows that if $\|\rho(\cdot,t)-\bar{\rho}\|_{L^2}\leq B$, then the local smooth solution persists at least till $t+C_1\min\left(1,\bar{\rho}^{-1},B^{-\frac{4}{4-d}}\right)$.\\
2. Here and below, by universal constant we mean a constant that does not depend on any parameters of the problem: namely, a constant independent of $u,$ $\rho_0,$ and $B.$
\end{remark}
\begin{proof}
Let us multiply both sides of (\ref{chemo1}) by $\rho-\bar{\rho}$ and integrate. Then
\begin{equation}\label{L2}
\begin{split}
\frac12 \partial_t\|\rho-\bar{\rho}\|_{L^2}^2&=-\|\rho\|_{\dot{H}^1}^2+\int_{\mathbb{T}^d} \nabla\cdot(\rho\nabla(-\Delta)^{-1}(\rho-\bar{\rho}))(\rho-\bar{\rho})\, dx.
\end{split}
\end{equation}
Observe that
$$
\int_{\mathbb{T}^d}\rho\nabla(-\Delta)^{-1}(\rho-\bar{\rho})\nabla \rho \, dx=\int_{\mathbb{T}^d}\rho^2(\rho-\bar{\rho})\, dx-\int_{\mathbb{T}^d}\nabla \rho\cdot\nabla (-\Delta)^{-1}(\rho-\bar{\rho})\, dx.
$$
Therefore,
$$
\int_{\mathbb{T}^d}\rho\nabla(-\Delta)^{-1}(\rho-\bar{\rho})\nabla\rho \, dx=\frac{1}{2}\int_{\mathbb{T}^d}\rho^2(\rho-\bar{\rho})\, dx,
$$
and then the integral on the right hand side of (\ref{L2}) is equal to
$$
-\int_{\mathbb{T}^d}\nabla\cdot(\rho\nabla(-\Delta)^{-1}(\rho-\bar{\rho})(\rho-\bar{\rho}) )\, dx=\frac{1}{2}\int_{\mathbb{T}^d}\rho^2(\rho-\bar{\rho})\, dx,
$$
Next, notice that
$$
\int_{\mathbb{T}^d}\rho^2(\rho-\bar{\rho})\, dx=\int_{\mathbb{T}^d}(\rho-\bar{\rho})^3\, dx+2\bar{\rho}\int_{\mathbb{T}^d}(\rho-\bar{\rho})^2 \, dx-2\bar{\rho}^2.
$$
By a Gagliardo-Nirenberg inequality (see e.g. \cite{maz2013sobolev} or \cite{kiselev2012biomixing} for a simple proof), we have
$$
\|\rho-\bar{\rho}\|_{L^3}^3\leq C \|\rho-\bar{\rho}\|_{L^2}^{3-\frac{d}{2}}\|\rho\|_{\dot{H}^1}^{\frac{d}{2}}\leq \|\rho\|_{\dot{H}^1}^2+C_1\|\rho-\bar{\rho}\|_{L^2}^{\frac{12-2d}{4-d}},
$$
where in the second step we applied Young's inequality. Applying all these estimates to (\ref{L2}) yields
\begin{equation}\label{L2H1}
\partial_t\|\rho-\bar{\rho}\|_{L^2}^2\leq - \|\rho\|_{\dot{H}^1}^2+C_0\|\rho-\bar{\rho}\|_{L^2}^{\frac{12-2d}{4-d}}+2\bar{\rho}\|\rho-\bar{\rho}\|_{L^2}^2.
\end{equation}
Solving the differential equation
 $$
 f'(\tau)=Cf^{\frac{6-d}{4-d}}(\tau)+2\bar{\rho}f(\tau), \quad f(0)=B^2,
 $$
  leads to the solution
  \begin{equation}\label{22th}
  f(\tau)=\frac{B^2\exp (2\bar{\rho}\tau)}{\left(1-C\bar{\rho}^{-1}B^{\frac{4}{4-d}}(\exp(\frac{4\bar{\rho}\tau}{4-d})-1)\right)^{\frac{4-d}{2}}}.
  \end{equation}
  A standard comparison argument can be used to show that $\|\rho(\cdot, t+\tau)-\bar{\rho}\|_{L^2}^2\leq f(\tau)$. On the other hand, a straightforward estimate using (\ref{22th}) gives existence of a constant $C_1$ such that if $\tau\leq C_1 \min(1,\bar{\rho}^{-1},B^{-\frac{4}{4-d}})$, then $f(\tau)\leq 4B^2$.

  The second statement of the lemma follows directly from (\ref{L2H1}) and an assumption $\rho_0\geq 0$ (which implies $\bar \rho \geq 0$).

\end{proof}

\section{An approximation lemma}
We can now outline our general strategy of the proof of chemotactic blow up suppression in more detail. We know that the control of $L^2$ norm in spatial coordinates is sufficient for global regularity. We also see that if $\dot{H}^1$ norm of the solution is large, then $L^2$ norm is not growing. On the other hand, flows with strong mixing properties tend to increase $\dot{H}^1$ norm of solution. Hence our plan will be to deploy such flows, at a sufficiently strong intensity, to make sure that the $\dot{H}^1$ norm of the solution stays high, at least whenever the $L^2$ norm is not small. The first hurdle we face, however, is to show that the mixing property of flow persists in the full nonlinear Keller-Segel equation.

In this section we prove a key result on the approximation of solutions to the Keller-Segel equation with advection (\ref{chemo1}) by solutions of the pure advection equation. We will be looking at the intense advection regime, and consider small, relative to all parameters except the strength of advection, time intervals. It is natural to assume that in this case most of the dynamics we observe is due to advection, though the exact statement of the result requires care since both diffusion and chemotactic terms are not trivial perturbations.

 Let us consider the equation (\ref{chemo1})
 $$
 \partial_t\rho+(u\cdot\nabla)\rho-\Delta \rho+\nabla \cdot (\rho\nabla(-\Delta)^{-1}(\rho-\bar{\rho}))=0,\quad \rho(x,0)=\rho_0(x),
 $$
$x\in \mathbb{T}^d$, with $d=2,3$. We will assume that the vector field $u$ is divergence free and Lipschitz in spatial variables. It may be stationary or time dependent. Let us denote $\eta(x,t)$ the unique
Lipschitz solution of the equation
\begin{equation}\label{23th}
\partial_t\eta +(u\cdot\nabla)\eta=0, \quad \eta(x,0)=\rho_0(x).
\end{equation}
If we define the trajectories map by
\begin{equation}\label{24th}
\frac{d}{dt}\Phi_t(x)=u(\Phi_t(x),t),\quad \Phi_0(x)=x,
\end{equation}
then $\eta(x,t)=\rho_0(\Phi_t^{-1}(x))$.

We start with the following simple observation. Denote the Lipschitz semi-norm
$$
\|f\|_{Lip}=\sup_{x,y}\frac{|f(x)-f(y)|}{|x-y|}.
$$

\begin{lem}\label{51}
Suppose that the vector field $u$ is incompressible and Lipschitz in spatial variable for each $t\geq 0$, $\|u(\cdot,t)\|_{Lip(\mathbb{T}^d)}\leq D(t)$, $D(t)\in L^1_{loc}[0,\infty)$. Let $\eta(x,t)$ be the solution of (\ref{23th}). Then for every $t\geq 0$, and for every $\rho_0\in \dot{H}^1$, we have
\begin{equation}\label{25th}
\|\eta(\cdot, t)\|_{\dot{H}^1}\leq F(t)\|\rho_0\|_{\dot{H}^1}, \quad \mbox{where } F(t)=\exp\left(C\int_0^tD(s)ds\right).
\end{equation}
\end{lem}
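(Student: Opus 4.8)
Since the Fourier definition gives $\|\eta(\cdot,t)\|_{\dot H^1}^2 = \sum_{k\ne 0}|k|^2|\hat\eta(k)|^2 = \|\nabla\eta(\cdot,t)\|_{L^2}^2$, the claim (\ref{25th}) is equivalent to the gradient bound $\|\nabla\eta(\cdot,t)\|_{L^2}\le F(t)\|\nabla\rho_0\|_{L^2}$. My plan is a direct energy estimate for $\nabla\eta$. Pretending for the moment that $u$ is smooth, I would differentiate (\ref{23th}) in the variable $x_j$ to obtain
\[
\partial_t(\partial_j\eta) + (u\cdot\nabla)(\partial_j\eta) = -\sum_i (\partial_j u_i)(\partial_i\eta).
\]
Multiplying by $\partial_j\eta$, integrating over $\mathbb{T}^d$, and summing over $j$, the transport contribution drops out by incompressibility: $\int_{\mathbb{T}^d}(\partial_j\eta)\,(u\cdot\nabla)(\partial_j\eta)\,dx = \tfrac12\int_{\mathbb{T}^d} u\cdot\nabla(\partial_j\eta)^2\,dx = -\tfrac12\int_{\mathbb{T}^d}(\nabla\cdot u)(\partial_j\eta)^2\,dx = 0$.

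What remains is the estimate
\[
\tfrac12\tfrac{d}{dt}\|\nabla\eta\|_{L^2}^2 = -\sum_{i,j}\int_{\mathbb{T}^d}(\partial_j\eta)(\partial_j u_i)(\partial_i\eta)\,dx \le \|\nabla u\|_{L^\infty}\|\nabla\eta\|_{L^2}^2.
\]
Since the scalar Lipschitz seminorm coincides with the sup-norm of the gradient, $\|\nabla u(\cdot,t)\|_{L^\infty}\le C\|u(\cdot,t)\|_{Lip}\le C D(t)$, so Gr\"onwall's inequality yields $\|\nabla\eta(\cdot,t)\|_{L^2}^2\le \|\nabla\rho_0\|_{L^2}^2\exp\!\big(2C\int_0^t D(s)\,ds\big)$, and taking square roots gives exactly (\ref{25th}).

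The main obstacle is that $u$ is only Lipschitz, so $\nabla u$ exists merely almost everywhere and the formal differentiation above is not a priori justified. I would resolve this by mollification: replace $u$ by a spatial mollification $u_\eps$, which is smooth, divergence free, and satisfies $\|u_\eps(\cdot,t)\|_{Lip}\le \|u(\cdot,t)\|_{Lip}\le D(t)$ since mollification does not increase the Lipschitz seminorm, and likewise mollify $\rho_0$. The energy computation then applies verbatim to the smooth solutions $\eta_\eps$, with a bound $F(t)$ that is uniform in $\eps$ because it depends on $u_\eps$ only through $D(t)$. Standard stability for the linear transport equation, through convergence of the flow maps $\Phi_t^\eps\to\Phi_t$, gives $\eta_\eps\to\eta$, and lower semicontinuity of the $\dot H^1$ norm under this convergence preserves (\ref{25th}) in the limit.

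As an alternative that sidesteps differentiating the equation, one can argue directly from $\eta(\cdot,t)=\rho_0(\Phi_t^{-1})$ by change of variables. Writing $\nabla\eta(x,t)=(D\Phi_t^{-1}(x))^T(\nabla\rho_0)(\Phi_t^{-1}(x))$ and using that the flow is volume preserving,
\[
\|\nabla\eta(\cdot,t)\|_{L^2}^2 = \int_{\mathbb{T}^d}\big|(D\Phi_t(y))^{-T}\nabla\rho_0(y)\big|^2\,dy \le \Big(\sup_y\|(D\Phi_t(y))^{-1}\|\Big)^2\|\nabla\rho_0\|_{L^2}^2.
\]
The Jacobian solves the variational equation $\tfrac{d}{dt}D\Phi_t = (Du)(\Phi_t,t)\,D\Phi_t$, whence both $\|D\Phi_t\|$ and $\|(D\Phi_t)^{-1}\|$ are bounded by $\exp\!\big(\int_0^t\|Du\|_{L^\infty}\,ds\big)\le F(t)$, again giving (\ref{25th}); the same mollification makes this route rigorous for merely Lipschitz $u$.
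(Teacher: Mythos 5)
Your proposal is correct, and your two routes bracket the paper's argument in an interesting way. The paper's own proof is essentially your \emph{alternative} route: it writes $\eta(x,t)=\rho_0(\Phi_t^{-1}(x))$, invokes the standard Gr\"onwall-type bound $\|\Phi_t^{-1}\|_{Lip}\leq \exp\left(C\int_0^t D(s)\,ds\right)$ for flows of Lipschitz incompressible fields (citing \cite{marchioro2012mathematical}), and then appeals to the known behavior of $\dot H^1$ functions under volume-preserving bi-Lipschitz changes of coordinates (citing \cite{ziemer1989weakly}); this works directly at Lipschitz regularity with no mollification, at the price of citing two external facts. Your \emph{primary} route --- differentiating the transport equation (\ref{23th}), killing the advection term by incompressibility, and closing with Gr\"onwall --- is genuinely different and more self-contained, but it is only formal for Lipschitz $u$ and therefore needs exactly the regularization scaffolding you describe: mollify $u$ and $\rho_0$ (noting that mollification preserves the divergence-free condition and does not increase the Lipschitz seminorm, so the resulting $F(t)$ is uniform in $\varepsilon$), pass to the limit via convergence of the flow maps, and recover (\ref{25th}) by lower semicontinuity of the $\dot H^1$ norm. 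Both arguments yield the same exponential dependence on $\int_0^t D(s)\,ds$; the paper's is shorter because it outsources the technical content, while yours makes the mechanism (the cancellation of the transport term in the energy identity) explicit. One small point worth keeping in mind if you write the limiting step out in full: you should identify the limit of the regularized solutions with the solution $\eta=\rho_0\circ\Phi_t^{-1}$ that the lemma refers to, which follows from the uniform convergence $\Phi_t^{\varepsilon}\to\Phi_t$ of the (uniformly Lipschitz) flow maps together with $L^2$ continuity of composition with volume-preserving maps.
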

\begin{proof}
If $u$ is incompressible and Lipschitz in spatial variable for each time, then the trajectories map $\Phi_t(x)$ is area preserving, Lipschitz in $x$ and invertible for each $t$, and the inverse map $\Phi_t^{-1}(x)$ is also Lipschitz in spatial variables. Moreover, $\|\Phi_t^{-1}\|_{Lip}\leq \exp(C\int_0^t D(s)ds)$ (see e.g. \cite{marchioro2012mathematical}). The evolution $\eta(x,t)=\rho_0(\Phi_t^{-1})$ is a Lipschitz regular coordinate change of an $\dot{H}^1$ function $\rho_0$. The bound (\ref{25th}) follows from the well known properties of $\dot{H}^1$ functions under Lipschitz transformations of coordinates \cite{ziemer1989weakly}.
\end{proof}

We are now ready to prove the approximation lemma.
\begin{lem}\label{5.2}
Suppose that the vector field $u(x,t)$ is incompressible and Lipschitz in $x$, and is such that (\ref{25th}) is satisfied with $F(t)\in L^{\infty}_{loc}[0,\infty)$. Let $\rho(x,t)$, $\eta(x,t)$ be the solutions of (\ref{chemo1}), (\ref{23th}) respectively with $\rho_0\geq 0\in \dot{H}^1$. Suppose that the unique local smooth solution $\rho(x,t)$ exists for $t\in [0,T]$. Then for every $t\in[0,T]$ we have
\begin{equation}\label{26th}
\frac{d}{dt}\|\rho-\eta\|_{L^2}^2\leq -\|\rho\|_{\dot{H}^1}^2+4\|\rho_0\|_{\dot{H}^1}^2F(t)^2+C\|\rho-\bar{\rho}\|_{L^2}^2\left(\|\rho-\bar{\rho}\|_{L^2}^{\frac{12}{6-d}}+\bar{\rho}^2\right).
\end{equation}
\end{lem}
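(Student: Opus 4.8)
The plan is to set $w=\rho-\eta$ and track $\|w\|_{L^2}^2$ directly. Since $\frac{d}{dt}\|w\|_{L^2}^2=2\int_{\mathbb{T}^d}w\,\partial_t w\,dx$ and $\partial_t w=-(u\cdot\nabla)w+\Delta\rho-\nabla\cdot(\rho\nabla(-\Delta)^{-1}(\rho-\bar{\rho}))$, I would treat the three resulting contributions separately. The transport term $-2\int w\,(u\cdot\nabla)w\,dx=-\int (u\cdot\nabla)w^2\,dx$ vanishes by incompressibility, exactly as in the derivation of (\ref{L2}). The diffusion term gives $2\int w\,\Delta\rho\,dx=-2\|\rho\|_{\dot{H}^1}^2+2\int\nabla\eta\cdot\nabla\rho\,dx$, isolating the clean dissipation $-2\|\rho\|_{\dot{H}^1}^2$ together with a cross term to be absorbed later.

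For the nonlinear term I would integrate by parts once to move the divergence onto $\nabla w=\nabla\rho-\nabla\eta$, obtaining $2\int\rho\nabla\rho\cdot\nabla(-\Delta)^{-1}(\rho-\bar{\rho})\,dx-2\int\rho\nabla\eta\cdot\nabla(-\Delta)^{-1}(\rho-\bar{\rho})\,dx$. Using $\rho\nabla\rho=\tfrac12\nabla\rho^2$ and $-\Delta(-\Delta)^{-1}(\rho-\bar{\rho})=\rho-\bar{\rho}$, the first piece collapses to $\int\rho^2(\rho-\bar{\rho})\,dx$ — precisely the cubic term already analyzed in the proof of Proposition \ref{L2decay}, so I would reuse that estimate (Gagliardo--Nirenberg for $\|\rho-\bar{\rho}\|_{L^3}^3$, then Young). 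The second piece, call it $I$, is genuinely new: it is the only place $\eta$ enters the nonlinearity and has no analogue in the pure Keller--Segel energy estimate.

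To control the cross terms, the key input is Lemma \ref{51}, namely $\|\eta\|_{\dot{H}^1}\le F(t)\|\rho_0\|_{\dot{H}^1}$. For the diffusion cross term I would use Cauchy--Schwarz and Young, $2\int\nabla\eta\cdot\nabla\rho\,dx\le\tfrac12\|\rho\|_{\dot{H}^1}^2+2\|\eta\|_{\dot{H}^1}^2$. For $I$ I would pull out $\|\nabla\eta\|_{L^2}=\|\eta\|_{\dot{H}^1}$ by Cauchy--Schwarz and then estimate $\|\rho\nabla(-\Delta)^{-1}(\rho-\bar{\rho})\|_{L^2}$ by Hölder with exponent pair $(3,6)$: the factor $\|\rho-\bar{\rho}\|_{L^3}$ is handled by (\ref{14th}), while $\|\nabla(-\Delta)^{-1}(\rho-\bar{\rho})\|_{L^6}\le C\|\rho-\bar{\rho}\|_{L^2}$ because $\nabla(-\Delta)^{-1}$ maps $L^2$ into $H^1$ (the second derivatives are Riesz transforms, bounded on $L^2$) and $H^1(\mathbb{T}^d)\hookrightarrow L^6$ for $d\le 3$. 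Young's inequality then splits $I$ into a multiple of $\|\eta\|_{\dot{H}^1}^2$, a small multiple of $\|\rho\|_{\dot{H}^1}^2$ (absorbed into the retained dissipation), and a term $C\|\rho-\bar{\rho}\|_{L^2}^{2+\frac{12}{6-d}}$; writing $\rho=(\rho-\bar{\rho})+\bar{\rho}$ produces the $\bar{\rho}^2\|\rho-\bar{\rho}\|_{L^2}^2$ contribution from the mean. Summing both cross terms, the $\dot{H}^1$-norms of $\eta$ combine into $4\|\rho_0\|_{\dot{H}^1}^2F(t)^2$, the dissipation starting at $-2\|\rho\|_{\dot{H}^1}^2$ retains coefficient $-1$ after all absorptions, and a final application of the Poincaré inequality lets the remaining lower-order $\|\rho-\bar{\rho}\|_{L^2}^2$ pieces be swept into these terms, yielding (\ref{26th}).

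The hard part is the new term $I$. The difficulty is that $\eta$ is only transported by a Lipschitz field, so the sole norm of $\eta$ I am entitled to use is $\|\eta\|_{\dot{H}^1}$, controlled through Lemma \ref{51}; in particular I cannot integrate by parts to shift derivatives onto $\eta$. The estimate must therefore spend the single factor $\|\nabla\eta\|_{L^2}$ and extract everything else from $\rho$ alone, which forces the smoothing of $\nabla(-\Delta)^{-1}$ to be used at full strength so that the chemical gradient lands in $L^6$. It is precisely the embedding $H^1\hookrightarrow L^6$ that confines the argument to $d\le 3$, matching the dimensional limitation flagged in Remark \ref{dim}.
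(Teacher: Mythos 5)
Your argument is essentially the paper's proof: the paper likewise differentiates $\|\rho-\eta\|_{L^2}^2$, kills the transport term by incompressibility, extracts the dissipation together with the $\|\rho\|_{\dot{H}^1}\|\eta\|_{\dot{H}^1}$ cross term, controls $\|\rho\nabla(-\Delta)^{-1}(\rho-\bar{\rho})\|_{L^2}$ by exactly your H\"older $(3,6)$ plus Gagliardo--Nirenberg estimate, and then invokes Lemma \ref{51} and Young. The only place you deviate is the ``diagonal'' part of the nonlinearity, where you use the symmetrization identity $\int\rho\nabla\rho\cdot\nabla(-\Delta)^{-1}(\rho-\bar{\rho})\,dx=\tfrac12\int\rho^2(\rho-\bar{\rho})\,dx$ and recycle the cubic estimate of Proposition \ref{L2decay}, whereas the paper simply estimates that term the same way as the $\eta$ term and absorbs $\tfrac14\|\rho\|_{\dot{H}^1}^2$ into the dissipation; your route is equally valid but yields the lower-order terms $C\|\rho-\bar{\rho}\|_{L^2}^{\frac{12-2d}{4-d}}+2\bar{\rho}\|\rho-\bar{\rho}\|_{L^2}^2$ rather than the corresponding part of \eqref{26th}, which for $d=2$ in the small-norm regime is not literally dominated by the stated right-hand side (and the closing suggestion to sweep leftover $\|\rho-\bar{\rho}\|_{L^2}^2$ pieces into the dissipation via Poincar\'e requires the absorbed constant not to exceed the remaining dissipation coefficient, which is not automatic). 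This is harmless --- your variant of the inequality suffices for every application of the lemma in the paper --- but to obtain \eqref{26th} verbatim you should treat the diagonal term exactly as you treated the $\eta$ term, without the symmetrization.
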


Of course a direct analog of lemma holds without assumption $\rho_0\geq 0$; we only need to replace $\bar{\rho}^2$ in (\ref{26th}) with $\|\rho\|^2_{L^1}$.

\begin{proof}
A direct computation using divergence free property of $u$ shows that
\begin{equation}\label{27th}
\begin{split}
\frac12 \frac{d}{dt}\|\rho-\eta\|_{L^2}^2&= \int_{\mathbb{T}^d}\Delta \rho(\rho-\eta)\, dx-\int_{\mathbb{T}^d}\nabla\cdot(\rho\nabla(-\Delta)^{-1}(\rho-\bar{\rho}))(\rho-\eta)\, dx\\
&\leq -\|\rho\|^2_{\dot{H}^1}+\|\rho\|_{\dot{H}^1}\|\eta\|_{\dot{H}^1}+\|\rho\nabla(-\Delta)^{-1}(\rho-\bar{\rho})\|_{L^2}\|\rho\|_{\dot{H}^1}\\
&+\|\rho\nabla(-\Delta)^{-1}(\rho-\bar{\rho})\|_{L^2}\|\eta\|_{\dot{H}^1}.
\end{split}
\end{equation}
Applying H\"{o}lder and Gagliardo-Nirenberg inequalities, we can estimate
\begin{align*}
\|\rho\nabla(-\Delta)^{-1}(\rho-\bar{\rho})\|_{L^2}&\leq \|\rho\|_{L^3}\|\nabla(-\Delta)^{-1}(\rho-\bar{\rho})\|_{L^6}\\
&\leq C\left(\|\rho\|_{\dot{H}^1}^{\frac{d}{6}}\|\rho-\bar{\rho}\|_{L^2}^{1-\frac{d}{6}}+\bar{\rho}\right)\|\nabla(-\Delta)^{-1}(\rho-\bar{\rho})\|_{\dot{H}^1}^{\frac{d}{3}}\|\nabla(-\Delta)^{-1}(\rho-\bar{\rho})\|_{L^2}^{1-\frac{d}{3}}\\
&\leq C\left(\|\rho\|_{\dot{H}^1}^{\frac{d}{6}}\|\rho-\bar{\rho}\|_{L^2}^{1-\frac{d}{6}}+\bar{\rho}\right)\|\rho-\bar{\rho}\|_{L^2}.
\end{align*}
Here the last step follows from simple estimates on Fourier side. Given these estimates, several applications of Young's inequality show that the right hand side of (\ref{27th}) can be bounded above by
\begin{align*}
&-\|\rho\|_{\dot{H}^1}^2+\frac{1}{4}\|\rho\|_{\dot{H}^1}^2+\|\eta\|_{\dot{H}^1}^2+\frac{1}{8}\|\rho\|_{\dot{H}^1}^2+C\|\rho-\bar{\rho}\|_{L^2}^{2+\frac{12}{6-d}}\\
&+\frac{1}{8}\|\rho\|_{\dot{H}^1}^2+C\|\rho-\bar{\rho}\|_{L^2}^2\bar{\rho}^2+\|\eta\|_{\dot{H}^1}^2.
\end{align*}
With help of Lemma \ref{51}  the estimate (\ref{26th}) quickly follows.
\end{proof}

\section{Proof of the main theorem: the relaxation enhancing flows}

Our first example of flows that can stop chemotactic explosion will be relaxation enhancing flows of \cite{constantin2008diffusion}. These stationary in time flows have been shown to be very efficient in speeding up convergence to the mean of the solutions of diffusion-advection equation. A particular type of the RE flows are weakly mixing flows, a well known class in dynamical systems theory which is intermediate in mixing properties between mixing and ergodic \cite{cornfeld2012ergodic}. Let us briefly review the relevant definitions.

Given an incompressible vector field $u(x)$ which is Lipschitz in spatial variables, recall definition (\ref{24th}) for the trajectories map $\Phi_t(x)$. Then define a unitary operator $U^t f(x)=f(\Phi_t^{-1}(x))$ acting on $L^2(\mathbb{T}^d)$.

\begin{definition}
The flow $u(x)$ is called weakly mixing if the spectrum of the operator $U\equiv U^1$ is purely continuous.

The flow $u(x)$ is called relaxation enhancing (RE) if the operator $U$ (or properly defined operator ($u\cdot \nabla$)) has no eigenfunctions in $\dot{H}^1$ other than a constant function.
\end{definition}
\begin{remark*}
The fact that we talk about the spectrum of $U$ rather than $(u\cdot \nabla)$ is a minor technical point. The symmetric operator $i(u\cdot\nabla)$ is unbounded on $L^2$, and sometimes needs to be extended to its natural domain (rather than just $\dot{H}^1$) to become self-adjoint and to be a true generator for $U$. To avoid these technicalities, it is convenient to make the definition in terms of $U$, which is bounded and so can be defined on smooth functions and then extended to the entire $L^2$ by continuity.

Examples of weakly mixing flows on $\mathbb{T}^d$ are classical and go back to von Neumann \cite{neumann1932operatorenmethode} (just continuous $u(x)$) and Kolmogorov \cite{kolmogorov1953dynamical} (smooth $u(x)$). The Kolmogorov construction is based on the irrational rotation on the torus with appropriately selected invariant measure. Lack of eigenfunctions is established by analysis of a small denominator problem on the Fourier side bearing some similarity to the core of the KAM theory. The original examples are not incompressible with respect to the Lebesgue measure on $\mathbb{T}^d$, but a smooth change of coordinates can be applied to obtain incompressible flows with the same properties. Weakly mixing flows are also RE, but there are smooth RE flows which are not weakly mixing: these do have eigenfunctions but rough ones, lying in $L^2$ but not in $\dot{H}^1$ (see \cite{constantin2008diffusion} for more details and examples).
\end{remark*}
We now state our first main theorem. Consider the equation
\begin{equation}\label{28th}
\partial_t \rho^A+A(u\cdot \nabla)\rho^A -\Delta \rho^A +\nabla\cdot(\rho^A\nabla(-\Delta)^{-1}(\rho-\bar{\rho}))=0,\quad \rho^A(x,0)=\rho_0(x).
\end{equation}
Here $A$ is the coupling constant regulating strength of the fluid flow that we will assume to be large. We note that dividing the equation by $A$ and changing time, we can instead think of all the results below as applicable in the regime of weak diffusion and chemotaxis on long time scales.
\begin{thm}\label{6.2}
Suppose that $u$ is smooth and incompressible vector field on $\mathbb{T}^d$, $d=2,3$, which is also relaxation enhancing. Assume that $\rho\geq 0 \in C^{\infty}(\mathbb{T}^d)$. Then there exists an amplitude $A_0$ which depends only on $\rho_0$ and $u$ such that for every $A\geq A_0$ the solution $\rho^A(x,t)$ of the equation (\ref{28th}) is globally regular.
\end{thm}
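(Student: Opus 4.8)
The plan is to combine the three ingredients already established: the $L^2$-regularity criterion (Theorem~\ref{L2cr}), the fact that a large $\dot{H}^1$ norm forces the $L^2$ norm to decay (Proposition~\ref{L2decay}), and the closeness of $\rho^A$ to the passively advected scalar (Lemma~\ref{5.2}). By Theorem~\ref{L2cr} it suffices to keep $\|\rho^A(\cdot,t)-\bar{\rho}\|_{L^2}$ bounded for all time, so I would run a continuity (bootstrap) argument with a fixed ceiling $M$, chosen larger than $\|\rho_0-\bar{\rho}\|_{L^2}$, and show that for every $A$ beyond some $A_0=A_0(\rho_0,u)$ the quantity $B(t):=\|\rho^A(\cdot,t)-\bar{\rho}\|_{L^2}$ can never climb above $M$. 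The growth of $B$ inside a short interval is controlled for free by the first part of Proposition~\ref{L2decay}, and small values of $B$ pose no danger, so the real task is to show that whenever $B(t_0)$ is of order $M$ the flow drives the $\dot{H}^1$ norm above the threshold $B_1$ of \eqref{19th} within a short, $A$-dependent time, after which the second part of Proposition~\ref{L2decay} gives $\partial_t\|\rho^A-\bar{\rho}\|_{L^2}<0$.

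The core is a dichotomy on an interval $[t_0,t_0+\tau]$ with $\tau=T_c/A$, where $T_c$ is a mixing time to be fixed. If $\|\rho^A(\cdot,t_0)\|_{\dot{H}^1}\ge B_1$ we are done; otherwise $\|\rho^A(\cdot,t_0)\|_{\dot{H}^1}$ is bounded in terms of $M$, and I claim $\|\rho^A(\cdot,t)\|_{\dot{H}^1}$ must still exceed $B_1$ at some $t\in[t_0,t_0+\tau]$. Suppose not. Restarting Lemma~\ref{5.2} at $t_0$ with $\eta$ the solution of the advection problem \eqref{23th} for the amplified field $Au$ and data $\rho^A(\cdot,t_0)$, the error term $4\|\rho^A(\cdot,t_0)\|_{\dot{H}^1}^2\int F^2$ and the nonlinear term are both $O(A^{-1})$ once $A\tau=T_c$ is fixed: here $F$ grows like $\exp(CA\|u\|_{Lip}(t-t_0))$, but the factor $A^{-1}$ coming from the short interval wins, so $\|\rho^A-\eta\|_{L^2}\to0$ uniformly on the interval as $A\to\infty$. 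On the other hand, the standing assumption $\|\rho^A\|_{\dot{H}^1}\le B_1$ forces the low-frequency part $P_N(\rho^A-\bar{\rho})$, with $N$ a fixed large integer depending on the band of $B$ around $M$, to carry almost all of the mass $B$, while the mixing of $u$ makes $\|P_N(\eta-\bar{\rho})\|_{L^2}$ small on a subset of $[t_0,t_0+\tau]$ of nearly full measure. Choosing any $t$ in that subset, these two facts are incompatible with $\|\rho^A(\cdot,t)-\eta(\cdot,t)\|_{L^2}$ being small, a contradiction. Hence $\|\rho^A\|_{\dot{H}^1}$ crosses $B_1$ and $B$ begins to decrease. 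Feeding this into the bootstrap closes the argument: each time $B$ reaches the band near $M$ it is pushed down within time $\tau$, while Proposition~\ref{L2decay} caps its growth in between, so $B(t)\le M$ for all $t$ and Theorem~\ref{L2cr} yields global regularity for every $A\ge A_0$.

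The main obstacle, and the only place the relaxation-enhancing hypothesis really enters, is the low-frequency decay of the advected scalar, which I would deduce from the RAGE theorem applied to the Koopman operator $U^t$ exactly as in \cite{constantin2008diffusion}. Two points need care. First, the estimate must be uniform over the non-compact family of admissible data $\rho^A(\cdot,t_0)$; this is recovered by the same compactness-and-contradiction scheme used in \cite{constantin2008diffusion} to upgrade RAGE to a uniform relaxation statement. Second, pure-advection RAGE damps the low frequencies only when $U^t$ has purely continuous spectrum, that is, for weakly mixing flows; a general relaxation-enhancing flow may carry rough $L^2$ eigenfunctions lying outside $\dot{H}^1$, whose low-frequency content is not destroyed by advection alone. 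I would therefore present the clean argument above for weakly mixing flows, and for the general RE case replace the pure-advection comparison by the advection--diffusion relaxation of \cite{constantin2008diffusion}, whose dissipation does damp those rough eigenfunctions; the $\dot{H}^1$ control of $\rho^A$ needed to absorb the nonlinear term in that comparison is itself furnished by Lemma~\ref{5.2} precisely in the regime $\|\rho^A(\cdot,t_0)\|_{\dot{H}^1}<B_1$, where the right-hand side of \eqref{26th} is $O(A^{-1})$.
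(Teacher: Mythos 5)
Your overall architecture is the same as the paper's: run the argument on intervals $[t_0,t_0+\tau]$ with $\tau=T_c/A$ starting each time the $L^2$ norm hits a fixed ceiling, use Proposition~\ref{L2decay} to keep the norm $\leq 2B$ and the solution smooth on such an interval, use Lemma~\ref{5.2} with $\int_0^\tau F(At)^2\,dt=A^{-1}\int_0^{T_c}F^2$ to make $\rho^A$ track $\eta^A$, and feed in the uniform RAGE statement (Lemma~\ref{6.3}, with the compact set $K$ of normalized data of bounded $\dot H^1$ norm) to kill the low modes of $\eta^A$. Your observation that $\|\rho^A(\cdot,t_0)\|_{\dot H^1}<B_1$ at the first crossing time, and your frequency-splitting contradiction (low modes must carry mass $\approx B$ if $\dot H^1\leq B_1$, but mixing plus closeness to $\eta^A$ makes $P_N\rho^A$ small) are exactly the paper's mechanism. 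The treatment of general RE versus weakly mixing flows also matches the paper, which likewise only writes out the weakly mixing case.

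The gap is in the last step. Your dichotomy delivers only that $\|\rho^A(\cdot,t)\|_{\dot H^1}$ exceeds $B_1$ at \emph{some} instant $t\in[t_0,t_0+\tau]$, and from this you conclude ``$B$ begins to decrease'' and ``each time $B$ reaches the band near $M$ it is pushed down within time $\tau$.'' A single instant where $\partial_t\|\rho^A-\bar\rho\|_{L^2}<0$ gives no net decrease over the interval: the $\dot H^1$ norm can immediately fall back below $B_1$, the $L^2$ norm can have grown throughout $[t_0,t)$ and can resume growing after $t$, and on successive intervals the ceiling can ratchet upward; the bootstrap does not close. What is actually needed, and what the paper proves, is a \emph{quantitative integrated} statement: from \eqref{32th}, \eqref{35th} and the closeness to $\eta^A$ one gets the time-averaged bound $\frac{1}{\tau}\int_0^{\tau}\|\rho^A(\cdot,t_0+t)\|_{\dot H^1}^2\,dt\geq \frac12\lambda_N B^2$ over the \emph{whole} interval (estimate \eqref{36th}), and integrating the differential inequality \eqref{L2H1} over $[t_0,t_0+\tau]$ with $\lambda_N$ chosen as in \eqref{30th} yields $\|\rho^A(\cdot,t_0+\tau)-\bar\rho\|_{L^2}^2\leq(1-\frac14\tau\lambda_N)B^2\leq B^2$. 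It is this return to (at or below) the level $B$ at the endpoint of each interval that permits the iteration. Your frequency-splitting argument in fact contains the ingredients to prove the averaged bound --- you only exploit it at one time instead of in time average --- so the fix is to keep the Chebyshev/averaged form of the RAGE estimate and then integrate the $L^2$ inequality, rather than invoking the sign of $\partial_t\|\rho^A-\bar\rho\|_{L^2}$ at a single time.
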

We will only prove Theorem \ref{6.2} in the case of weakly mixing flows. This serves our main purpose of providing an example of chemotactic blow up-arresting flow. In the general RE case, the proof is a fairly straightforward extension of an argument for the weakly mixing flow and the point spectrum estimates in \cite{constantin2008diffusion} (namely, Lemma 3.3 and part of the proof of Theorem 1.4 dealing with point spectrum).

Before starting the proof, we need one auxiliary result from \cite{constantin2008diffusion}. Let $P_N$ be the orthogonal projection operator on the subspace formed by Fourier modes $|k|\leq N$:
$$
P_Nf(x)=\sum_{|k|\leq N}e^{2\pi i k x} \hat{f}(k).
$$
\begin{lem}\label{6.3}
Let $U$ be a unitary operator with purely continuous spectrum defined on $L^2(\mathbb{T}^d)$. Let $S=\{\phi\in L^2:\|\phi\|_{L^2}=1\}$, and let $K\subset S$ be a compact set. Then for every $N$ and every $\sigma>0$, there exists $T_c(N,\sigma,K,U)$ such that for all $T\geq T_c(N,\sigma,K,U)$ and every $\phi\in K$, we have
\begin{equation}\label{29th}
\frac{1}{T}\int_0^T\|P_N U^t \phi\|^2dt \leq \sigma.
\end{equation}
\end{lem}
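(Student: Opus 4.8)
The plan is to prove Lemma~\ref{6.3} by combining the spectral theorem for the unitary operator $U$ with a compactness argument on $K$. The key point is that the hypothesis of purely continuous spectrum rules out nonzero spectral mass at any single point, and this translates into time-averaged decay of $\|P_N U^t \phi\|^2$ for each fixed $\phi$. The uniformity over the compact set $K$ and over the (finitely many) Fourier modes with $|k|\le N$ is then obtained by an $\eps/3$-type covering argument.

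\medskip

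First I would fix $\phi \in S$ and analyze the time average for a single Fourier mode. Writing $\psi = e^{2\pi i k x}$ for one such mode, we have $\langle U^t \phi, \psi\rangle = \int_{\Rm} e^{2\pi i \theta t}\, d\mu_{\phi,\psi}(\theta)$ by the spectral theorem, where $\mu_{\phi,\psi}$ is the (complex) spectral measure associated to $\phi$ and $\psi$. Then
\begin{equation*}
\frac{1}{T}\int_0^T |\langle U^t \phi, \psi\rangle|^2\, dt = \frac{1}{T}\int_0^T \int_{\Rm}\int_{\Rm} e^{2\pi i (\theta-\theta')t}\, d\mu_{\phi,\psi}(\theta)\, d\overline{\mu_{\phi,\psi}}(\theta')\, dt.
\end{equation*}
Interchanging the order of integration and using that $\frac{1}{T}\int_0^T e^{2\pi i (\theta-\theta')t}\,dt \to \Chi{\theta=\theta'}$ as $T\to\infty$ with modulus bounded by $1$, the dominated convergence theorem gives that the limit equals $\int\int \Chi{\theta=\theta'}\, d\mu_{\phi,\psi}\,d\overline{\mu_{\phi,\psi}}$. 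Because $U$ has purely continuous spectrum, the diagonal $\{\theta=\theta'\}$ carries no mass for the product of these continuous measures, so this limit is zero. Since there are only finitely many modes $k$ with $|k|\le N$, summing shows $\frac{1}{T}\int_0^T \|P_N U^t \phi\|^2\, dt \to 0$ for each fixed $\phi$.

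\medskip

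The main obstacle, and the step I would be most careful with, is upgrading this pointwise-in-$\phi$ convergence to the uniform bound over $K$ stated in \eqref{29th}. Here I would use compactness: the map $\phi \mapsto \frac{1}{T}\int_0^T \|P_N U^t\phi\|^2\,dt$ is Lipschitz on $S$ uniformly in $T$, since $P_N U^t$ is a contraction, so $\big|\|P_N U^t\phi\| - \|P_N U^t\phi'\|\big| \le \|\phi-\phi'\|_{L^2}$ and hence the time-averaged quantities differ by at most $C\|\phi - \phi'\|_{L^2}$. Given $\sigma>0$, cover $K$ by finitely many balls of radius $\sigma/(4C)$ centered at points $\phi_1,\dots,\phi_m \in K$; for each center the pointwise convergence furnishes a time $T_j$ beyond which its time average is below $\sigma/2$. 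Taking $T_c = \max_j T_j$ and combining with the Lipschitz estimate yields \eqref{29th} for every $\phi\in K$ and every $T\ge T_c$, which depends only on $N$, $\sigma$, $K$, and $U$ as claimed.
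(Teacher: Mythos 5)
Your proof is correct. The paper does not actually prove Lemma~\ref{6.3}; it defers to \cite{constantin2008diffusion}, where the argument is exactly the one you give: a RAGE-theorem-type statement for a single $\phi$ (which you re-derive from scratch via Wiener's averaging argument applied to the non-atomic spectral measures $\mu_{\phi,\psi}$ of the generator, using that $P_N$ is finite rank), upgraded to uniformity over $K$ by the Lipschitz-plus-finite-cover argument. The only point worth making explicit is the one the paper's own remark flags: ``purely continuous spectrum of $U=U^1$'' must be translated into non-atomicity of the spectral measure of the self-adjoint generator of the group $U^t$, which holds because an atom of the generator's projection-valued measure at any $\theta$ would force $E(\alpha+2\pi\Zm)\neq 0$ for some $\alpha$ and hence an eigenvalue of $U^1$; with that observation your argument is complete.
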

This lemma connects the issues we are studying with one of the themes in quantum mechanics, namely the propagation rate of wave packets corresponding to continuous spectrum. Lemma \ref{6.3} is an extension of the well-known RAGE theorem (see e.g. \cite{cycon2009schrodinger}) which is a rigorous variant of a folklore quantum mechanics statement that quantum states corresponding to the continuous spectrum travel to infinity. In our case, travel to infinity happens not in physical space, but in the modes of the operator $-\Delta$ (that is, in Fourier modes). We refer to \cite{constantin2008diffusion} for the proof of Lemma \ref{6.3}.

Now we are ready to give the proof of Theorem \ref{6.2}.

\begin{proof}[Proof of Theorem \ref{6.2}]
Fix any $B> \|\rho_0-\bar{\rho}_0\|_{L^2}$. If for all times we have that $\|\rho^A(\cdot,t)-\bar{\rho}\|_{L^2}<B$, then the solution stays globally regular by Theorem \ref{L2cr}. Otherwise, let
$$
t_0=\inf\{t\big| \|\rho^A(\cdot,t)-\bar{\rho}\|_{L^2}=B\}.
$$
Since the solution is smooth up to time $t_0$, we also have that $\|\rho^A(\cdot,t_0)-\bar{\rho}\|_{L^2}=B$; thus $t_0$ is the first time the $L^2$ norm of $\rho^A(x,t)-\bar{\rho}$ reaches $B$. Note that by Proposition \ref{L2decay}, we must also have $\|\rho^A(\cdot,t_0)\|_{\dot{H}^1}<B_1$, where $B_1=C_0 B^{\frac{12-2d}{4-d}}+2\bar{\rho}B^2$.

We are going to show that if $A\geq A_0(B,\bar{\rho},u)$ is sufficiently large, then after a small time interval of length $\tau$ that we will define shortly, we will have $\|\rho^A(\cdot, t_0+\tau)-\bar{\rho}\|_{L^2}<B$. Moreover, we will have $\|\rho^A(\cdot, t)-\bar{\rho}\|_{L^2}\leq 2B$ for every $t\in [t_0,t_0+\tau]$. This will prove the theorem, as the argument can be applied repeatedly each time the $L^2$ norm reaches the level $B$, showing that $\|\rho^A(\cdot,t)-\bar{\rho}\|_{L^2}\leq 2B$ for all times.

Denote $\lambda_n$ the eigenvalues of $-\Delta$ on $\mathbb{T}^d$ in an increasing order, $0=\lambda_1\leq \lambda_2\leq \dots\leq \lambda_n\leq \dots$ Choose $N$ so that
\begin{equation}\label{30th}
\lambda_N\geq 16C_0(2B)^{\frac{4}{4-d}}+32\bar{\rho},
\end{equation}
where $C_0$ is the constant appearing in (\ref{L2H1}). Observe that $\lambda_NB^2>B_1^2$. Define the compact set $K\subset S$ by
$$
K=\{\phi\in S\big| \|\phi\|_{\dot{H}^1}^2\leq \lambda_N \}
$$
(recall $S$ is the unit sphere in $L^2$). Let $U$ be the unitary operator associated with our weakly mixing flow $u$ as above. Fix $\sigma=0.01$. Let $T_c(N,\sigma,K,U)$ be the time threshold provided by Lemma \ref{6.3}.

We proceed to impose the first condition on $A_0(\rho_0,u)$. We define $\tau$ as below and require that
\begin{equation}\label{31th}
\tau\equiv \frac{T_c(N,\sigma,K,U)}{A}\leq C_1\min\left(1,\bar{\rho}^{-1}, B^{-\frac{4}{4-d}}\right)
\end{equation}
for every $A\geq A_0$, where $C_1$ is the constant appearing in Proposition \ref{L2decay} in (\ref{18th}). It follows from Proposition \ref{L2decay} and Theorem \ref{L2cr} that $\|\rho^A(\cdot, t)-\bar{\rho})\|_{L^2}\leq 2B$ for $t\in [t_0, t_0+\tau]$ and so $\rho^A$ remains smooth on this time interval.

Let us introduce a short-cut notation $\phi_0(x)=\rho^A(x,t_0)$. Let $\eta^A(x,t)$ be the solution of the equation
$$
\partial_t\eta^A+A(u\cdot\nabla)\eta^A=0, \quad \eta^A(x,0)=\phi_0.
$$
Then $\eta^A(x,t)=U^{At}\phi_0$, and we have
\begin{equation}\label{32th}
\frac{1}{\tau}\int_0^{\tau}\|P_N\eta^A(x,t)\|_{L^2}^2dt=\frac{1}{\tau}\int_0^{\tau}\|P_NU^{At}\phi_0\|_{L^2}^2dt=\frac{1}{A\tau}\int_0^{A\tau}\|P_NU^s\phi_0\|_{L^2}^2ds\leq \sigma B^2.
\end{equation}
Here we applied Lemma \ref{6.3} to the vector $\phi_0/\|\phi_0\|_{L^2}$. Indeed, $\phi_0/\|\phi_0\|_{L^2}\in K$ since by (\ref{30th}) we have
$$
\|\phi_0\|_{\dot{H}^1}^2\leq B_1^2<\lambda_N B^2<\lambda_N\|\phi_0\|_{L^2}^2.
$$
Note also that (\ref{31th}) ensures the applicability of Lemma \ref{6.3} and the validity of the last bound in (\ref{32th}).

We now impose the second and last condition on $A_0$. It will be convenient for us now to denote by $t$ time elapsed since $t_0.$
By the approximation Lemma \ref{5.2}, and since we know that $\|\rho^A(\cdot, t_0)\|_{\dot{H}^1}^2\leq B_1<\lambda_N B^2$, as well as $\|\rho^A(\cdot, t_0+t)-\bar{\rho}\|_{L^2}\leq 2B$, we have
\begin{equation}\label{33th}
\frac{d}{dt}\|\rho^A(\cdot, t_0+t)-\eta^A(\cdot, t)\|^2_{L^2}\leq 4\lambda_N B^2F(At)^2+CB^2(B^{\frac{6}{6-d}}+\bar{\rho}^2)
\end{equation}
for all $t\in [0,\tau]$. Here $\tau=T_c/A$ as before. Choose $A_0$ so that
\begin{equation}\label{34th}
\frac{4\lambda_N}{A}\int_0^{T_0}F(s)^2ds +C\tau(B^{\frac{6}{6-d}}+\bar{\rho}^2)\leq 0.01
\end{equation}
for every $A\geq A_0$. Note that since $u$ is smooth, $F(t)$ is a locally bounded function.

We claim that if $A\geq A_0$, then $\|\rho^A(\cdot, t_0+\tau)-\bar{\rho}\|_{L^2}\leq B$. First, the condition (\ref{34th}) allows us to control $\|\rho^A(\cdot, t_0+t)-\bar{\rho}\|_{L^2}$ more tightly, which is convenient. Indeed, since $\|\eta^A(\cdot, t)\|_{L^2}=\|\phi_0\|_{L^2}=B$ for all $t\geq 0$, (\ref{33th}) and (\ref{34th}) imply that
\begin{equation}\label{35th}
0.9B\leq \|\rho^A(\cdot, t_0+t)\|_{L^2}\leq 1.1B
\end{equation}
for $t\in [0,\tau]$. Furthermore, by the estimates (\ref{32th}), (\ref{33th}) and (\ref{34th}) we have
\begin{align*}
&\frac{1}{\tau}\int_0^{\tau}\|P_N\rho^{A}(\cdot, t_0+t)\|_{L^2}^2dt\leq \frac{2}{\tau}\int_0^{\tau}\|P_N\eta^A(\cdot, t)\|_{L^2}^2dt\\
&\quad\quad+\frac{2}{\tau}\int_0^{\tau}\|P_N(\rho^A(\cdot,t_0+t)-\eta^A(\cdot,t))\|_{L^2}^2dt\leq \frac{B^2}{25}.
\end{align*}
Combining this estimate with \eqref{35th} we obtain
\begin{equation}\label{36th}
\frac{1}{\tau}\int_0^{\tau}\|\rho^A(\cdot,t_0+t)\|_{\dot{H}^1}^2dt\geq \frac{1}{\tau}\int_0^{\tau}\lambda_N\|(I-P_N)\rho^A(\cdot,t_0+t)\|_{L^2}^2dt\geq \frac{1}{2}\lambda_NB^2.
\end{equation}

Now we come back to (\ref{L2H1}):
$$
\partial_t\|\rho^A-\bar{\rho}\|_{L^2}^2\leq - \|\rho^A\|_{\dot{H}^1}^2+C_0\|\rho^A-\bar{\rho}\|_{L^2}^{\frac{12-2d}{4-d}}+2\bar{\rho}\|\rho^A-\bar{\rho}\|_{L^2}.
$$
By the estimates (\ref{36th}), (\ref{35th}) and (\ref{30th}), we have
\begin{equation}\label{37th}
\begin{split}
\|\rho^A(\cdot, t_0+\tau)-\bar{\rho}\|_{L^2}^2&\leq B^2+\tau\left(-\frac{1}{2}\lambda_NB^2+C_0(2B)^{\frac{12-2d}{4-d}}+2\bar{\rho}(2B)^2\right)\\
&\leq \left(1-\frac{1}{4}\tau\lambda_N\right)B^2\leq B^2.
\end{split}
\end{equation}
This completes the proof.
\end{proof}

We see that the bound we obtained on the decay of the $L^2$ norm in (\ref{37th}) is stronger than what we needed. In fact, with slightly more effort we can obtain stronger results. We now present an extension of Theorem \ref{6.2} that establishes a complete analog of ``relaxation enhancement'' established in \cite{constantin2008diffusion} for the diffusion-advection equation for the case that also includes chemotaxis. Namely, we show that not only fluid flow can prevent finite time blow up, but in fact it can enforce convergence of the solution to its mean in the long time limit. Intense fluid flow can also act to create an arbitrary strong and fast drop of $\|\rho^A(\cdot,t)-\bar{\rho}\|_{L^2}$.

\begin{thm}\label{6.4}
Suppose $0 \leq \rho_0 \in C^{\infty}(\mathbb{T}^d)$, and let $\rho^A(x,t)$ be the solution of the equation (\ref{28th}). Let u be smooth, incompressible, relaxation enhancing flow. If $A_0(\rho_0,u)$ is the threshold value
defined in the proof of Theorem \ref{6.2}, then for every $A\geq A_0$, we have
\begin{equation}\label{38th}
\|\rho^A(\cdot,t)-\bar{\rho}\|_{L^2}\rightarrow 0
\end{equation}
as $t\rightarrow \infty$. The convergence rate is exponential in time, and can be made arbitrary fast by increasing the value of $A$. Namely, for every $\delta>0$ and $\kappa>0$, there exists $A_1=A_1(\rho_0,u,\kappa, \delta)$ such that if $A\geq A_1$, then
\begin{equation}\label{39th}
\|\rho^A(\cdot,t)-\bar{\rho}\|_{L^2}\leq \|\rho_0-\bar{\rho}\|_{L^2}e^{-\kappa t}
\end{equation}
for all $t\geq \delta$.
\end{thm}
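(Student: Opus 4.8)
The plan is to read Theorem~\ref{6.4} as an iterated version of Theorem~\ref{6.2}, exploiting that the final inequality (\ref{37th}) is really a multiplicative contraction rather than a mere barrier. Rerunning the chain (\ref{30th})--(\ref{37th}) with $B$ replaced by the current value $b:=\|\rho^A(\cdot,t_*)-\bar\rho\|_{L^2}$ of the norm gives, over one window of length $\tau=T_c/A$, the contraction $\|\rho^A(\cdot,t_*+\tau)-\bar\rho\|_{L^2}^2\le(1-\tfrac14\tau\lambda_N)\,b^2$. The reason a single cutoff $N$ (hence a single $K$, $T_c$, $\tau$ and $A_0$) serves at every scale $b\le B$ is that (\ref{30th}) was imposed at the top level $2B$, and since $\frac{12-2d}{4-d}>2$ for $d=2,3$ the chemotactic term is of higher order in $b$, while the mean-value term carries the fixed coefficient $\bar\rho\le\tfrac1{32}\lambda_N$; both therefore stay dominated by the dissipation gain $\tfrac12\lambda_N b^2$ as $b$ shrinks, so the one-window contraction persists all the way down to $b=0$.

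To convert a single contraction into genuine decay I would run a barrier argument against a decaying target $g(t)^2=\|\rho_0-\bar\rho\|_{L^2}^2e^{-2\kappa t}$, which neatly circumvents the only real difficulty, namely that at the start of an arbitrary window the $\dot{H}^1$ norm may sit either below or above the threshold $\lambda_N b^2$. The point is that at a first up-crossing time $t_0$ of the target one has $\frac{d}{dt}\|\rho^A-\bar\rho\|_{L^2}^2\ge\frac{d}{dt}g^2\big|_{t_0}=-2\kappa b^2$, so (\ref{L2H1}) forces $\|\rho^A(\cdot,t_0)\|_{\dot{H}^1}^2\le(2\kappa+2\bar\rho)b^2+C_0b^{\frac{12-2d}{4-d}}$, which lies below $\lambda_N b^2$ once $N$ is chosen with $\lambda_N>2\kappa+2\bar\rho+C_0(2B)^{\frac4{4-d}}$. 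In other words, wherever the solution tries to outrun the target it is automatically in the low-$\dot{H}^1$ regime in which Lemma~\ref{6.3} and the approximation Lemma~\ref{5.2} apply exactly as in Theorem~\ref{6.2}; the resulting contraction then drives the norm back below the target faster than $g$ decays, so the crossing cannot occur. This delivers (\ref{38th}) with an exponential rate for every $A\ge A_0$.

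For the sharp rate (\ref{39th}) I would telescope the one-window contraction and use $1-x\le e^{-x}$ to get $\|\rho^A(\cdot,t)-\bar\rho\|_{L^2}^2\lesssim e^{-\frac14\lambda_N t}\|\rho_0-\bar\rho\|_{L^2}^2$, so that the decay rate is governed by $\lambda_N$ and is independent of $A$ once $N$ is frozen. Hence to surpass a prescribed $\kappa$ I would choose $N=N(\kappa)$ with $\lambda_N\ge 8\kappa$ (legitimate because $u$ is relaxation enhancing on the whole mean-zero subspace) and only then enlarge $A$: increasing $N$ inflates both $T_c(N,\sigma,K,U)$ and the constant $\lambda_N$ in (\ref{33th}), so to keep $\tau=T_c/A$ within the window constraint (\ref{31th}) and the approximation error (\ref{34th}) below $0.01$ one must take $A\ge A_1(\rho_0,u,\kappa,\delta)$. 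The threshold $\delta$ absorbs the initial transient: since $\tau=T_c/A\to0$ as $A\to\infty$, for $A_1$ large the handful of windows needed to enter the exponential regime occupy a time $\ll\delta$, after which the clean bound with prefactor $\|\rho_0-\bar\rho\|_{L^2}$ holds for all $t\ge\delta$.

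The step I expect to be the main obstacle is controlling the within-window behaviour so that the barrier argument is not broken by transient growth. Across a contraction window the $L^2$ norm is only pinned to within a factor close to $1$ by (\ref{35th}), so $\|\rho^A-\bar\rho\|_{L^2}$ can momentarily exceed the target even though it ends the window strictly below it; making the argument rigorous therefore requires applying the barrier at the discrete window endpoints $t_0+n\tau$ and absorbing the $O(1)$ overshoot factor into the slack provided by $\delta$ and by taking $\kappa$ strictly below the attainable rate $\tfrac18\lambda_N$. A related structural point that must be handled with care is that Lemma~\ref{6.3} supplies a purely continuous spectrum only on the mean-zero subspace, so the entire argument has to be carried out for $\rho^A-\bar\rho$ and the mean-zero advected field $\eta^A-\bar\rho$; otherwise the conserved zero mode would lie inside the range of $P_N$ and destroy the scaling of the small-modes average (\ref{32th}) as $b\to0$.
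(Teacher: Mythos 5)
Your proposal is correct and follows essentially the same route as the paper: the same dichotomy between times where $\|\rho^A\|_{\dot H^1}^2$ exceeds $\lambda_N\|\rho^A-\bar\rho\|_{L^2}^2$ (direct decay from (\ref{L2H1})) and times where it does not (one-window contraction via Lemma~\ref{6.3} and Lemma~\ref{5.2}, rerun with $B$ replaced by the current norm), the same choice of $N$ with $\lambda_N$ a large multiple of $\kappa$ before enlarging $A$, and the same use of $\tau=T_c/A\le\delta/2$ to absorb the transient. The only cosmetic difference is that you package the dichotomy as a barrier/first-crossing argument against $B_0e^{-\kappa t}$ and must then patch the within-window overshoot at discrete endpoints, whereas the paper simply partitions $[0,t]$ into the contraction windows $I_l$ and their complement $W$ and multiplies the decay and growth factors directly, which avoids that complication.
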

\begin{proof}
Let us prove (\ref{39th}), since (\ref{38th}) follows from similar (and easier) arguments. The proof largely follows the argument in the proof of Theorem \ref{6.2}, but let us outline the necessary adjustments. We set $B_0=\|\rho_0-\bar{\rho}\|_{L^2}$. 
Choose $N$ so that
\begin{equation}\label{40th}
\lambda_N\geq \max\left(100\kappa, 16C_0(2B_0)^{\frac{4}{4-d}}+32\bar{\rho}\right).
\end{equation}
Define the set $K$, as before, by $\{\phi\in S\big| \|\phi\|_{\dot{H}^1}^2\leq \lambda_N\}$.

For all times $t$ where $\rho^A(x,t)/\|\rho^A(\cdot,t)\|_{L^2}\notin K$, we have $\|\rho^A(\cdot, t)\|_{\dot{H}^1}\geq \lambda_N\|\rho^A(\cdot,t)-\bar{\rho}\|_{L^2}$. It follows from (\ref{L2H1}) and (\ref{40th}) that at such times we have
\begin{equation}\label{41th}
\begin{split}
\partial_t\|\rho-\bar{\rho}\|_{L^2}^2&\leq - \|\rho^A\|_{\dot{H}^1}^2+C_0\|\rho^A-\bar{\rho}\|_{L^2}^{\frac{12-2d}{4-d}}+2\bar{\rho}\|\rho^A-\bar{\rho}\|_{L^2}\\
&\leq \left(-\lambda_N+C_0(2B_0)^{\frac{4}{4-d}}+2\bar{\rho}\right)\|\rho^A-\bar{\rho}\|_{L^2}^2\leq -\frac{1}{2}\lambda_N\|\rho^A-\bar{\rho}\|_{L^2}^2.
\end{split}
\end{equation}
Here in the second inequality we used that $\|\rho^A(\cdot,t)-\bar{\rho}\|_{L^2}\leq 2B_0$ for all times, as we know from the proof of Theorem \ref{6.2} we can ensure by making $A$ sufficiently large; we note that this bound will also follow from our argument below. Thus on the time intervals where $\rho^A(x,t)/\|\rho^A(x,t)\|_{L^2}\notin K$ we have exponential decay of $\|\rho^A(\cdot, t)-\bar{\rho}\|_{L^2}$ at  rate that would imply (\ref{39th}) if all times were like that.

Suppose now that $t_0$ is the smallest time such that $\rho^A(x,t_0)\in K$ ($t_0$ could equal $0$). Let $T_c(N,\sigma,K,U)$ be the time threshold provided by Lemma \ref{6.3} (we set $\sigma=0.01$ as before). Repeat all the steps in the proof of Theorem \ref{6.2} from defining the time step $\tau$ (\ref{31th}) to (\ref{37th}), with $B$ replaced by $\|\rho^A(\cdot, t_0)-\bar{\rho}\|_{L^2}$. In addition, require that $A$ is large enough so that
\begin{equation}\label{42th}
\tau=\frac{T_c(N,\sigma,K,U)}{A}\leq \delta/2.
\end{equation}
We arrive at the estimate
\begin{equation}\label{43th}
\begin{split}
\|\rho^A(\cdot, t_0+\tau)-\bar{\rho}\|_{L^2}&\leq \left(1-\frac{1}{4}\lambda_N\tau \right)\|\rho^A(\cdot, t_0)-\bar{\rho}\|_{L^2}^2\\
&\leq e^{-\frac{1}{4}\lambda_N\tau}\|\rho^A(\cdot, t_0)-\bar{\rho}\|_{L^2}^2.
\end{split}
\end{equation}
Note that even though we do not control the $L^2$ norm of the solution for all times $t$, from (\ref{L2H1}) and (\ref{40th}) it is clear that for every $t\in [t_0,t_0+\tau]\equiv I_0$, we have
\begin{equation}\label{44th}
\|\rho^A(\cdot, t)-\bar{\rho}\|_{L^2}^2\leq e^{\frac{1}{8}\lambda_N (t-t_0)}\|\rho^A(\cdot, t_0)-\bar{\rho}\|_{L^2}^2.
\end{equation}

We continue further in time in a similar fashion. If $\rho^A(x,t)/\|\rho^{A}(\cdot,t)\|_{L^2}\notin K$, we have (\ref{41th}). On the other hand, if
$$
t_n =\inf \{t\big| t\geq t_{n-1}+\tau, \rho^A(x,t)/\|\rho^{A}(\cdot,t)\|_{L^2}\in K\},
$$
we can apply Lemma \ref{6.3} and Lemma \ref{5.2} on $I_n\equiv [t_n, t_n+\tau]$ obtaining
\begin{equation}\label{45th}
\begin{split}
&\|\rho^A(\cdot, t_n+\tau)-\bar{\rho}\|_{L^2}^2\leq e^{-\frac{1}{4}\lambda_N \tau}\|\rho^A(\cdot, t_n)-\bar{\rho}\|_{L^2}^2,\\
&\|\rho^A(\cdot,t)-\bar{\rho}\|_{L^2}^2\leq e^{\frac{1}{8}\lambda_N(t-t_n)}\|\rho^A(\cdot,t_n)-\bar{\rho}\|_{L^2}^2,\quad \mbox{ for every } t\in [t_n,t_n+\tau].
\end{split}
\end{equation}
Now given any $t\geq \delta$, we can represent
$$
[0,t]=W\cup (\cup_{l=0}^n I_l),
$$
where $W$ is the set of times in $[0,t]$ outside all $I_l$. Note that (\ref{41th}) holds for every $s\in W$. Combining (\ref{41th}) and (\ref{45th}), we infer that for every $t\geq \delta$, we have
$$
\|\rho^A(\cdot, t)-\bar{\rho}\|_{L^2}^2\leq e^{-\frac{1}{4}\lambda_N(t-\frac{\delta}{2})}e^{\frac{1}{8}\lambda_N\frac{\delta}{2}}\|\rho_0-\bar{\rho}\|_{L^2}^2\leq e^{-\frac{1}{8}\lambda_N t}\|\rho_0-\bar{\rho}\|_{L^2}^2.
$$
This proves (\ref{39th}).
\end{proof}

\section{Yao-Zlatos flow}

In this section, we describe another class of flows that are capable of suppressing the chemotactic explosion. These flows arise as examples of almost perfect mixers satisfying some sort of natural constraints. They have the advantage of being somewhat more explicitly defined than the RE flows which may be harder to picture. For this reason we will be able to get an explicit estimate, albeit rather weak, for the intensity of mixing necessary to arrest the blow up as a function of the $L^2$ norm of the initial data. For the RE flows, such estimate would be difficult to obtain, primarily due to the challenge of estimating the time $T_c$ from Lemma \ref{6.3}. A quantitative estimate on $T_c$ would require delicate spectral analysis of the operator $u\cdot \nabla$, something that for the moment is out of reach. On the other hand, in contrast to the RE flows, Yao-Zlatos flows are time dependent and active - their construction depends on the density being mixed. We remark that a re
 lated class of efficient mixer flows has been also considered in \cite{alberti2014exponential}.

 We refer to \cite{yao2014mixing} for a detailed discussion of different notions of mixing and the general background, and for further references. For our purpose here, we need one particular result from \cite{yao2014mixing}, that we set about to explain. Let $\mathbb{T}^2\equiv [-1/2,1/2)^2$. Consider the dyadic partition of $\mathbb{T}^2$ with squares $Q_{nij}$ given by
 \begin{equation}\label{46th}
 Q_{nij}=\left[\frac{i}{2^n},\frac{i+1}{2^n}\right]\times\left[\frac{j}{2^n},\frac{j+1}{2^n}\right],\quad i,j=-2^{n-1},\dots,2^{n-1}-1.
 \end{equation}
Suppose $f_0\in C^{\infty}(\mathbb{T}^2)$ and is mean zero, and $u$ is an incompressible flow which is Lipschitz regular in spatial variables. Let $f(x,t)$ denote the solution of transport equation
\begin{equation}\label{47th}
\partial_t f+(u\cdot \nabla)f=0, \quad f(x,0)=f_0(x).
\end{equation}
\begin{thm}\label{7.1}[Yao-Zlatos]
Given any $\kappa, \epsilon\in (0,1/2]$, there exists an incompressible flow $u$ such that the following holds.
\begin{equation}\label{48th}
1.\quad \|\nabla u(\cdot,t)\|_{L^{\infty}}\leq 1, \quad \mbox{ for every }t.
\end{equation}
2. Let $n=[|\log_2(\kappa\epsilon)|]+2$, where $[x]$ denotes the integer part of $x$. Then for some
$$
\tau_{\kappa,\epsilon}\leq C\kappa^{-1/2}|\log(\kappa\epsilon)|^{3/2},
$$
and every $Q_{nij}$ as in (\ref{46th}) we have
\begin{equation}\label{49th}
\left|\frac{1}{|Q_{nij}|}\int_{Q_{nij}}f(x,\tau_{\kappa,\epsilon})\,\, dx\right|\leq \kappa \|f_0\|_{L^{\infty}}.
\end{equation}
\end{thm}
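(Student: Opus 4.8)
The plan is to construct the flow $u$ explicitly as a dyadic cascade of shear flows and to track the evolution of the cell averages in (\ref{49th}) scale by scale. The first observation is that the problem is linear: since $f(x,t)=f_0(\Phi_t^{-1}(x))$ and pure transport preserves the $L^\infty$ norm, the functional $f_0\mapsto |Q_{nij}|^{-1}\int_{Q_{nij}}f(\cdot,\tau)\,dx$ is linear in $f_0$ and controlled by $\|f_0\|_{L^\infty}$, so I may normalize $\|f_0\|_{L^\infty}=1$ and read (\ref{49th}) as the requirement that the flow drive every average over a scale-$2^{-n}$ square below $\kappa$. The number of dyadic scales separating the coarse scale $O(1)$ from the target scale $2^{-n}\sim \kappa\epsilon$ is $m\sim n\sim|\log_2(\kappa\epsilon)|$, and I would organize $u$ into $m$ consecutive stages, the $k$-th stage being responsible for homogenizing the scalar down to scale $2^{-k}$, ideally built self-similarly so that each stage is an exact rescaling of the previous one.

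For the elementary step I would alternate horizontal and vertical shear flows, $u=(\phi(y),0)$ and $u=(0,\psi(x))$, with $\phi,\psi$ smooth, mean zero, and oscillating at dyadic frequency $\sim 2^{k}$. The Lipschitz constraint (\ref{48th}), $\|\nabla u\|_{L^\infty}\le 1$, is exactly what dictates the construction: a shear oscillating at frequency $2^{k}$ must have amplitude $\lesssim 2^{-k}$, so it produces order-one relative displacement across a scale-$2^{-k}$ cell only after a definite slice of time. The quantitative core of the argument is a one-stage lemma asserting that a horizontal shear followed by a vertical shear at scale $2^{-k}$ takes a function that is nearly constant on scale-$2^{-(k-1)}$ cells and redistributes its mass so that the averages over scale-$2^{-k}$ cells are contracted by a definite factor; iterating across stages then drives the averages down while refining the resolution toward $2^{-n}$.

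Summing the per-stage time costs over the $m\sim|\log_2(\kappa\epsilon)|$ stages, and optimizing the amount of time allotted to each scale against the strain-rate cap $\|\nabla u\|_{L^\infty}\le1$, is what produces the bound $\tau_{\kappa,\epsilon}\le C\kappa^{-1/2}|\log(\kappa\epsilon)|^{3/2}$: the logarithmic powers track the number of dyadic scales and the growth of the per-scale mixing time, while the algebraic factor $\kappa^{-1/2}$ reflects that, with a bounded strain rate, pushing the cell averages all the way down to the amplitude $\kappa$ (and not merely refining the scale to $\kappa\epsilon$) requires repeated passes whose cost accumulates polynomially in $1/\kappa$. I expect the main obstacle to be the interaction between scales: a shear that homogenizes at scale $2^{-k}$ in one direction inevitably distorts structure already organized at earlier stages, so one must show that the contraction gained at coarser scales is stable under the subsequent cascade and is not undone by later steps. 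Making this cross-scale estimate quantitative, while respecting the Lipschitz cap on each $u_k$ and keeping the total elapsed time within the stated bound, is the technical heart of the construction; the cleanest route is to arrange the cascade self-similarly so that the inter-scale interference is controlled by the scaling itself rather than estimated \emph{a posteriori}.
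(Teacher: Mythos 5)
There is a genuine gap, and it is worth noting first that the paper does not actually reprove this theorem: its ``proof'' is a citation. Theorem~\ref{7.1} is Theorem~4.3 (periodic version: Theorem~5.1) of \cite{yao2014mixing} after a rescaling of time that normalizes $\|\nabla u\|_{L^\infty}\le 1$, with the conclusion \eqref{49th} read off from the intermediate estimates (4.8) and the choice of $\delta$ in that paper rather than from the final ``$\epsilon$-scale mixing'' statement. The construction being cited is a multi-scale \emph{cellular} flow: at stage $n$ one runs rotations inside each cell, engineered so that all streamlines away from a thin boundary layer have the same rotation period, which is exactly the device that lets one propagate the ``nearly mean zero on $Q_{nij}$'' property from scale $2^{-(n-1)}$ to scale $2^{-n}$ with a controlled per-stage error. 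Your proposal replaces this with alternating horizontal and vertical shears, which is a genuinely different building block, and everything then hinges on your ``one-stage lemma'' -- that one shear pass contracts the cell averages at the new scale by a definite factor without destroying the contraction already achieved at coarser scales. That lemma is asserted, not proved, and it is precisely the hard part: shears move mass in only one direction at a time, leave shear-invariant structures untouched, and there is no analogue in your sketch of the equal-period trick that makes the per-cell evolution controllable. You yourself flag the cross-scale interference as ``the technical heart'' and then defer it to an unexplained self-similarity; as written the argument has no content at the crucial step.

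The quantitative claim is also not derived. You state that optimizing stage times ``produces'' $\tau_{\kappa,\epsilon}\le C\kappa^{-1/2}|\log(\kappa\epsilon)|^{3/2}$, but your own accounting (about $|\log_2(\kappa\epsilon)|$ stages, each needing order-one time to produce an $O(1)$ relative displacement across a scale-$2^{-k}$ cell with a frequency-$2^k$, amplitude-$2^{-k}$ shear) would give $\tau\sim|\log(\kappa\epsilon)|$, which does not match; the factor $\kappa^{-1/2}$ in \cite{yao2014mixing} comes from a specific boundary-layer analysis of the cellular flows (the layer width must shrink with $\kappa$ and the number of stages to keep the accumulated error below $\kappa\|f_0\|_{L^\infty}$, and thinning the layer costs time), not from a generic ``repeated passes'' heuristic. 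If you want to prove the theorem rather than cite it, you should either reproduce the cellular-flow construction and its error propagation, or, if you pursue the shear route, prove the one-stage contraction lemma with explicit constants and track how the error and the time cost accumulate over the $n$ stages to recover (or replace) the stated exponents.
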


This theorem provides a flow $u$ that satisfies uniform in time Lipschitz constraint (\ref{48th}) and mixes the initial density $f_0$ to scale $\epsilon$ with the error at most $\kappa$ in time $\tau_{\kappa,\epsilon}$. The construction in \cite{yao2014mixing} employs a multi-scale cellular flow. On the $n$th stage of the construction, the goal is to make the mean of the function on each of $Q_{nij}$ close to zero, starting with $n=1$. This is achieved by cellular flows which are designed to have the same rotation time period on streamlines away from a thin boundary layer. Such an arrangement makes evolution of density in each cell amenable to fairly precise control, and makes it possible to ensure that the ``nearly mean zero'' property of the solution propagates to smaller and smaller scales. We refer to \cite{yao2014mixing} for the details. Below, we outline only some adjustments that are needed to obtain Theorem~\ref{7.1} from the
arguments in \cite{yao2014mixing}, since it is not stated there in the precise form that we need.

\begin{proof}
Theorem \ref{7.1} is essentially Theorem 4.3 from \cite{yao2014mixing} (or rather Theorem 5.1, which deals with the periodic boundary conditions instead of no flow - but Theorem 5.1 is a direct corollary of Theorem 4.3). We re-scaled time compared to Theorem 4.3 from \cite{yao2014mixing} to make (\ref{48th}) hold. We also replaced the conclusion of the $\epsilon$-scale mixing (defined in \cite{yao2014mixing}) with how it is actually proved: (\ref{49th}) follows directly from (4.8) and the next estimate in \cite{yao2014mixing} as well as the choice of $\delta$ immediately below these two estimates.
\end{proof}

Here is the key corollary of Theorem \ref{7.1} that we will use in our proof.

\begin{cor}\label{7.2}
Let $f_0\in C^{\infty}(\mathbb{T}^2)$ be a mean zero function. For every $\epsilon>0$ there exists a Yao-Zlatos flow $u(x,t)$ given by Theorem~\ref{7.1}, such that $\|\nabla u\|_{L^{\infty}}\leq 1$ for every $t$ and the solution $f(x,t)$ of the equation \eqref{47th} satisfies
\begin{equation}\label{50th}
\|f(\cdot, \tau)\|_{\dot{H}^{-1}}\leq C_3\|f_0\|_{L^{\infty}}\epsilon
\end{equation}
for some
\begin{equation}\label{51th}
\tau\leq C_2\epsilon^{-1/2}|\log\epsilon|^{3/2}.
\end{equation}
Here $C_{2,3} \geq 1$ are universal constants.
\end{cor}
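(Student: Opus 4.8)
The plan is to invoke Theorem~\ref{7.1} with its two parameters chosen equal, namely with $\kappa=\epsilon$ (assume first that $\epsilon\leq 1/2$; the range $\epsilon>1/2$ is disposed of at the end). This produces a flow $u$ with $\|\nabla u\|_{L^\infty}\leq 1$ and a time $\tau=\tau_{\epsilon,\epsilon}$ such that, with $n=[|\log_2(\epsilon^2)|]+2$, every dyadic cell $Q_{nij}$ of side $2^{-n}$ as in \eqref{46th} satisfies $\left|\frac{1}{|Q_{nij}|}\int_{Q_{nij}}f(\cdot,\tau)\,dx\right|\leq\epsilon\|f_0\|_{L^\infty}$. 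The key point is that a mean-zero function whose averages on all cells at scale $2^{-n}$ are tiny, and whose $L^2$ norm is controlled, must have small $\dot{H}^{-1}$ norm. To make this quantitative I would split $f(\cdot,\tau)=g+h$, where $g$ is the piecewise-constant function equal to the cell average $a_{ij}$ of $f(\cdot,\tau)$ on each $Q_{nij}$, and $h=f(\cdot,\tau)-g$ is the cell-wise oscillation. Both $g$ and $h$ are mean zero: transport by an incompressible flow preserves $\int_{\mathbb{T}^2}f=0$, and the cell averages of $h$ vanish by construction.

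For the oscillation $h$ I would argue by duality. Since $h$ has zero mean on each cell, for any mean-zero $\phi\in\dot{H}^1$ one has $\int_{\mathbb{T}^2}h\phi=\sum_{ij}\int_{Q_{nij}}h\,(\phi-\bar\phi_{Q_{nij}})$, where $\bar\phi_{Q_{nij}}$ is the average of $\phi$ over the cell. The Poincar\'e inequality on a square of side $2^{-n}$ gives $\|\phi-\bar\phi_{Q_{nij}}\|_{L^2(Q_{nij})}\leq C\,2^{-n}\|\nabla\phi\|_{L^2(Q_{nij})}$, so a cell-by-cell Cauchy--Schwarz followed by Cauchy--Schwarz across cells yields $\int h\phi\leq C\,2^{-n}\|h\|_{L^2}\|\phi\|_{\dot{H}^1}$, hence $\|h\|_{\dot{H}^{-1}}\leq C\,2^{-n}\|h\|_{L^2}$. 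Because the cell-averaging map is an $L^2$ contraction and $L^2$ is conserved along the transport, $\|h\|_{L^2}\leq\|f(\cdot,\tau)\|_{L^2}=\|f_0\|_{L^2}\leq\|f_0\|_{L^\infty}$, giving $\|h\|_{\dot{H}^{-1}}\leq C\,2^{-n}\|f_0\|_{L^\infty}$.

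For the average part $g$ I would bound $\dot{H}^{-1}$ by $L^2$ using the Poincar\'e inequality on $\mathbb{T}^2$ (legitimate since $g$ is mean zero): $\|g\|_{\dot{H}^{-1}}\leq C\|g\|_{L^2}$. Since there are $2^{2n}$ cells, each of area $2^{-2n}$, and $|a_{ij}|\leq\epsilon\|f_0\|_{L^\infty}$, one gets $\|g\|_{L^2}^2=\sum_{ij}|a_{ij}|^2\,2^{-2n}\leq(\epsilon\|f_0\|_{L^\infty})^2$, so $\|g\|_{\dot{H}^{-1}}\leq C\epsilon\|f_0\|_{L^\infty}$. Combining the two pieces and using that $n\geq|\log_2(\epsilon^2)|+1$ forces $2^{-n}\leq\tfrac12\epsilon^2\leq\epsilon$ yields $\|f(\cdot,\tau)\|_{\dot{H}^{-1}}\leq C\epsilon\|f_0\|_{L^\infty}$. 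For the time, Theorem~\ref{7.1} gives $\tau\leq C\epsilon^{-1/2}|\log(\epsilon^2)|^{3/2}=C'\epsilon^{-1/2}|\log\epsilon|^{3/2}$, which is the claimed form; relabeling constants (all universal, being Poincar\'e constants on the fixed torus together with the constant from Theorem~\ref{7.1}) produces $C_2,C_3\geq 1$. Finally, $\epsilon>1/2$ is handled by applying the construction with $\epsilon=1/2$: the resulting bound is then at most $C_3\|f_0\|_{L^\infty}/2\leq C_3\|f_0\|_{L^\infty}\epsilon$.

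I expect the only substantive work, rather than a genuine obstacle, to be the quantitative passage from ``small cell averages plus bounded $L^2$ mass'' to ``small $\dot{H}^{-1}$ norm,'' i.e.\ the oscillation estimate above, together with careful tracking of the powers of $\epsilon$ and of the logarithm so that the time bound lands exactly at $\epsilon^{-1/2}|\log\epsilon|^{3/2}$. The choice $\kappa=\epsilon$ is precisely what makes the average error and the mixing scale balance against the target and makes the time exponent come out correctly; everything else is a direct application of Theorem~\ref{7.1} and standard Poincar\'e and duality estimates.
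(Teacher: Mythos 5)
Your proposal is correct and follows essentially the same route as the paper: set $\kappa=\epsilon$ in Theorem~\ref{7.1} and convert smallness of the dyadic cell averages into an $\dot{H}^{-1}$ bound via cell-wise Poincar\'e and duality (the paper's Lemma~\ref{7.3} splits the pairing $\int f\phi$ into $\int f(\phi-\bar\phi_{Q})$ plus $\sum\bar\phi_Q\int_Q f$, which is exactly your $f=g+h$ decomposition viewed from the dual side, and yields the same two estimates). The only substantive item the paper's proof contains that yours does not is a mollification step (its Lemma~\ref{7.4}): the Yao--Zlatos field is only Lipschitz, and the paper replaces $u$ by $u_\delta=\phi_\delta\ast u$ and checks that the mixing conclusion survives up to a renormalized constant, since a smooth flow is needed when the corollary is fed into the main theorem; for the corollary as literally stated this is not a gap, but it is worth being aware of.
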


\begin{proof}
To derive this corollary from Theorem \ref{7.1}, let us set $\kappa=\epsilon$. We need to address a couple of issues. The first one is the connection between (\ref{49th}) and $\dot{H}^{-1}$ norm of the solution.

\begin{lem}\label{7.3}
Let $f\in C^{\infty}(\mathbb{T}^2)$ be mean zero. Fix $\epsilon>0$ and suppose that
$$
\left|\frac{1}{|Q_{nij}|}\int_{Q_{nij}}f(x)\, dx\right|\leq \epsilon \|f\|_{L^{\infty}}
$$
for some $n\geq [|\log_2\epsilon|]$ and $i,j=-2^{n-1}, \dots, 2^{n-1}-1$. Then
\begin{equation}\label{52th}
\|f\|_{\dot{H}^{-1}}\leq C_3\|f\|_{L^{\infty}}\epsilon.
\end{equation}
\end{lem}
\begin{proof}
The proof is by duality. Take any $g\in \dot{H}^1$. Since $f$ is mean zero, without loss of generality we can assume that $g$ is also mean zero. Then, denoting $\bar{g}_{Q_{nij}}$ the average value of $g$ over $Q_{nij}$, we have
\begin{align*}
&\left|\int_{\mathbb{T}^2}fg\, dx\right|=\left|\sum_{i,j}\int_{Q_{nij}}fg\, dx\right|\leq \left|\sum_{i,j}\int_{Q_{nij}}f(x)(g(x)-\bar{g}_{Q_{nij}})\, dx\right|+\\ &\left|\sum_{i,j}\bar{g}_{Q_{nij}}\int_{Q_{nij}}f(x)\, dx\right| \leq \sum_{i,j}\|f\|_{L^2(Q_{nij})}\|g-\bar{g}_{Q_{nij}}\|_{L^2(Q_{nij})}+\\
&\epsilon \sum_{i,j}|\bar{g}_{Q_{nij}}| |Q_{nij}|\|f\|_{L^{\infty}}\leq C2^{-n}\sum_{i,j}\|f\|_{L^2(Q_{nij})}\|\nabla g\|_{L^2(Q_{nij})}+\epsilon \|f\|_{L^{\infty}}\|g\|_{L^1}\\
&\leq C\epsilon \|f\|_{L^2}\|g\|_{\dot{H}^1}+\epsilon \|f\|_{L^{\infty}}\|g\|_{\dot{H}^1} \leq C\epsilon \|f\|_{L^{\infty}}\|g\|_{\dot{H}^1}.
\end{align*}
Here we used Poincare inequality in the last and in the penultimate step, and Cauchy-Schwartz inequality in the last step. This proves the lemma.
\end{proof}

 Another technical aspect we need to discuss is the smoothness of $u$. The construction in \cite{yao2014mixing} does not explicitly control higher order derivatives of $u$ beyond the Lipschitz condition $\|\nabla u\|_{L^{\infty}}\leq 1$. However, it is not difficult to see that a properly mollified velocity field will have the same mixing properties up to renormalization by some universal constant. Let $\phi$ be a mollifier, $\phi\geq 0$, $\phi\in C^{\infty}(\mathbb{T}^2)$, supp$(\phi)\subset B_{1/4}(0)$, $\int_{\mathbb{T}^2}\phi (x)\, dx=1$. Denote $\phi_{\delta}(x)= \delta^{-d}\phi(x/\delta)$, and $u_{\delta}(x)=\phi_{\delta}\ast u(x)$.
\begin{lem}\label{7.4}
Suppose that an incompressible vector field $u(x,t)$ satisfies $\|\nabla u\|_{L^{\infty}}\leq D$ for all $x,t$. Assume $f_0\in C^{\infty}(\mathbb{T}^2)$, and denote $f(x,t)$ and $f_{\delta}(x,t)$ solutions of the transport equation (\ref{47th}) with velocity $u$ and mollified velocity $u_{\delta}$ respectively. Fix any $T>0$. Then as $\delta\rightarrow 0$, we have $\|f(x,t)-f_{\delta}(x,t)\|_{L^2}\rightarrow 0$ uniformly in $t\in [0,T]$.
\end{lem}
\begin{proof}
The proof of this lemma is standard and elementary. We provide a brief sketch. First note that $\|\nabla u_{\delta}(\cdot, t)\|_{L^{\infty}}\leq \|\nabla u(\cdot, t)\|_{L^{\infty}}$.
Consider $\Phi_t(x)$ and $\Phi_{t,\delta}(x)$, the trajectory maps corresponding to $u$ and $u_{\delta}$. A straightforward estimate based on Gronwall lemma gives
$$
|\Phi_t(x)-\Phi_{t,\delta}(x)|\leq \delta e^{tD}.
$$
Reversing time, we find that the same holds for the inverse maps:
$$
|\Phi^{-1}_t(x)-\Phi_{t,\delta}^{-1}(x)|\leq \delta e^{tD}.
$$
So we get
$$
\int_{\mathbb{T}^2}|f(x,t)-f_{\delta}(x,t)|^2\, dx=\int_{\mathbb{T}^2}|f_0(\Phi_t^{-1}(x))-f_0(\Phi_{t,\delta}^{-1}(x))|^2\, dx\leq \delta\|\nabla f_0\|_{L^{\infty}}^2e^{tD}.
$$
\end{proof}

To complete the proof of the corollary, note now that we can choose $\delta = \delta(f_0)$ small enough so that
$$
\|f_{\delta}(\cdot,\tau)-f(\cdot,\tau)\|_{\dot{H}^{-1}}\leq \|f_{\delta}(\cdot,\tau)-f(\cdot,\tau)\|_{L^{2}}\leq C_3\|f_0\|_{L^{\infty}}\epsilon.
$$
Then if $u(x,t)$ is Yao-Zlatos vector field yielding (\ref{50th}), we can take $u_{\delta}$ as our smooth flow and get that (\ref{50th}) holds with the renormalized constant $2C_3$.
\end{proof}

Before stating our main result on Yao-Zlatos flows, we need one more auxiliary result. In our scheme, it is convenient to work with the $L^2$ norm of the solution. However Corollary \ref{7.2} involves the $L^{\infty}$ norm, so we need some control over it. We could get it from (\ref{17th}) and Sobolev embedding. However the bound in \eqref{17th} involves norms of higher order derivatives of $u$ and would result in weaker estimates for the flow intensity needed to suppress blow up. We prefer to estimate the $L^{\infty}$ norm of the solution directly.

Let $\rho(x,t)$ be the solution of (\ref{chemo1}):
$$
\partial_t\rho+(u\cdot\nabla) \rho -\Delta \rho + \nabla\cdot(\rho \nabla (-\Delta)^{-1}(\rho-\bar{\rho}))=0,\quad \rho(x,0)=\rho_0(x),\quad x\in \mathbb{T}^d.
$$
where $u(x,t)$ is smooth and incompressible.
\begin{prop}\label{7.5}
Let $0 \leq \rho\in C^{\infty}(\mathbb{T}^2)$. Suppose that $\|\rho(\cdot, t)-\bar{\rho}\|_{L^2}\leq 2B$ for all $t\in [0,T]$ and some $B\geq 1$. Then we also have $\|\rho(\cdot,t)-\bar{\rho}\|_{L^{\infty}}\leq C_4 B\max(B,\bar{\rho}^{1/2})$ for some universal constant $C_4$ and all time $t\in [0,T]$.
\end{prop}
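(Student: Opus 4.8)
The plan is to run a Moser--Alikakos $L^p\to L^\infty$ iteration on the fluctuation $w:=\rho-\bar\rho$. The decisive structural point, and the reason we estimate $L^\infty$ directly rather than through \eqref{17th}, is that the advection term contributes nothing to any $L^p$ norm: since $u$ is divergence free, $\int|w|^{p-2}w\,(u\cdot\nabla w)=\frac1p\int u\cdot\nabla|w|^p=0$, so $u$ drops out completely and the resulting constant $C_4$ will be universal. Multiplying \eqref{chemo1} by $|w|^{p-2}w$ and integrating, the diffusion term yields $+\frac{4(p-1)}{p^2}\bigl\|\nabla|w|^{p/2}\bigr\|_{L^2}^2$, while for the chemotactic term I would use $\Delta(-\Delta)^{-1}(\rho-\bar\rho)=-(\rho-\bar\rho)$ to integrate by parts twice and reduce it to the single power $\tfrac{p-1}{p}\int|w|^{p-2}w^3$ together with a term $\bar\rho\int|w|^p$. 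Bounding $\int|w|^{p-2}w^3\le\int|w|^{p+1}$, this gives the differential inequality
\[
\frac1p\frac{d}{dt}\int|w|^p+\frac{4(p-1)}{p^2}\bigl\|\nabla|w|^{p/2}\bigr\|_{L^2}^2\le \frac{p-1}{p}\int|w|^{p+1}+\bar\rho\int|w|^p .
\]

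The heart of the matter is to control the supercritical quantity $\int|w|^{p+1}=\bigl\||w|^{p/2}\bigr\|_{L^{2+2/p}}^{2+2/p}$. Writing $V:=|w|^{p/2}$, so that $\|V\|_{L^2}^2=\int|w|^p$ and $\|V\|_{L^1}=\int|w|^{p/2}=\|w\|_{L^{p/2}}^{p/2}$, I would apply the two-dimensional Gagliardo--Nirenberg inequality \eqref{14th} to interpolate $\|V\|_{L^{2+2/p}}$ between $\|\nabla V\|_{L^2}$ and $\|V\|_{L^1}$, then use Young's inequality to absorb the resulting power of $\|\nabla V\|_{L^2}$ into the dissipation. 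The Nash inequality \eqref{16th} in the form $\|V\|_{L^2}^2\lesssim\|\nabla V\|_{L^2}\|V\|_{L^1}+\|V\|_{L^1}^2$ handles the $\bar\rho\int|w|^p$ term and, crucially, converts the leftover dissipation into a coercive superlinear damping $-c\,(\int|w|^p)^2/(\int|w|^{p/2})^2$. This damping is what makes the estimate uniform in $T$ (an absorbing-set bound rather than a Gronwall bound), yielding at each level
\[
\sup_{[0,T]}\int|w|^p\le\max\Bigl(\textstyle\int|w_0|^p,\ \Psi_p\bigl(\sup_{[0,T]}\textstyle\int|w|^{p/2}\bigr)\Bigr),
\]
where $\Psi_p$ collects the forcing from the chemotactic and the $\bar\rho$ terms.

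I would then iterate along $p_k=2^k$, starting from the base level $p_1=2$, where the hypothesis gives $\sup_{[0,T]}\|w\|_{L^2}^2\le 4B^2$. The key quantitative observation is that after clearing the factor $\tfrac1p$ the dissipation coefficient is of order one, while the relevant Young exponent $\tfrac{2p}{p-2}\to 2$, so the level constants in $\Psi_{p_k}$ grow only polynomially in $p_k$; consequently $\sum_k p_k^{-1}\log(\text{const}_k)$ converges and the telescoping product tends to a finite, universal limit as $k\to\infty$. Taking $\bigl(\sup_t\int|w|^{p_k}\bigr)^{1/p_k}\to\sup_t\|w\|_{L^\infty}$ then gives the $L^\infty$ bound. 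The two forcing channels in $\Psi_p$ produce the two branches of the final estimate: the chemotactic term $\int|w|^{p+1}$ feeds the base norm back quadratically and yields the $B^2$ branch, while the $\bar\rho$-weighted term yields the $\bar\rho^{1/2}B$ branch, so their maximum is exactly $C_4B\max(B,\bar\rho^{1/2})$.

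The main obstacle is the bookkeeping: one must track the $p$-dependence of the Gagliardo--Nirenberg and Young constants precisely enough to see that $\prod_k(\text{const}_k)^{1/p_k}$ converges to a universal number, and at the same time follow the powers of $\bar\rho$ and $B$ through the recursion so that the clean form $\max(B,\bar\rho^{1/2})$ survives rather than a cruder $(B+\bar\rho)^2$. A second, genuinely necessary point is the initial layer: the recursion also carries the data term $\int|w_0|^p$, so literally it controls $\sup_{[0,T]}\|w\|_{L^\infty}$ by $\max\bigl(\|\rho_0-\bar\rho\|_{L^\infty},\,C_4B\max(B,\bar\rho^{1/2})\bigr)$. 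Since an $L^2$ hypothesis cannot bound $\|\rho_0-\bar\rho\|_{L^\infty}$ at $t=0$, the data term must be removed for positive times using the smoothing implicit in the coercive damping above (equivalently a $t^{-\theta}$ regularization in the Alikakos scheme); this is precisely what lets an $L^2$-only bound on $[0,T]$ yield an $L^\infty$ bound, and it is the step demanding the most care.
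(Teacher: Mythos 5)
Your proposal follows essentially the same route as the paper's own proof (Proposition 9.1 in Appendix II): an Alikakos--Moser iteration along $p_k=2^k$ in which the advection term vanishes by incompressibility, Gagliardo--Nirenberg plus Young absorbs the chemotactic term $\int|w|^{p+1}$ into the dissipation, the Nash inequality converts the leftover dissipation into the coercive damping $-C\,\|f\|_{L^2}^4\|f\|_{L^1}^{-2}$ yielding a time-uniform absorbing-set bound at each level, the telescoping product of level constants converges, and the two forcing channels produce exactly the two branches $B^2$ and $\bar\rho^{1/2}B$ of the final estimate. Your closing point about the initial layer is in fact a subtlety the paper elides --- its proof asserts only that $\|f\|_{L^2}$ ``cannot cross the threshold,'' tacitly assuming the data start below it --- so your remark that the data term must be removed for positive times (or the statement read with that caveat) is a legitimate refinement rather than a defect of your argument.
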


We postpone the proof of this proposition to the appendix.

We are now ready to state the main theorem of this section.

\begin{thm}
Let $\rho_0\geq 0\in C^{\infty}(\mathbb{T}^2)$, and suppose $\|\rho_0-\bar{\rho}\|_{L^2}<B$ for some $B>1$.
Then there exists smooth incompressible flow $u(x,t)$ with $\|\nabla u(\cdot, t)\|_{L^\infty} \leq A(B,\bar \rho)$
such that the solution $\rho(x,t)$ of the equation \eqref{chemo1} is globally regular.
Here we can choose
\begin{equation}\label{53th}
A = C\exp\left(C(1+B+\bar{\rho}^{1/2})\big(\log(1+B+\bar{\rho}^{1/2})\big)^{3/2}\right)
\end{equation}
for some universal constant $C$.

The flow $u(x,t)$ can be represented as
\begin{equation}\label{mixflowmultyz} u(x,t) = \sum_j A u_j(x,t) \chi_{I_j}(t), \end{equation}
where $I_j$ are disjoint time intervals, and $u_j$ are Yao-Zlatos flows given by Corollary~\ref{7.2} with a certain $\epsilon = \epsilon(B,\bar \rho)>0$
and certain initial data.
\end{thm}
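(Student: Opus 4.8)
The plan is to run exactly the scheme of Theorem~\ref{6.2}, replacing the RAGE-type spectral input of Lemma~\ref{6.3} by the quantitative mixing estimate of Corollary~\ref{7.2}. Fix $B>1$ with $\|\rho_0-\bar\rho\|_{L^2}<B$, and fix a target scale $\epsilon=\epsilon(B,\bar\rho)$ and a tolerance $\delta_0=\delta_0(B,\bar\rho)$ to be pinned down at the end. I would build the flow \eqref{mixflowmultyz} greedily: whenever $\|\rho(\cdot,t)-\bar\rho\|_{L^2}$ first reaches the level $B$, at a time I call $t_0$, I switch on an amplitude-$A$ Yao--Zlatos flow $Au_j$ tailored to mix the current density $\rho(\cdot,t_0)$ down to scale $\epsilon$ on an interval $I_j=[t_0,t_0+\tau]$, and I leave the flow off afterwards (so $u=0$ between consecutive $I_j$, consistent with \eqref{mixflowmultyz}). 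The goal of one intervention is to push $\|\rho(\cdot,t)-\bar\rho\|_{L^2}$ strictly below $B$ again while never letting it exceed $2B$ on $I_j$ and on the free phase following it. Since this can be repeated at each later hitting time of $B$, it yields $\|\rho(\cdot,t)-\bar\rho\|_{L^2}\le 2B$ for all $t$, and global regularity then follows from Theorem~\ref{L2cr}.

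For the single step, Proposition~\ref{L2decay} first guarantees $\|\rho(\cdot,t)-\bar\rho\|_{L^2}\le 2B$ on $I_j$ as long as $\tau=\tau_\epsilon/A\le C_1\min(1,\bar\rho^{-1},B^{-4/(4-d)})$, where $\tau_\epsilon\le C_2\epsilon^{-1/2}|\log\epsilon|^{3/2}$ is the unit-amplitude mixing time; this holds once $A$ is large. On $I_j$ Proposition~\ref{7.5} gives $\|\rho(\cdot,t_0)-\bar\rho\|_{L^\infty}\le C_4 B\max(B,\bar\rho^{1/2})$. Let $\eta$ solve the pure transport equation driven by $Au_j$ with $\eta(\cdot,0)=\rho(\cdot,t_0)$; then $\eta-\bar\rho$ is exactly the transported profile of Corollary~\ref{7.2}, so after rescaling time by $A$ one has $\|\eta(\cdot,\tau)-\bar\rho\|_{\dot H^{-1}}\le C_3\|\rho(\cdot,t_0)-\bar\rho\|_{L^\infty}\epsilon$. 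The crucial point is that the amplitude cancels in the approximation Lemma~\ref{5.2}: for the flow $Au_j$ Lemma~\ref{51} gives $F(t)=\exp(CAt)$, hence $F(\tau)=\exp(C\tau_\epsilon)$ is independent of $A$, and integrating \eqref{26th} over $I_j$ (dropping the favorable $-\|\rho\|_{\dot H^1}^2$ term and using $\|\rho(\cdot,t_0)\|_{\dot H^1}^2\le B_1^2$ from Proposition~\ref{L2decay}) produces
\begin{equation*}
\|\rho(\cdot,t_0+\tau)-\eta(\cdot,\tau)\|_{L^2}^2\le \frac{C\big(B_1^2\exp(C\tau_\epsilon)+(B^5+\bar\rho^2B^2)\tau_\epsilon\big)}{A},
\end{equation*}
which tends to $0$ as $A\to\infty$. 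Since $\|\eta(\cdot,\tau)-\bar\rho\|_{L^2}=\|\rho(\cdot,t_0)-\bar\rho\|_{L^2}=B$ and $\|f\|_{\dot H^{-1}}\le C\|f\|_{L^2}$ for mean-zero $f$, these two bounds give $\|\rho(\cdot,t_0+\tau)-\bar\rho\|_{\dot H^{-1}}\le\delta_0$ together with $\|\rho(\cdot,t_0+\tau)-\bar\rho\|_{L^2}\ge 0.9B$ once $A$ is large enough.

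It remains to convert this into a genuine drop of the $L^2$ norm. Writing $w=\rho-\bar\rho$, the interpolation $\|w\|_{L^2}^2\le\|w\|_{\dot H^{-1}}\|w\|_{\dot H^1}$ forces $\|w\|_{\dot H^1}\ge\|w\|_{L^2}^2/(2\delta_0)$ to be large. On the free phase after $I_j$ I would track $\|w\|_{\dot H^{-1}}$ directly: a computation gives $\tfrac12\partial_t\|w\|_{\dot H^{-1}}^2=-\|w\|_{L^2}^2+\int_{\mathbb{T}^2}\rho\,|\nabla(-\Delta)^{-1}w|^2\,dx$, and since $\rho\ge 0$ the chemotactic term is controlled, via Proposition~\ref{7.5} and an $L^4$--$\dot H^{-1}$ interpolation, by $C\|w\|_{L^2}^2\|w\|_{\dot H^{-1}}+\bar\rho\|w\|_{\dot H^{-1}}^2$; hence for $\delta_0$ small $\|w\|_{\dot H^{-1}}$ stays $\le 2\delta_0$ for a definite time. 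During that time $\|w\|_{\dot H^1}$ remains large, so feeding it into \eqref{L2H1} makes $\partial_t\|w\|_{L^2}^2<0$ and drives $\|w\|_{L^2}$ below $B$. Choosing $\delta_0\asymp B/\max(B,\bar\rho^{1/2})$ with a small universal constant (so as to beat $C_0$ and the $\bar\rho$ term in \eqref{L2H1}), and then $\epsilon\asymp 1/\max(B^2,\bar\rho)$ (so that $C_3C_4B\max(B,\bar\rho^{1/2})\epsilon\le\delta_0/2$), turns $\tau_\epsilon\le C_2\epsilon^{-1/2}|\log\epsilon|^{3/2}$ into $\tau_\epsilon\lesssim(1+B+\bar\rho^{1/2})\big(\log(1+B+\bar\rho^{1/2})\big)^{3/2}$, and the requirement $A\gtrsim\exp(C\tau_\epsilon)$ forced by the display above is exactly the bound \eqref{53th}.

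The main obstacle is precisely this last conversion. Unlike the RAGE input of Theorem~\ref{6.2}, which controls the low modes in time average and therefore lets one integrate \eqref{L2H1} over the whole mixing interval, Corollary~\ref{7.2} certifies small $\dot H^{-1}$ only at the single final time of mixing, and mixing itself does not dissipate $\|w\|_{L^2}$. The resolution is the free relaxation phase combined with the quantitative \emph{stability} of the smallness of $\dot H^{-1}$, which must be shown to survive the chemotactic ``un-mixing'' term $\int\rho\,|\nabla(-\Delta)^{-1}w|^2$; this is where the sign condition $\rho\ge 0$ and the $L^\infty$ bound of Proposition~\ref{7.5} are essential. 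A secondary bookkeeping point is to check that the triggering times do not accumulate, which holds because each intervention produces a definite decrease of $\|w\|_{L^2}$ and the rate of return to the level $B$ is bounded as long as $\|w\|_{L^2}\le 2B$.
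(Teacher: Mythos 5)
Your proposal is correct and follows the paper's architecture almost exactly --- the greedy interventions at hitting times of the level $B$, Proposition~\ref{L2decay} to keep the $L^2$ norm below $2B$ during an intervention, Proposition~\ref{7.5} to convert the $L^2$ bound into the $L^\infty$ bound needed by Corollary~\ref{7.2}, the amplitude-cancellation $F(\tau_\epsilon/A)=\exp(C\tau_\epsilon)$ in Lemma~\ref{5.2}, the choice $\epsilon\asymp \big(\sqrt{C_0B^2+\bar\rho+1}\,\max(B,\bar\rho^{1/2})\big)^{-1}$, and the interpolation $\|w\|_{\dot H^1}\ge \|w\|_{L^2}^2/\|w\|_{\dot H^{-1}}$ fed into \eqref{L2H1} to produce the decrease; the exponential form of \eqref{53th} is forced by the approximation constraint in both arguments. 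The one place where you genuinely diverge is the step you correctly single out as the main obstacle: propagating the smallness of $\|\rho-\bar\rho\|_{\dot H^{-1}}$, which Corollary~\ref{7.2} certifies only at the single post-mixing time, over a window long enough for \eqref{L2H1} to act. You do this with a new nonlinear energy estimate, $\tfrac12\partial_t\|w\|_{\dot H^{-1}}^2=-\|w\|_{L^2}^2+\int\rho\,|\nabla(-\Delta)^{-1}w|^2\,dx$, bounding the chemotactic term by $C\|w\|_{L^2}^2\|w\|_{\dot H^{-1}}+\bar\rho\|w\|_{\dot H^{-1}}^2$ (Ladyzhenskaya plus the decomposition $\rho=w+\bar\rho$); this is a correct identity and the Gronwall window it yields, of length $\gtrsim \min(\bar\rho^{-1},1)$ up to constants, is indeed long enough to accommodate the required drop time $\lesssim \max(B^2,\bar\rho)^{-1}$, so your route closes. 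The paper avoids this extra lemma entirely: it runs the approximation Lemma~\ref{5.2} on the \emph{doubled} window $[t_0,t_0+2\tau/A]$, turning the flow off in both the $\rho^A$ and $\eta^A$ equations on the second half, so that $\eta^A$ is frozen at its well-mixed state there and the $\dot H^{-1}$ smallness of $\rho^A-\bar\rho$ on $[\tau/A,2\tau/A]$ follows from the triangle inequality $\|\rho^A-\bar\rho\|_{\dot H^{-1}}\le\|\eta^A-\bar\rho\|_{\dot H^{-1}}+\|\rho^A-\eta^A\|_{L^2}$ with the already-controlled $L^2$ approximation error. Your version makes the mechanism --- stability of mixedness under the nonlinear ``un-mixing'' term, using $\rho\ge0$ and the $L^\infty$ bound --- more explicit and would survive settings where one cannot freeze a reference profile; the paper's version is shorter because it recycles Lemma~\ref{5.2} and requires no new estimate. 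The only points needing care in your write-up are bookkeeping: Proposition~\ref{L2decay} is stated from the level $B$, so the $2B$ bound on the free phase should be justified from $t_0$ (total elapsed time $\tau_\epsilon/A$ plus the free phase within the $C_1\min(1,\bar\rho^{-1},B^{-2})$ window), and $\delta_0$ must carry a small enough universal constant so that $C\delta_0\le\tfrac12$ in the cubic term as well as beating $C_0$ in \eqref{L2H1}; both are routine.
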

\begin{remark}
We have to deploy different Yao-Zlatos flows in \eqref{mixflowmultyz} due to the fact that these flows are designed to mix a specific
initial data, and one can envision nonlinear dynamics attempting to break the $L^2$ barrier in different ways.
\end{remark}
\begin{remark}
Similarly to Theorem~\ref{6.4} for the RE flows, one can use combinations of Yao-Zlatos flows to achieve stronger results, such as convergence of the solution to the mean, and at increasingly fast rate if the flow amplitude is allowed to grow. We will not pursue a walk through these results here, instead leaving it to the interested reader.
\end{remark}
\begin{proof}
The scheme of the proof is similar to the RE flows case, but Corollary \ref{7.2} replaces Lemma \ref{6.3} with some necessary adjustments.

We start with $u=0$ on some initial time interval.
If $\|\rho(\cdot, t)-\bar{\rho}\|_{L^2}\leq B$ for all $t$, global regularity follows. Suppose this is not the case, and let $t_0$ be the first time when $\|\rho(\cdot, t_0)-\bar{\rho}\|_{L^2}=B$. Similarly to the RE case, we also know that $\|\rho(\cdot,t_0)\|_{\dot{H}^1}\leq B_1$. In addition, Proposition \ref{7.5} ensures that $\|\rho(\cdot, t_0)-\bar{\rho}\|_{L^{\infty}}\leq C_4 B \max(B,\bar{\rho}^{1/2})$.

Fix $\epsilon$ given by
\begin{equation}\label{54th}
\epsilon = \frac{1}{8C_3\sqrt{4C_0B^2+2\bar{\rho}+1}(1+C_4\max(B,\bar{\rho}^{1/2}))}.
\end{equation}
Take a flow $u$ guaranteed by Corollary \ref{7.2} corresponding to this value of $\epsilon$ and the initial density $\rho(x,t_0)$, and let $\tau$ be the time in \eqref{51th}.
Denote $\eta^A(x,t)$ the solution of the equation
\begin{equation}\label{freeetaA}
\partial_t\eta^A+A(u\cdot \nabla)\eta^A=0, \quad \eta^A(x,0)=\eta_0\equiv \rho(x,t_0).
\end{equation}
Then by Corollary \ref{7.2} and Proposition \ref{7.5} we have
\begin{equation}\label{55th}
\|\eta^A(\cdot, \tau/A)-\bar{\rho}\|_{\dot{H}^{-1}}\leq C_3\|\eta_0-\bar{\rho}\|_{L^{\infty}}\epsilon\leq \frac{B}{8\sqrt{4C_0B^2+2\bar{\rho}+1}}.
\end{equation}
We will set $u(x,t)=0$ in \eqref{freeetaA} for $t\geq \tau/A$.

Now we are going to turn on the same flow $u(x,t)$ in the equation for $\rho^A$ at time $t_0$, for the duration $\tau/A$. Let us denote $\rho^A(x,t_0+t)$ the solution of the equation
$$
\partial_t\rho^A+A(u\cdot\nabla) \rho^A -\Delta \rho^A + \nabla\cdot(\rho^A \nabla (-\Delta)^{-1}(\rho^A-\bar{\rho}))=0,\quad \rho^A(x,0)=\rho(x,t_0).
$$
The first condition that we are going to impose on $A$ is that
\begin{equation}\label{56th}
\frac{2\tau}{A}\leq C_1\min\big(1,\bar{\rho}^{-1},B^{-2}\big).
\end{equation}
Given \eqref{51th} and \eqref{54th}, it is easy to check that \eqref{56th} holds if \eqref{53th} is satisfied.
Recall that given \eqref{56th}, Proposition \ref{L2decay} ensures
\begin{equation}\label{57th}
\|\rho^A(x,t_0+t)-\bar{\rho}\|_{L^2} \leq 2B
\end{equation}
for $t\in [0,2\tau/A]$ and the solution stays smooth on this time interval. Next, by the approximation Lemma \ref{5.2}, we have
\begin{equation}\label{58th}
\begin{split}
\frac{d}{dt}\|\rho^A(\cdot, t_0+t)-\eta^A(\cdot, t)\|_{L^2}^2&\leq -\|\rho^A(\cdot,t_0+t)\|_{\dot{H}^1}^2+4\|\rho(\cdot,  t_0)\|_{\dot{H}^1}^2\exp\left(2C\int_0^{At}\|\nabla u\|_{L^{\infty}}ds\right)\\
&+C\|\rho^A(\cdot, t_0+t)-\bar{\rho}\|_{L^2}^2\left(\|\rho^A(\cdot, t_0+t)-\bar{\rho}\|_{L^2}^{3}+\bar{\rho}^2\right)\\
&\leq 4B_1^2\exp(2CAt)+4CB^2\left((2B)^{3}+\bar{\rho}^2\right)
\end{split}
\end{equation}
for every $t\in [0, 2\tau/A]$. Here we used (\ref{57th}) in the last step. Let us now impose the second condition on $A$ which says that it should be large enough so that
\begin{equation}\label{59th}
\frac{2B_1^2}{CA}\exp(4C\tau)+\frac{2\tau}{A}4CB^2\left((2B)^{3}+\bar{\rho}^2\right)\leq \frac{B^2}{64(4C_0B^2+2\bar{\rho}+1)}.
\end{equation}
Note that in particular (\ref{59th}) implies that
\begin{equation}\label{60th}
0.8B\leq \|\rho^A(\cdot, t_0+t)-\bar{\rho}\|_{L^2}\leq 1.2B
\end{equation}
for $t\in [0,2\tau/A]$. Also, (\ref{59th}), (\ref{55th}) and (\ref{58th}) can be used to estimate that for every $t \in [\tau/A, 2\tau/A]$ we have
\begin{align*}
\|\rho^A(\cdot, t_0+t)-\bar{\rho}\|_{\dot{H}^{-1}} \leq \|\eta^A(\cdot, t) - \bar \rho\|_{\dot{H}^{-1}} + \|\rho^A(\cdot, t_0+t)-\eta^A(\cdot, t)\|_{L^2} \leq \\
\frac{B}{8\sqrt{4C_0B^2+2\bar{\rho}+1}}+\frac{B}{8\sqrt{4C_0B^2+2\bar{\rho}+1}}=\frac{B}{4\sqrt{4C_0B^2+2\bar{\rho}+1}}.
\end{align*}
Therefore, using \eqref{60th}, we obtain
\begin{equation}\label{61th}
\|\rho^A(\cdot, t_0+t)-\bar{\rho}\|_{\dot{H}^1}\geq \frac{\|\rho^A(\cdot, t_0+t)-\bar{\rho}\|_{L^2}^2}{\|\rho^A(\cdot, t_0+t)-\bar{\rho}\|_{\dot{H}^{-1}}}\geq 2B\sqrt{4C_0B^2+2\bar{\rho}+1}
\end{equation}
for all $t\in [\tau/A, 2\tau/A]$. The proof of global regularity is completed similarly to the proof of Theorem \ref{6.2} using (\ref{L2H1}), (\ref{60th}) and (\ref{61th}).

Finally, the sufficiency of the condition (\ref{53th}) follows from straightforward analysis of the bounds (\ref{54th}), (\ref{51th}), and (\ref{59th}).
The approximation lemma constraint (\ref{59th}) is what truly determines the exponential form of (\ref{53th}).
\end{proof}
\begin{remark}\label{noslip}
One can check that by using Theorem 5.4 from \cite{yao2014mixing} and similar arguments, we can change the periodic setting to the finite domain with no-slip or no-flow boundary condition for $u(x,t)$, and obtain analogous results but with a somewhat weaker estimate for the flow intensity. 
We leave the details to the interested reader.
\end{remark}

\section{Discussion and generalization}

The scheme developed in the previous sections of this paper should be flexible enough to be applied in different situations. Here we briefly and informally discuss the main features that appear to be necessary to apply our analysis. On the most general informal level, one can say that the idea of the scheme is that the fluid flow, if sufficiently intense and with strong
mixing properties, can make a supercritical equation into subcritical one for a given initial data.

It appears that for the scheme to be applicable to a nonlinearity $N(\rho)$ we need that for the solutions of the equation
$$
\partial_t\rho+(u\cdot \nabla)\rho-\Delta \rho+N(\rho)=0,
$$
either the mean or some norm of $\rho$ does not grow or at least obeys global finite (even if growing) bound in time. Without such assumption, it is difficult to rule our finite time blow up of the mean value of the solution, which large $\dot{H}^1$ norm has no way to arrest. The second condition that is needed concerns the bound on the nonlinear term in the spirit of
\begin{equation}\label{62th}
\left|\int N(\rho)\rho \, dx\right|\leq Cf(\|\rho\|_{L^2})\|\rho\|_{\dot{H}^1}^a,
\end{equation}
with $a<2$. This would allow control of the $L^2$ norm growth by diffusion when $\dot{H}^1$ norm is large. The third condition is that some analog of the approximation lemma holds. This seems to require bounds similar to (\ref{62th}).


Of course, the scheme can also be adapted to the cases where diffusion is given by some dissipative operator other than Laplacian, for example
a sufficiently strong fractional Laplacian, in which case the $\dot{H}^1$ norm needs to be replaced by some other norm natural in the given context. It is also likely that diffusion term
does not have to be linear, even though this may require subtler analysis.

As far as other possible classes of flows that may have the blow up arresting property, the main clearly sufficient requirement for our scheme to be applicable appears to be as follows. First, the flows should be sufficiently regular and in particular satisfy Lipschitz bound in spatial variables. Secondly, for every $\epsilon > 0$ it should be possible to find a flow $u_{\epsilon}$ from the given class, with uniform in time Lipschitz bound, such that for every $f_0\in C^{\infty}$ the solution $f(x,t)$ of the transport equation
$$
\partial_t f+(u_\epsilon \cdot \nabla)f=0, \quad f(x,0)=f_0(x)
$$
satisfies
\begin{equation}\label{63th}
\|f(\cdot, \tau_{\epsilon})\|_{\dot{H}^{-1}}\leq \epsilon C(\|f_0\|_{L^\infty})
\end{equation}
for some $\tau_{\epsilon}<\infty$. Observe that even though we did not frame the discussion of the mixing effect of the RE flows in terms of the $\dot{H}^{-1}$ norm, Lemma \ref{6.3} clearly implies that (\ref{63th}) holds for the RE flows. There are other classes of flows that look likely to satisfy these properties, such as optimal mixing flows discussed in \cite{lin2011optimal}. Generally,
decay of the $\dot{H}^{-1}$ norm is one of the general measures of the mixing ability of the flow, hence we have a link between efficient mixing and suppression of blow up, which is quite
natural. We refer to \cite{lin2011optimal}, \cite{lunasin2012optimal}, \cite{iyer2014lower}, \cite{seis2013maximal} for further discussion of $\dot{H}^{-1}$ norm as a measure of
mixing and some bounds on mixing rates for natural classes of flows. It also looks possible that some flows that do not in general lead to $\dot{H}^{-1}$ norm decay without diffusion can still be effective suppressors of blow up
if diffusion is taken into account. A natural and common class to be investigated here are some families of stationary cellular flows.
Furthermore, similarly to \cite{constantin2008diffusion}, our construction can be applied more generally to the case where the transport part of the equation is replaced by some other unitary evolution for which an analog of (\ref{63th}) holds. We plan to address some of these generalizations in future work.

\section {Appendix I: Finite time blow up}\label{finbusec}

In this section, our main result is a construction of examples where solutions to the Keller-Segel equation set on $\Tm^2$ without advection (\ref{chemo}) blow up in finite time. As we mentioned in the introduction, similar results are well known in slightly different settings. The argument below is included for the sake of completeness. It is closely related to the construction of \cite{nagai2001blowup}, but is simpler. The argument is essentially local and can be adapted to other situations as well. Although we will focus on the $d=2$ case, some auxiliary results that remain valid in every dimension will be presented in more generality.

\begin{thm}\label{blowupt}
There exist $\rho_0\in C^{\infty}(\mathbb{T}^2)$, $\rho_0\geq 0$, such that the corresponding solution $\rho(x,t)$ of the equation (\ref{chemo}) set on $\mathbb{T}^2$ blows up in finite time.
\end{thm}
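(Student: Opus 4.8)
The plan is to adapt the classical virial (second-moment) blow-up argument of J\"ager--Luckhaus and Nagai to the periodic setting, localized around a single concentration point. The mechanism is the supercriticality of mass in two dimensions: once more than $8\pi$ of mass is gathered in a small region, chemotactic self-attraction beats diffusion and forces collapse. First I would take $\mathbb{T}^2=[-1/2,1/2)^2$ and choose $\rho_0\geq 0$ smooth, supported in a tiny ball $B_r(0)$, carrying total mass $M_0=\int_{\mathbb{T}^2}\rho_0\,dx$ with a fixed margin $M_0\geq 8\pi+\delta_0$. Recall that the total mass, and hence $\bar\rho=M_0$, is conserved by the flow.

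Next I would introduce the localized moment $\mathcal{M}(t)=\int_{\mathbb{T}^2}\psi(x)\rho(x,t)\,dx$, where $\psi\geq 0$ is smooth, equals $|x|^2$ on $B_{R/2}(0)$, and is cut off to vanish outside $B_R(0)$, for an intermediate scale $r\ll R\ll 1$. Differentiating and integrating by parts (no boundary terms on $\mathbb{T}^2$) gives $\frac{d}{dt}\mathcal{M}=\int\Delta\psi\,\rho\,dx+\int\nabla\psi\cdot\rho\nabla c\,dx$, where $c=(-\Delta)^{-1}(\rho-\bar\rho)$; crucially, $\bar\rho$ drops out of $\nabla c$. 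Writing $\nabla c(x)=\int\nabla_x G(x,y)\rho(y)\,dy$ with the periodic Green's function $G(x,y)=-\frac{1}{2\pi}\log|x-y|+H(x-y)$, $H$ smooth, and symmetrizing the resulting double integral produces the kernel $\frac12[\nabla\psi(x)-\nabla\psi(y)]\cdot\nabla_x G(x,y)$. On the core, where $\psi=|x|^2$, this kernel equals $-\frac{1}{2\pi}+(x-y)\cdot\nabla H(x-y)$, so the leading contribution is $-\frac{1}{2\pi}m_{\mathrm{core}}(t)^2$, with $m_{\mathrm{core}}$ the mass in the core, while $\Delta\psi=4$ there contributes $4\,m_{\mathrm{core}}$. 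This recovers the familiar inequality $\frac{d}{dt}\mathcal{M}\leq 4\,m_{\mathrm{core}}-\frac{1}{2\pi}m_{\mathrm{core}}^2+\mathcal{E}$.

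The error $\mathcal{E}$ collects three harmless pieces: the smooth correction $(x-y)\cdot\nabla H(x-y)=O(R)$ on the core; the annulus $B_R\setminus B_{R/2}$, where $\Delta\psi$ and $|\nabla\psi|$ are bounded by $C$ and $CR$; and the cross terms with one variable outside the core. Each is $O(R)$ or controlled by the small mass lying outside $B_{R/2}$, so by taking $R$ small and the initial mass well inside $B_r$ they are dominated by the gap $\frac{1}{2\pi}(8\pi+\delta_0)^2-4(8\pi+\delta_0)>0$. Hence, as long as $m_{\mathrm{core}}(t)\geq 8\pi+\frac{\delta_0}{2}$, one has $\frac{d}{dt}\mathcal{M}\leq -\delta_1<0$ for a fixed $\delta_1>0$. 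Since $\mathcal{M}\geq 0$ and $\mathcal{M}(0)\leq M_0 r^2$, the moment would reach zero in time at most $M_0 r^2/\delta_1$, contradicting positivity --- provided the solution is still smooth and the mass is still concentrated.

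The main obstacle, and the only genuinely new point on the torus, is ruling out that mass escapes the favorable region before the moment collapses. I would control this with a companion cutoff-mass estimate: for $\varphi$ smooth, equal to $1$ on $B_{R/2}$ and supported in $B_R$, the identity $\frac{d}{dt}\int\varphi\rho=\int\Delta\varphi\,\rho+\int\nabla\varphi\cdot\rho\nabla c$ together with the elementary bound $|\nabla c(x)|\lesssim M_0/R$ for $x$ in the annulus (mass sits in $B_r$, $r\ll R$) shows that $m_{\mathrm{core}}$ can decrease no faster than $C M_0 R^{-2}$ times the annulus mass. Thus $m_{\mathrm{core}}(t)\geq 8\pi+\frac{\delta_0}{2}$ persists for a time bounded below by a positive constant depending only on $R$ and $M_0$. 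Choosing the scales in the order $r\ll R\ll 1$ makes the moment's collapse time $M_0 r^2/\delta_1$ strictly shorter than this leakage time, which closes the argument: the solution cannot remain smooth, so it blows up in finite time. This localized leakage estimate is exactly the sort of auxiliary bound that remains valid in every dimension, even though the decisive supercritical numerology $8\pi$ is special to $d=2$.
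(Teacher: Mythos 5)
Your proposal follows essentially the same route as the paper's Appendix I: a localized second-moment (virial) functional built from a cutoff that equals $|x|^2$ near the concentration point, a symmetrization of the chemotactic double integral to extract the leading term $-\frac{1}{2\pi}m_{\mathrm{core}}^2$, a representation of the periodic $\nabla(-\Delta)^{-1}$ as the planar kernel $-\frac{1}{2\pi}\frac{x-y}{|x-y|^2}$ plus controllable corrections, and a companion cutoff-mass lemma guaranteeing that mass cannot leak out of the core before the moment is driven negative. The scale separation $r\ll R\ll 1$ and the resulting contradiction with positivity of the moment are exactly the paper's mechanism (the paper simply takes the mass large rather than tracking the sharp $8\pi$ margin, which only changes the bookkeeping).

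One step needs repair: in the leakage lemma you justify the bound $|\nabla c(x)|\lesssim M_0/R$ on the annulus by saying the mass ``sits in $B_r$,'' but for $t>0$ the density is instantly positive everywhere, so mass adjacent to the annulus makes this pointwise bound unavailable without an a priori $L^p$ control on $\rho$. The fix is the same symmetrization you already use for the main term: writing $\int \nabla\varphi\cdot\rho\,\nabla c\,dx$ as a double integral with the antisymmetrized kernel $\frac12[\nabla\varphi(x)-\nabla\varphi(y)]\cdot\frac{x-y}{|x-y|^2}$, bounded by $\|\nabla^2\varphi\|_{L^\infty}\lesssim R^{-2}$, yields the unconditional leakage rate $CM_0^2R^{-2}$ (this is precisely the paper's Lemma on $\int\rho\,\phi(2x)\,dx$), after which your choice of scales closes the argument unchanged.
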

Without loss of generality, we will assume that the spatial period of initial data and solution is equal to one, so $\mathbb{T}^2=[-1/2,1.2]^2$. Let us first state a lemma that will allow us to conveniently estimate the chemotactic term in the equation.
\begin{lem}\label{2.2}
Assume $\mathbb{T}^d=[-1/2,1/2]^d$, $d\geq 2$. For every $f(x)\in C^{\infty}(\mathbb{T}^d)$, we have
\begin{equation}\label{infinsum}
\nabla(-\Delta)^{-1}(f(x)-\bar{f})=-\frac{1}{c_d}\lim_{\gamma\rightarrow 0+}\int_{\mathbb{R}^d}\frac{(x-y)}{|x-y|^d}(f(y)-\bar{f})e^{-\gamma |y|^2}dy.
\end{equation}
Here on the right hand side $f(y)$ is extended periodically to all $\mathbb{R}^d$, $\bar f$ denotes the mean value of $f$,
and $c_d$ is the area of the unit sphere in $d$ dimensions.
\end{lem}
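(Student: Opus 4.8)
The kernel on the right is, up to a sign, the gradient of the Newtonian potential: writing $\Phi$ for the fundamental solution of $-\Delta$ on $\mathbb{R}^d$ (so that $-\Delta\Phi=\delta_0$), one checks by differentiating $\Phi(x)=\frac{1}{(d-2)c_d}|x|^{2-d}$ for $d\geq 3$ and $\Phi(x)=\frac{1}{2\pi}\log\frac1{|x|}$ for $d=2$ that $\nabla\Phi(x)=-\frac{1}{c_d}\frac{x}{|x|^d}$, so $\frac{x-y}{|x-y|^d}=-c_d\,\nabla_x\Phi(x-y)$. Setting $g=f-\bar f$, which is smooth, periodic and mean zero, the claim \eqref{infinsum} is therefore equivalent to
\begin{equation}\label{eqplan}
\nabla(-\Delta)^{-1}g(x)=\lim_{\gamma\to 0+}\int_{\mathbb{R}^d}\nabla_x\Phi(x-y)\,g(y)\,e^{-\gamma|y|^2}\,dy.
\end{equation}
The plan is to verify \eqref{eqplan} by matching Fourier symbols. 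First I would record that for every fixed $\gamma>0$ the integral converges absolutely: $|\nabla\Phi(x-y)|\sim|x-y|^{1-d}$ is locally integrable, while the Gaussian weight tames the growth of $g$ at infinity. No such absolute convergence survives at $\gamma=0$, since integrating $|x-y|^{1-d}$ against a bounded function over a ball of radius $R$ grows like $R$; it is precisely the mean-zero cancellation of $g$ that is responsible for the existence of the limit.

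For the computation itself, I would view the regularized integral as the convolution $(\nabla\Phi)\ast(g\,e^{-\gamma|\cdot|^2})$ and pass to the Fourier transform. The symbol of convolution with $\nabla\Phi$ is $\widehat{\nabla\Phi}(\xi)=2\pi i\xi\,\widehat{\Phi}(\xi)$, and since $-\Delta\Phi=\delta_0$ gives $\widehat{\Phi}(\xi)=(4\pi^2|\xi|^2)^{-1}$, this symbol is a constant multiple of $i\xi/|\xi|^2$. On the other hand, the Fourier transform of the periodic factor satisfies, as $\gamma\to0+$,
\begin{equation*}
\widehat{g\,e^{-\gamma|\cdot|^2}}\ \longrightarrow\ \sum_{k\in\mathbb{Z}^d}\hat g(k)\,\delta_k
\end{equation*}
in the sense of tempered distributions, this being the Poisson-summation identity for the Fourier transform of a periodic function. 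Because $g$ has mean zero, $\hat g(0)=0$, so the Dirac mass at the origin --- exactly where the symbol $i\xi/|\xi|^2$ is singular --- is absent, and the product of the symbol with the limiting comb is well defined. Taking the inverse transform, the right-hand side of \eqref{eqplan} converges to $\sum_{k\neq 0}\hat g(k)\,\frac{ik}{2\pi|k|^2}\,e^{2\pi i k\cdot x}$, which is precisely the Fourier series of $\nabla(-\Delta)^{-1}g$ in the normalization fixed above, identifying the limit with the left-hand side.

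The main work, and the only genuine obstacle, is the rigorous justification of this distributional limit together with its upgrade to a classical identity. I would make it fully rigorous by an Ewald-type splitting of the kernel into a near-field piece, supported near the diagonal and controlled by the local integrability estimate above, and a smooth far-field piece whose periodization is summed by Poisson summation; the mean-zero condition enters both to guarantee convergence of the far-field sum and to cancel the zero mode. Finally, since $f\in C^\infty(\mathbb{T}^d)$ forces $\hat g(k)$ to decay faster than any power of $|k|$, the resulting series converges absolutely and uniformly, so the distributional limit is in fact attained pointwise and defines the smooth vector field $\nabla(-\Delta)^{-1}g$. This completes the verification of \eqref{infinsum}.
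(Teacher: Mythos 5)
Your argument is correct and is essentially the paper's own proof read in the opposite direction: both verify the identity on the Fourier side, using that the Fourier transform of the Gaussian-regularized periodic extension concentrates on the lattice $\{\hat g(k)\delta_k\}$, that the kernel $\frac{x-y}{|x-y|^d}$ is $-c_d\nabla_x\Phi(x-y)$ with symbol proportional to $i\xi/|\xi|^2$, and that the mean-zero condition removes the singular $k=0$ mode. The paper's writeup is no more detailed than yours on the interchange of the $\gamma\to0+$ limit with the Fourier inversion (it simply invokes the approximation-of-identity property of $(\pi/\gamma)^{d/2}e^{-\pi^2|k-p|^2/\gamma}$), so your level of justification matches the original.
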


The expression (\ref{infinsum}) is of course valid for a broader class of $f$, but the stated result is sufficient for our purpose.

\begin{proof}
Without loss of generality, let us assume that $f$ is mean zero. By definition and properties of Fourier transformation, we have
$$
\nabla(-\Delta)^{-1}f(x)=-\sum_{k\in\mathbb{Z}^d,k\neq 0}e^{2\pi i k x}\frac{ik}{2\pi |k|^2}\hat{f}(k).
$$
To link this expression with (\ref{infinsum}), observe first that for a smooth $f$, a straightforward computation shows that
\begin{equation}\label{4}
-\sum_{k\in\mathbb{Z}^d, k\neq 0} e^{2\pi i k x}\frac{ik}{2\pi |k|^2}\hat{f}(k)=-\lim_{\gamma\rightarrow 0+}\int_{\mathbb{R}^d}e^{2\pi i p x}\frac{ip}{2\pi |p|^2}\int_{\mathbb{R}^d}e^{-2\pi i p y-\gamma |y|^2}f(y) dydp,
\end{equation}
where the function $f(y)$ is extended periodically to the whole plane. Indeed, all we need to do is plug in the Fourier expansion $f(y)=\sum_{k\in \mathbb{Z}^d}e^{2\pi i k y}\hat{f}(k)$, integrate in $y$, and observe that $(\pi/\gamma)^{d/2}\exp(-\pi^2|k-p|/\gamma)$ is an approximation of identity.

On the other hand, recall that the inverse Laplacian $(-\Delta)^{-1}g$ of a sufficiently regular and rapidly decaying function $g$ is given by
\begin{equation}\label{5th}
\int_{\mathbb{R}^d}e^{2\pi i p x}\frac{1}{(2\pi|p|)^2}\int_{\mathbb{R}^d}e^{-2\pi i p y}g(y)\,dydp=
\begin{cases}
\hfill -\frac{1}{2\pi}\int_{\mathbb{R}^d}\log|x-y|g(y)dy\hfill & d=2;\\
\hfill \frac{1}{c_d}\int_{\mathbb{R}^d}|x-y|^{2-d}g(y)dy\hfill & d\geq 3.
\end{cases}
\end{equation}

The expression on the right hand side of (\ref{4}), with help of (\ref{5th}), can be written as

\[
\mbox{Right hand side of }(\ref{4})=\lim_{\gamma\rightarrow 0+}\left\{
\begin{array}{ll}
\frac{1}{2\pi}\int_{\mathbb{R}^2}\log|x-y|\nabla \left(f(y)e^{-\gamma|y|^2}\right)dy\quad \ \ d=2;\\
-\frac{1}{c_d}\int_{\mathbb{R}^d}|x-y|^{2-d}\nabla \left(f(y)e^{-\gamma|y|^2}\right)dy\quad d\geq 3.
\end{array}
\right.
\]
Integrating by parts, we obtain (\ref{infinsum}).
\end{proof}

Suppose that the initial data $\rho_0$ is concentrated in a small ball $B_a$ of radius $a$ centered at the origin, so that $\int_{B_a}\rho_0\, dx=\int_{\mathbb{T}^2}\rho_0\, dx\equiv M$. Suppose that $\frac{1}{4}>b>2a$, $M>1$, and let $\phi$ be a cut-off function on scale $b$. Namely, assume that $\phi\in C^{\infty}(\mathbb{T}^2)$, $1\geq \phi(x)\geq 0$, $\phi=1$ on $B_b$, and $\phi=0$ on $B_{2b}^c$. The function $\phi$ can be chosen so that for any multi-index $\alpha\in \mathbb{Z}^2$, $|D^{\alpha}\phi|\leq Cb^{-|\alpha|}$. The parameters $a$, $M$ and $b$ will be chosen below. The local existence of smooth solution $\rho(x,t)$ can be proved by standard method, see e.g. \cite{kiselev2012biomixing} for a closely related argument in the $\mathbb{R}^2$ setting. It is straightforward to check using parabolic comparison principles that if $\rho_0\geq 0$, then $\rho(x,t)\geq 0$ for all $t\geq 0$. Also, we have $\int_{\mathbb{T}^2}\rho(x,t)\, dx=M,$ at least while $\rho(x,t)$ remain
 s smooth.

The first quantity we would like to consider is $\int_{\mathbb{T}^2}\rho(x,t)\phi(2x)\, dx$. We need an estimate showing that the mass cannot leave $B_b$ too quickly.
we note that the constants $C_k$ employed later in this section are not related to the constants $C_k$ in the previous sections.

\begin{lem}\label{2.3}
Suppose that $a$, $b$, $\phi$, $M$ and $\rho_0$ are as described above. Assume that the local solution $\rho(x,t)$ exists and remains regular in the time interval $[0,\tau]$. Then we have
\begin{equation}\label{6th}
\int_{\mathbb{T}^2}\rho(x,t)\phi(2x)\, dx\geq M-C_1M^2b^{-2}t
\end{equation}
for every $t\in [0,\tau]$.
\end{lem}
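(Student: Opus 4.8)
The plan is to differentiate the localized mass $\int_{\mathbb{T}^2}\rho(x,t)\phi(2x)\,dx$ in time and bound its derivative below by $-C_1M^2b^{-2}$; integrating this bound and evaluating the initial value then gives \eqref{6th}. Write $\psi(x)=\phi(2x)$, so that $\psi\equiv 1$ on $B_{b/2}$, $\psi\equiv 0$ outside $B_b$, with $\|\nabla\psi\|_{L^\infty}\leq Cb^{-1}$, $\|D^2\psi\|_{L^\infty}\leq Cb^{-2}$, and $\nabla\psi$ supported in the annulus $\{b/2\leq|x|\leq b\}$. Using \eqref{chemo} in the form $\partial_t\rho=\Delta\rho-\nabla\cdot(\rho\,v)$ with $v=\nabla(-\Delta)^{-1}(\rho-\bar{\rho})$, and integrating by parts on the torus (no boundary terms), I would obtain
\begin{equation*}
\frac{d}{dt}\int_{\mathbb{T}^2}\rho\,\psi\,dx=\int_{\mathbb{T}^2}\rho\,\Delta\psi\,dx+\int_{\mathbb{T}^2}\rho\,\nabla\psi\cdot v\,dx.
\end{equation*}
The diffusion term is immediate: since $\int_{\mathbb{T}^2}\rho=M$ and $\|\Delta\psi\|_{L^\infty}\leq Cb^{-2}$, it is bounded in absolute value by $Cb^{-2}M$.

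The heart of the matter is the chemotactic term, which I would control by exploiting the symmetry of the interaction kernel. Using the representation of Lemma \ref{2.2} (equivalently, writing $v(x)=\int_{\mathbb{T}^2}\nabla G(x-y)\rho(y)\,dy$ with the periodic Green's function $G(z)=-\tfrac{1}{2\pi}\log|z|+H(z)$, $H$ smooth), this term becomes a double integral against $\rho(x)\rho(y)$; symmetrizing in $x\leftrightarrow y$ and using that $\nabla G$ is odd, it equals
\begin{equation*}
\frac12\int_{\mathbb{T}^2}\int_{\mathbb{T}^2}\rho(x)\rho(y)\,\big(\nabla\psi(x)-\nabla\psi(y)\big)\cdot\nabla G(x-y)\,dx\,dy.
\end{equation*}
The decisive point is that the symmetrized kernel $\big(\nabla\psi(x)-\nabla\psi(y)\big)\cdot\nabla G(x-y)$ is uniformly bounded by $Cb^{-2}$: near the diagonal the mean value bound $|\nabla\psi(x)-\nabla\psi(y)|\leq\|D^2\psi\|_{L^\infty}|x-y|$ cancels the $|x-y|^{-1}$ singularity of $\nabla G$, leaving $\tfrac{1}{2\pi}\|D^2\psi\|_{L^\infty}\leq Cb^{-2}$, while away from the diagonal one uses $|\nabla\psi(x)-\nabla\psi(y)|\leq 2\|\nabla\psi\|_{L^\infty}$ together with the boundedness of the regular part $\nabla H$ and of $z/|z|^2$ off the origin. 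Since $\int\int\rho(x)\rho(y)\,dx\,dy=M^2$, the chemotactic term is therefore bounded in absolute value by $Cb^{-2}M^2$.

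Combining the two estimates and using $M>1$ to absorb the diffusion bound $Cb^{-2}M$ into $Cb^{-2}M^2$, I would conclude $\frac{d}{dt}\int\rho\,\psi\,dx\geq -C_1M^2b^{-2}$. Integrating from $0$ to $t$ and noting that $\psi\equiv 1$ on the support $B_a\subset B_{b/2}$ of $\rho_0$, so that $\int\rho_0\,\psi\,dx=M$, yields \eqref{6th}. The main obstacle is the rigorous treatment of the chemotactic double integral: one must justify the symmetrization and the exchange of integration order in the presence of the singular, periodically summed kernel. This is precisely where the regularization $e^{-\gamma|y|^2}$ and the smoothness of $\rho$ from Lemma \ref{2.2} are needed, and where one checks that the periodic images assemble into the smooth regular part $\nabla H$ of the Green's function; everything else reduces to the Hölder and mean value estimates described above.
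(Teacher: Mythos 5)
Your proposal is correct and shares the paper's core strategy --- differentiate the localized mass, bound the diffusion term by $CMb^{-2}$, and control the chemotactic term by symmetrizing against the odd interaction kernel --- but the technical route through the singular integral is genuinely different. The paper works with the $\mathbb{R}^2$ kernel $\frac{x-y}{|x-y|^2}$ via the Gaussian-regularized representation of Lemma \ref{2.2}; this forces a near/far splitting with an auxiliary cutoff, a separate estimate of the $\bar{\rho}$ contribution, symmetrization of the near part only, and a somewhat delicate treatment of the far part, whose convergence over all of $\mathbb{R}^2$ is obtained by writing $\rho-\bar{\rho}=\Delta g$ and integrating by parts twice. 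You instead work directly with the periodic Green's function $G(z)=-\frac{1}{2\pi}\log|z|+H(z)$, for which the $\bar{\rho}$ term drops out (since $\int_{\mathbb{T}^2}\nabla G(x-y)\,dy=0$), the kernel $\nabla G$ is absolutely integrable on the torus so Fubini and symmetrization require no regularization, and a single global bound $\abs{(\nabla\phi(2x)-\nabla\phi(2y))\cdot\nabla G(x-y)}\leq Cb^{-2}$ --- mean value theorem near the diagonal to cancel the $|x-y|^{-1}$ singularity, boundedness of $\nabla G$ off the diagonal --- finishes the job. Your route is cleaner provided one takes the decomposition $G=-\frac{1}{2\pi}\log|\cdot|+H$ with $H$ smooth near the origin as known; the paper's route avoids invoking the periodic Green's function but pays for it with the $\gamma$-regularization and the two-region analysis. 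One small remark: having chosen the periodic Green's function, you do not actually need the $e^{-\gamma|y|^2}$ regularization at all, since absolute convergence of $\iint\rho(x)\rho(y)\abs{\nabla G(x-y)}\,dx\,dy$ already justifies every exchange of integrals; the ``main obstacle'' you flag at the end is therefore much less of an obstacle in your setup than in the paper's.
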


Naturally, the bound (\ref{6th}) is only interesting if $t$ is sufficiently small.

\begin{proof}
We have
$$
\partial_t \int_{\mathbb{T}^2} \rho(x)\phi(2x)\, dx=\int_{\mathbb{T}^2}\Delta \rho(x)\phi(2x)\, dx-\int_{\mathbb{T}^2} \phi(2x)\nabla\cdot(\rho(x)\nabla(-\Delta)^{-1}(\rho(x)-\bar{\rho}))\, dx.
$$
First, using the periodic boundary conditions and integrating by parts, we find that
\begin{equation}\label{7th}
\left|\int_{\mathbb{T}^2} \Delta\rho(x)\phi(2x)\, dx\right|=4\left|\int_{\mathbb{T}^2}\rho(x)\Delta\phi(2x)\, dx\right|\leq CMb^{-2}.
\end{equation}
Next, let $\psi\in C^{\infty}_0(\mathbb{R}^2)$ be a cutoff function, $\psi(x)=1$ if $|x|\leq 1/2$, $\psi(x)=0$ if $|x|\geq 1$, $0\leq \psi(x)\leq 1$, $|\nabla\psi(x)|\leq C$. Using Lemma \ref{2.2}, we have

\begin{align*}
&\left|\int_{\mathbb{T}^2} \phi(2x)\nabla\cdot(\rho(x)\nabla(-\Delta)^{-1}(\rho(x)-\bar{\rho}))\, dx \right|\\
&=\frac{1}{\pi}\left|\int_{\mathbb{T}^2}\nabla \phi(2x)\rho(x,t)\lim_{\gamma\rightarrow 0+}\int_{\mathbb{R}^2}\frac{(x-y)}{|x-y|^2}(\rho(y,t)-\bar{\rho})e^{-\gamma|y|}\,dy dx\right|\\
&\leq C\left|\int_{\mathbb{T}^2}\nabla \phi(2x)\rho(x)\int_{\mathbb{T}^2}\frac{(x-y)}{|x-y|^2}(\rho(y)-\bar{\rho})\psi(y)\,dydx\right|\\
&+C\left|\int_{\mathbb{T}^2}\nabla \phi(2x)\rho(x)\lim_{\gamma\rightarrow 0+}\int_{\mathbb{R}^2}\frac{(x-y)}{|x-y|^2}(\rho(y)-\bar{\rho})e^{-\gamma |y|^2}(1-\psi(y))\,dydx\right|\\
&=C(I)+C(II).
\end{align*}
We passed to the limit $\gamma\rightarrow 0+$ in the first integral since the integral of the limit converges absolutely. Using symmetrization, we can estimate
\begin{align*}
(I)&\leq\bar{\rho}\left|\int_{\mathbb{R}^2}(\nabla\phi)(2x)\rho(x,t)\int_{\mathbb{R}^2}\frac{(x-y)}{|x-y|^2}\psi(y)\,dydx\right|\\
&+\int_{\mathbb{R}^2}\int_{\mathbb{R}^2} \rho(x)\rho(y)\left|\left[\frac{(x-y)\cdot(\nabla\phi(2x)\psi(y)-\nabla \phi(2y)\psi(x))}{|x-y|^2}\right]\right|\,dxdy\\
&\leq CM^2b^{-1}+ \int_{B_1}\int_{B_1}\rho(x,t)\rho(y,t)(\|\nabla^2\phi\|_{L^{\infty}}+\|\nabla\phi\|_{L^{\infty}}\|\nabla\psi\|_{L^{\infty}})\,dxdy\\
&\leq CM^2 b^{-2}.
\end{align*}
We use the fact that supp$\phi\subset$ supp$\psi\subset B_1$ in the second step.

Next let us estimate $(II)$. Note that in this case the kernel is not singular, since supp$\phi\subset B_{2b}$ while supp$(1-\psi)\subset B_1^c$. However, there is an issue of convergence of $y$ integral over the infinite region. Suppose that $x\in B_{2b}$. Define mean zero function $g$ via $\rho(y)-\bar{\rho}=\Delta g$. In fact, we have $\hat{g}(k)=-\hat{\rho}(k)(2\pi |k|^2)^{-1}$ for $k\neq 0$. By working on the Fourier side, it is easy to show that $\|g\|_{L^1(\mathbb{T}^2)}\leq \|g\|_{L^2(\mathbb{T}^2)}\leq C\|\rho\|_{L^1(\mathbb{T}^2)}$. Now we can estimate
\begin{equation}\label{8th}
\begin{split}
&\lim_{\gamma\rightarrow 0+}\int_{\mathbb{R}^2}\frac{(x-y)}{|x-y|^2}(\rho(y,t)-\bar{\rho})e^{-\gamma|y|^2}(1-\psi(y))\,dxdy\\
&=\lim_{\gamma\rightarrow 0+}\int_{\mathbb{R}^2}\frac{(x-y)}{|x-y|^2}\Delta g(y,t)e^{-\gamma|y|^2}(1-\psi(y))\,dxdy\\
&=\lim_{\gamma\rightarrow 0+}\int_{\mathbb{R}^2}g(y,t)\Bigg( \Delta\left(\frac{(x-y)}{|x-y|^2}\right)e^{-\gamma|y|^2}(1-\psi(y))\\
&+2\nabla \left(\frac{(x-y)}{|x-y|^2}\right)\nabla\left(e^{-\gamma|y|^2}(1-\psi(y))\right)+\frac{(x-y)}{|x-y|^2}\Delta\left(e^{-\gamma|y|^2}(1-\psi(y))\right)\Bigg)\,dxdy.
\end{split}
\end{equation}
Here $g(y,t)$ is extended periodically to the whole $\mathbb{R}^2$. Note that in the first summand in the last integral in (\ref{8th}) we can pass to the limit as $\gamma\rightarrow 0$ since $\Delta \left(\frac{(x-y)}{|x-y|^2}\right)$ decays sufficiently fast. For every $x\in B_b$, we obtain an integral which is bounded by $C\|g\|_{L^1}\sum_{n\in \mathbb{Z}^2,|n|>0}|n|^{-3}\leq CM$. It is straightforward to estimate that the last two summands in (\ref{8th}) are bounded by
\begin{align*}
C\int_{B_{1/2}^c}|g(y,t)|(\gamma|y|^{-1}+\gamma^2|y|)e^{-\gamma|y|^2}(1-\psi(y))\,dy\leq\\
C\|g\|_{L^1(\mathbb{T}^2)}\sum_{n\in \mathbb{Z}^2, |n|>0}(\gamma|n|^{-1}+\gamma^2|n|)e^{-\gamma|n|^2}\leq C\|g\|_{L^1(\mathbb{T}^2)}\gamma^{1/2}\xrightarrow{\gamma\rightarrow 0} 0.
\end{align*}
Combining these estimates, we see that
$$
(II)\leq CM\int_{\mathbb{R}^2}(\nabla \phi)(2x)\rho(x,t)\,dx\leq CM^2b^{-1}.
$$
Therefore, for all times where smooth solution is still defined, and under our assumptions on values of parameters, we have
$$
\left|\partial_t\int_{\mathbb{T}^2}\rho(x,t)\phi(2x)\,dx\right|\leq CM^2b^{-2}. 
$$
This implies (\ref{6th}) and finishes the proof of the lemma.
\end{proof}

Let us now consider the second moment $\int_{\mathbb{T}^2}|x|^2\rho(x,t)\phi(x)\,dx$. Closely related quantities are well-known tools to establish finite time blow up in Keller-Segel equation; see e.g. \cite{perthame2006transport}, \cite{nagai2001blowup}. We have the following lemma.

\begin{lem}\label{2.4}
Suppose $1/4\geq b>0$ and $\phi$ is a cutoff function on scale $b$ as described above. Let $\rho_0\in C^{\infty}(\mathbb{T}^2)$, and assume that the unique local smooth solution $\rho(x,t)$ to (\ref{chemo}) set on $\mathbb{T}^2$ is defined on $[0,T]$. Then for every $t\in[0,T]$ we have
\begin{equation}\label{9th}
\partial_t \int_{\mathbb{T}^2} |x|^2 \rho(x,t) \phi(x)\, dx\leq -\frac{1}{2\pi}\left(\int_{\mathbb{T}^2} \rho(x)\phi(x)\, dx\right)^2+C_2M\|\rho\|_{L^1(\mathbb{T}^2\setminus B_b)}+C_3bM^2+C_4M. 
\end{equation}
\end{lem}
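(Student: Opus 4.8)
The plan is to differentiate the moment and insert equation~(\ref{chemo}). Writing $w(x)=|x|^2\phi(x)$ and integrating the diffusion and chemotactic terms by parts, I would obtain
\[
\frac{d}{dt}\int_{\mathbb{T}^2}|x|^2\rho\phi\,dx=\int_{\mathbb{T}^2}\rho\,\Delta w\,dx+\int_{\mathbb{T}^2}\rho\,\nabla(-\Delta)^{-1}(\rho-\bar{\rho})\cdot\nabla w\,dx.
\]
The diffusion term is the easy one: since $\Delta w=4\phi+4x\cdot\nabla\phi+|x|^2\Delta\phi$ equals $4$ on $B_b$ and is bounded by a universal constant on the annulus $B_{2b}\setminus B_b$ (using $|\nabla\phi|\leq Cb^{-1}$, $|\Delta\phi|\leq Cb^{-2}$, and $|x|\leq 2b$ there), it contributes at most $4M+C\|\rho\|_{L^1(\mathbb{T}^2\setminus B_b)}$, which is absorbed into the $C_4M$ and $C_2M\|\rho\|_{L^1(\mathbb{T}^2\setminus B_b)}$ terms.

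For the chemotactic term I would use Lemma~\ref{2.2} to represent $\nabla(-\Delta)^{-1}(\rho-\bar{\rho})$ as a singular integral, and then follow the near/far splitting of Lemma~\ref{2.3} with the cutoff $\psi$. The heart of the matter is the near, $\rho(y)$-part of the double integral, which after symmetrizing in $x\leftrightarrow y$ (using antisymmetry of $\tfrac{x-y}{|x-y|^2}$) has integrand $\tfrac12[\nabla w(x)-\nabla w(y)]\cdot\tfrac{x-y}{|x-y|^2}$. Writing $\nabla w=2x\phi+|x|^2\nabla\phi$ and using the algebraic identity
\[
2x\phi(x)-2y\phi(y)=2\phi(x)\phi(y)(x-y)+2x\phi(x)(1-\phi(y))-2y\phi(y)(1-\phi(x)),
\]
the first summand contributes $2\phi(x)\phi(y)$ after pairing with $\tfrac{x-y}{|x-y|^2}$, and hence produces exactly the leading term $-\frac{1}{2\pi}\bigl(\int_{\mathbb{T}^2}\rho\phi\bigr)^2$ with the constant $c_2=2\pi$ from Lemma~\ref{2.2}.

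The main work, and the place I expect the real difficulty, is to show that all the remaining pieces are genuinely lower order. The remainder kernels from the last two summands above, as well as from the $|x|^2\nabla\phi$ part of $\nabla w$, all vanish on the diagonal $x=y$; a mean value estimate then bounds each of them by $|x-y|$ times a supremum of a gradient in which every factor $|\nabla\phi|\leq Cb^{-1}$ or $|\Delta\phi|\leq Cb^{-2}$ is compensated by $|x|\leq 2b$ or $|x|^2\leq 4b^2$ on the support. This is the key observation that makes the remainder kernels \emph{bounded} by a universal constant rather than singular. Since they are supported where at least one variable lies outside $B_b$, they contribute at most $CM\|\rho\|_{L^1(\mathbb{T}^2\setminus B_b)}$; the same support bookkeeping converts the computed $-\frac{1}{2\pi}\bigl(\int_{B_b}\rho\bigr)^2$ into $-\frac{1}{2\pi}\bigl(\int\rho\phi\bigr)^2$ up to an error of this same type.

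Finally, the remaining terms give the last two error contributions. The $\bar{\rho}$-part of the near integral is a single integral of $\rho$ against the bounded field $\int\tfrac{x-y}{|x-y|^2}\psi(y)\,dy$; since $|\nabla w|\leq Cb$ on its support and $\bar{\rho}=M$, it is bounded by $CbM^2$, which is precisely the origin of the $C_3bM^2$ term. The far part is controlled exactly as in Lemma~\ref{2.3}, by introducing the mean-zero $g$ with $\Delta g=\rho-\bar{\rho}$ and $\|g\|_{L^1}\leq CM$ and integrating by parts twice, producing again only $CbM^2$ and $CM$ contributions. Collecting the leading term with these bounds yields~(\ref{9th}). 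The one delicate bookkeeping point throughout is that the periodic extension in Lemma~\ref{2.2} forces the $y$-integration over all of $\mathbb{R}^2$, so the symmetrization must be organized exactly as in Lemma~\ref{2.3} to make the $x\leftrightarrow y$ swap legitimate.
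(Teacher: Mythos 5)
Your proposal is correct and follows essentially the same route as the paper: the same near/far splitting via the cutoff $\psi$, the same symmetrization in $x\leftrightarrow y$ exploiting antisymmetry of the kernel, the extraction of the leading term $-\frac{1}{2\pi}\bigl(\int\rho\phi\bigr)^2$ from the $\phi(x)\phi(y)$ piece, the bound $|F(x,y)|\leq C|x-y|$ on the remainder kernels vanishing on the diagonal and on $B_b\times B_b$, and the $\Delta g=\rho-\bar\rho$ trick for the far field. The only cosmetic difference is that you organize the computation around $w=|x|^2\phi$ and the single decomposition of $\nabla w$, whereas the paper treats the $x\phi(x)$ and $|x|^2\nabla\phi(x)$ contributions as two separate terms $(ii)$ and $(iii)$ from the outset.
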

\begin{proof}
In the estimate below, we will use the formula (\ref{infinsum}) with $\gamma$ set to be zero. All the estimates can be done completely rigorously similar to the proof of Lemma \ref{2.3}; we will proceed with the formal computation to reduce repetitive technicalities.

We have

\begin{align*}
\partial_t \int_{\mathbb{T}^2}|x|^2\rho(x,t)\phi(x)\, dx&=\int_{\mathbb{T}^2} |x|^2\Delta \rho(x)\phi(x)\, dx \\ 
&+\int_{\mathbb{T}^2}|x|^2\phi\nabla\cdot(\rho\nabla(-\Delta)^{-1}(\rho-\bar{\rho}))\, dx\\
&=4\int_{\mathbb{T}^2}\phi\rho \, dx+\int_{\mathbb{T}^2}|x|^2\Delta\phi\rho \, dx+4\int_{\mathbb{T}^2} (x\cdot \nabla \phi)\rho \, dx\\
&-\frac{1}{\pi}\int_{\mathbb{T}^2} \phi(x)\rho(x)\int_{\mathbb{R}^2}\frac{x(x-y)}{|x-y|^2}(\rho(y)-\bar{\rho})\,dydx\\
&-\frac{1}{2\pi}\int_{\mathbb{T}^2}|x|^2\rho(x)\int_{\mathbb{R}^2}\frac{\nabla\phi(x)\cdot(x-y)}{|x-y|^2}(\rho(y)-\bar{\rho})\,dxdy.\\
&\equiv(i)+(ii)+(iii).
\end{align*}
Here $(i)$ denotes the first three terms. By our choice of $\phi$, $(i)$ does not exceed $C_4M$ for some constant $C_4$. Next, let us write
\begin{equation}\label{10th}
\begin{split}
(ii)=&-\frac{1}{\pi}\int_{\mathbb{T}^2} \phi(x)\rho(x,t)\int_{\mathbb{R}^2}\frac{x(x-y)}{|x-y|^2}(\rho(y,t)-\bar{\rho})\psi(y)\,dydx\\
&-\frac{1}{\pi}\int_{\mathbb{T}^2} \phi(x)\rho(x,t)\int_{\mathbb{R}^2}\frac{x(x-y)}{|x-y|^2}(\rho(y,t)-\bar{\rho})(1-\psi(y))\,dydx,
\end{split}
\end{equation}
where $\psi$ is a cutoff function as in Lemma \ref{2.3}. The absolute value of the integral
$$
\int_{\mathbb{T}^2} \phi(x)\rho(x,t)\int_{\mathbb{R}^2}\frac{x(x-y)}{|x-y|^2}(\rho(y,t)-\bar{\rho})(1-\psi(y))\,dydx
$$
can be controlled similarly to the estimates applied in bounding the term $(II)$ in the proof of Lemma \ref{2.3}, leading to an upper bound by $CM^2b$. Next, we can estimate
$$
\left|\bar{\rho}\int_{\mathbb{T}^2}\phi(x)\rho(x,t)\int_{\mathbb{R}^2}\frac{x(x-y)}{|x-y|^2}\psi(y)\,dxdy\right|\leq CM^2b
$$
as well. Split the remaining part of the first integral in (\ref{10th}) into two parts:
\begin{align*}
&-\frac{1}{\pi}\int_{\mathbb{T}^2}\phi(x)\rho(x,t)\int_{\mathbb{R}^2}\frac{x(x-y)}{|x-y|^2}\rho(y,t)\phi(y)\,dxdy\\
&-\frac{1}{\pi}\int_{\mathbb{T}^2}\phi(x)\rho(x,t)\int_{\mathbb{R}^2}\frac{x(x-y)}{|x-y|^2}\rho(y,t)(1-\phi(y))\psi(y)\,dxdy
\end{align*}
Using symmetrization, we obtain

\begin{align*}
&-\frac{1}{\pi}\int_{\mathbb{T}^2} \phi(x)\rho(x)\int_{\mathbb{R}^2}\frac{x(x-y)}{|x-y|^2}\rho(y)\phi(y)\,dydx\\
&=-\frac{1}{2\pi}\int_{\mathbb{T}^2}\int_{\mathbb{T}^2}\phi(x)\rho(x)\phi(y)\rho(y)\,dxdy=-\frac{1}{2\pi}\left(\int_{\mathbb{T}^2} \rho(x)\phi(x)\,dx\right)^2.
\end{align*}
On the other hand, 
\begin{align*}
&\int_{\mathbb{T}^2}\phi(x)\rho(x,t)\int_{\mathbb{T}^2}\frac{x(x-y)}{|x-y|^2}\rho(y,t)(1-\phi(y))\psi(y)\,dxdy\\
&=\frac{1}{2}\int_{\mathbb{R}^2}\int_{\mathbb{R}^2} \frac{\rho(x,t)\rho(y,t)}{|x-y|^2}(x-y)\cdot[x\phi(x)(1-\phi(y))\psi(y)-y\phi(y)(1-\phi(x))\psi(x)]\,dxdy.\\
\end{align*}
Let us define $F(x,y)=x\phi(x)(1-\phi(y))\psi(y)-y\phi(y)(1-\phi(x))\psi(x)$. Observe that $F(x,y)=0$ on $B_b\times B_b$, $F(x,x)=0$ and $|\nabla F(x,y)|\leq C$ for all $x,y$. This means
\begin{align*}
|F(x,y)|=|F(x,y)-F(x,x)|&\leq \|\nabla F\|_{L^{\infty}}|x-y|\chi_{B_1\times B_1\setminus B_b\times B_b}(x,y)\\
&\leq C|x-y|\chi_{B_1\times B_1\setminus B_b\times B_b}(x,y),
\end{align*}
where $\chi_{S}(x,y)$ denotes the characteristic function of a set $S\subset \mathbb{R}^2\times \mathbb{R}^2$. Therefore,
\begin{align*}
&\int_{\mathbb{T}^2}\phi(x)\rho(x,t)\int_{\mathbb{R}^2}\frac{x(x-y)}{|x-y|^2}\rho(y)(1-\phi(y))\psi(y)\,dxdy\\
&\leq C\int\int_{B_1\times B_1\setminus B_b\times B_b}\rho(x,t)\rho(y,t)\,dxdy\\
&\leq CM\|\rho\|_{L^1(D\setminus B_b(0))}.
\end{align*}


To summarize, $(ii)$ can be bounded above by
$$
-\frac{1}{2\pi}\left(\int_{\mathbb{T}^2}\rho(x,t)\phi(x)\,dx\right)^2+CM\|\rho(\cdot, t)\|_{L^1(\mathbb{T}^2\setminus B_b)}+CbM^2.
$$

Finally, let us estimate $(iii)$. Similarly to the previous part, we have
$$
\int_{\mathbb{T}^2}|x|^2\rho(x,t)\int_{\mathbb{R}^2}\frac{\nabla\phi(x)\cdot (x-y)}{|x-y|^2}(\rho(y,t)-\bar{\rho})(1-\psi(y))\,dxdy\leq CbM^2.
$$
Also,
$$
\bar{\rho}\int_{\mathbb{T}^2}|x|^2\rho(x,t)\int_{\mathbb{R}^2}\frac{\nabla\phi(x)\cdot (x-y)}{|x-y|^2}(1-\psi(y))\,dxdy\leq CbM^2
$$
as well. The remaining part of $(iii)$ we can estimate by using symmetrization:
\begin{align*}
&\int_{\mathbb{R}^2}|x|^2 \rho(x,t)\int_{\mathbb{R}^2}\frac{\nabla\phi(x)\cdot (x-y)}{|x-y|^2}\rho(y,t)\psi(y)\,dxdy\\
&=\frac{1}{2}\int_{\mathbb{R}^2}\int_{\mathbb{R}^2}\rho(x,t)\rho(y,t)\left(\frac{(x-y)\cdot (\nabla \phi(x)\psi(y)|x|^2-\nabla \phi(y)\psi(x)|y|^2)}{|x-y|^2}\right)\,dxdy.
\end{align*}
Observe that
\begin{align*}
||x|^2\nabla\phi(x)\psi(y)-|y|^2\nabla\phi(y)\psi(x)|\leq C\chi_{B_{2b}\times B_{2b}\setminus B_b\times B_b}|x-y|.
\end{align*}
Therefore 
$$
(iii)\leq CbM^2+CM\|\rho\|_{L^1(\mathbb{T}^2\setminus B_b)}.
$$
Combining the estimate of $(i)$, $(ii)$ and $(iii)$ yields (\ref{9th}), proving the lemma.
\end{proof}

We are now ready to complete the proof of Theorem \ref{blowupt}.
\begin{proof}[Proof of Theorem \ref{blowupt}]
Let us recall that we assume $1/4\geq b \geq 2a$, and the initial data $\rho_0$ is supported inside $B_a$. Assume that the unique solution $\rho(x,t)$ of (\ref{chemo}) set on $\mathbb{T}^2$ remains smooth for all $t$. Then by Lemma \ref{2.3}, and conservation of mass, for all $t\geq 0$ we have
$$
\|\rho(\cdot, t)\|_{L^1(\mathbb{T}^2\setminus B_b)}\leq M-\int_{\mathbb{T}^2}\rho(x,t)\phi(2x)\,dx\leq C_1M^2 b^{-2}t. 
$$
Also,
$$
\int_{\mathbb{T}^2}\rho(x,t)\phi(x)dx\geq \int_{\mathbb{T}^2}\rho(x,t)\phi(2x)\,dx\geq M-C_1M^2b^{-2}t. 
$$
Therefore, by Lemma \ref{2.4}, we have that
\begin{align*}
\partial_t \int_{\mathbb{T}^2} |x|^2 \rho(x,t) \phi(x)\,dx\leq -\frac{1}{2\pi}(M-C_1M^2b^{-2}t)^2+C_2M^3b^{-2}t\\
+C_3M^2b+C_4M
\end{align*}
for all $0\leq t\leq \frac{b^2}{C_1M}$. We will now make the choice of all our parameters.\\
\noindent
1. Choose $b$ so that $C_3b\leq 0.001$.\\
\noindent
2. Choose $M$ so that $M\geq 1000 C_4$.\\
\noindent
3. Choose $a$ so that the following three inequalities hold:
$$
a\leq b/2, \,\,\, a \leq \frac{b}{10\sqrt{2C_1}}, \mbox{ and } a\leq \frac{b}{100\sqrt{C_2}}.
$$
\noindent
4. Choose the time $\tau=\frac{100a^2}{M}$.

With such choice of parameters, it is straightforward to check that
\begin{equation}\label{11th}
\partial_t \int_{\mathbb{T}^2} |x|^2 \rho(x,t) \phi(x)\,dx\leq -\frac{M^2}{50}
\end{equation}
for every $t\in[0,\tau]$. But by assumption, supp$\rho_0\subset B_a$, and so
\begin{equation}\label{12th}
\int_{\mathbb{T}^2} |x|^2 \rho_0(x) \phi(x)\,dx\leq a^2 M.
\end{equation}
Together, (\ref{11th}), (\ref{12th}) and our choice of $\tau$ imply that $\int_{\mathbb{T}^2}|x|^2\rho(x,\tau)\phi(x)\,dx$ must be negative. This is a contradiction with the assumption that $\rho(x,t)$ stays smooth throughout $[0,\tau]$.
\end{proof}

In fact, it is not hard to verify that finite time blow up persists if we add an advection term to the Keller-Segel equation, but change the order of the selection of $u$ and $\rho_0$ compared with the results
on suppression of chemotactic explosion. Namely, the following theorem holds. 
\begin{thm}\label{buu}
Consider the Keller-Segel equation \eqref{chemo1} set on $\Tm^2.$ Suppose that $u(x,t) \in C^\infty(\Tm^2 \times [0,\infty))$ is incompressible. Then there exists $\rho_0 \in C^\infty(\Tm^2),$ $\rho_0 \geq 0,$ such that
the corresponding solution $\rho(x,t)$ of \eqref{chemo1} blows up in finite time.
\end{thm}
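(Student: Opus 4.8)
The plan is to adapt the finite-time blow-up construction of Theorem~\ref{blowupt}, which is purely local and driven by the concentration of a large mass $M$ in a ball $B_a$ of radius $a\ll b$, carefully tracking the additional contributions produced by the advection term $(u\cdot\nabla)\rho$. Since the flow $u$ is now prescribed \emph{before} the initial data, the conceptual point is that the chemotactic self-attraction of a highly concentrated density scales like $M^2$ at the relevant small scale, whereas transport by a fixed smooth flow contributes only at order $M$; concentrating enough mass therefore defeats any given incompressible flow.

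First I would fix $u$ and set $V\equiv\sup_{t\in[0,1]}\|u(\cdot,t)\|_{L^\infty(\mathbb{T}^2)}$, which is finite since $u\in C^\infty$. The blow-up time $\tau$ constructed below will be far smaller than $1$, so this horizon is harmless; moreover local existence, nonnegativity, and conservation of mass for \eqref{chemo1} all persist in the presence of a smooth incompressible drift by the standard arguments. Then I would revisit the two key lemmas. In Lemma~\ref{2.3}, differentiating $\int_{\mathbb{T}^2}\rho\,\phi(2x)\,dx$ now produces the extra term $-\int_{\mathbb{T}^2}\phi(2x)(u\cdot\nabla)\rho\,dx=\int_{\mathbb{T}^2}\rho\,u\cdot\nabla(\phi(2x))\,dx$ after integrating by parts using $\nabla\cdot u=0$; since $\nabla(\phi(2x))=2(\nabla\phi)(2x)$ is supported in $B_{2b}$ with $|(\nabla\phi)(2x)|\le Cb^{-1}$, this is bounded by $CVMb^{-1}$, so the right-hand side of \eqref{6th} is weakened to $M-(C_1M^2b^{-2}+CVMb^{-1})t$. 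Likewise, in Lemma~\ref{2.4} the advective contribution is $\int_{\mathbb{T}^2}\rho\,u\cdot\nabla(|x|^2\phi)\,dx$; on $\supp\phi\subset B_{2b}$ one has $|\nabla(|x|^2\phi)|\le Cb$, so this adds at most $CVbM$ to the right-hand side of \eqref{9th}.

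Next I would combine these modified estimates exactly as in the proof of Theorem~\ref{blowupt}. On $t\le\tau=100a^2/M$ the modified Lemma~\ref{2.3} still yields $\|\rho(\cdot,t)\|_{L^1(\mathbb{T}^2\setminus B_b)}\le(C_1M^2b^{-2}+CVMb^{-1})t$, and the second moment obeys
\[
\partial_t\int_{\mathbb{T}^2}|x|^2\rho\,\phi\,dx\le-\frac{1}{2\pi}\big(M-(C_1M^2b^{-2}+CVMb^{-1})t\big)^2+C_2M\|\rho\|_{L^1(\mathbb{T}^2\setminus B_b)}+C_3bM^2+C_4M+CVbM.
\]
The crucial bookkeeping is that every advection-induced term is of strictly lower order in $M$ than the dominant $-M^2/(2\pi)$: the term $CVbM$ is $O(M)$, while the new contributions to $C_2M\|\rho\|_{L^1}$ are $O(M^2a^2b^{-2})$ and $O(Ma^2b^{-1})$. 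I would therefore choose the parameters in the order $b,M,a,\tau$, now allowing $M$ to depend on $V$: first $b\le1/4$ with $C_3b\le0.001$; then $M$ large enough that both $M\ge1000C_4$ and $CVb\le M/1000$ hold (this last inequality, controlling the transport term by the concentration, is the one genuinely new requirement); then $a$ small enough relative to $b$, as in the original proof, so that the $\|\rho\|_{L^1(\mathbb{T}^2\setminus B_b)}$ contributions are negligible; and finally $\tau=100a^2/M\le1$. With these choices the right-hand side stays below $-M^2/50$ on $[0,\tau]$, reproducing \eqref{11th}, while $\supp\rho_0\subset B_a$ still gives $\int_{\mathbb{T}^2}|x|^2\rho_0\,\phi\,dx\le a^2M$ as in \eqref{12th}; together these force $\int_{\mathbb{T}^2}|x|^2\rho(\cdot,\tau)\phi\,dx<0$, the desired contradiction with smoothness up to time $\tau$.

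The main obstacle, such as it is, lies in the bookkeeping of the previous paragraph: one must verify that all advective corrections are subdominant in $M$ and that the parameter selection stays consistent once $M$ is permitted to depend on $V$. There is no genuine analytic difficulty beyond the estimates already established, precisely \emph{because} the chemotactic term is supercritical — its $L^1$-scaling makes the self-attraction at scale $a$ grow like $M^2$, outpacing any fixed flow. I would note finally that the argument only uses an a priori bound on $\|u\|_{L^\infty}$ over a fixed short time, so smoothness of $u$ is used merely to guarantee finiteness of $V$ and local well-posedness.
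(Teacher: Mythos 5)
Your proposal is correct and follows exactly the route the paper intends: the paper itself omits the proof of Theorem~\ref{buu}, stating only that it ``closely follows that of Theorem~\ref{blowupt}'' with the advection term ``easily estimated'' and ``a few adjustments of constants,'' and your writeup supplies precisely those details. Your bounds $CVMb^{-1}$ and $CVbM$ for the advective corrections to Lemmas~\ref{2.3} and~\ref{2.4} (via integration by parts using $\nabla\cdot u=0$), and the observation that these are subdominant in $M$ so that choosing $M$ large depending on $V=\sup_t\|u(\cdot,t)\|_{L^\infty}$ closes the argument, are exactly the intended adjustments.
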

The proof of Theorem~\ref{buu} closely follows that of Theorem~\ref{blowupt}. The advection term can be easily estimated and the proof requires only a few adjustments of constants. We omit the details.

\section{Appendix II: Some inequalities}

Here we first prove Proposition \ref{7.5}, and then sketch the proofs of the multiplicative inequalities (\ref{14th}) and (\ref{16th}).

Let $\rho(x,t)$ be a solution of (\ref{chemo1})
$$
\partial_t\rho+(u\cdot\nabla) \rho -\Delta \rho + \nabla\cdot(\rho \nabla (-\Delta)^{-1}(\rho-\bar{\rho}))=0,\quad \rho(x,0)=\rho_0(x),
$$
with some smooth incompressible vector field $u(x,t)$.
\begin{prop}\label{9.1}
Let $\rho_0\in C^{\infty}(\mathbb{T}^2)$. Suppose that $\|\rho(\cdot, t)-\bar{\rho}\|_{L^2}\leq 2B$ for all $t\in [0,T]$ and some $B\geq 1$. Then we also have $\|\rho(\cdot, t)-\bar{\rho}\|_{L^{\infty}}\leq C_4B\max(B,\bar{\rho}^{1/2})$ for some universal constant $C_4$ and all $t\in [0,T]$.
\end{prop}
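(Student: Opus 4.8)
The plan is to control $\|\rho-\bar{\rho}\|_{L^\infty}$ by a Moser-type iteration of $L^p$ energy estimates for the fluctuation $w\defeq\rho-\bar{\rho}$, letting $p\to\infty$. The decisive structural feature is that advection contributes nothing to any such estimate: using the multiplier $|w|^{p-2}w$, one has $\int_{\mathbb{T}^2}(u\cdot\nabla w)\,|w|^{p-2}w\,dx=\frac1p\int_{\mathbb{T}^2}u\cdot\nabla(|w|^p)\,dx=-\frac1p\int_{\mathbb{T}^2}(\nabla\cdot u)\,|w|^p\,dx=0$ by incompressibility, so $u$ disappears entirely. This is exactly why the resulting bound is independent of $u$, and it is the reason one estimates $\|w\|_{L^\infty}$ directly rather than through \eqref{17th}, whose right-hand side carries the unwanted factor $\|u\|_{C^s}$. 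It is essential to iterate on $w$ rather than on $\rho$, so that the base case of the iteration is $\|w\|_{L^2}\le 2B$ and the background mass $\bar{\rho}$ enters only as a lower-order forcing.

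First I would derive the basic inequality. Multiplying \eqref{chemo1} by $|w|^{p-2}w$, integrating, and using that $w$ is mean zero so $\Delta(-\Delta)^{-1}w=-w$, the diffusion term yields the dissipation $-\tfrac{4(p-1)}{p}\|\nabla V\|_{L^2}^2$ with $V\defeq|w|^{p/2}$, while the chemotactic term splits into a cubic part $(p-1)\int_{\mathbb{T}^2}|w|^p w\,dx$ and a lower-order part $p\bar{\rho}\|w\|_{L^p}^p$. This gives
\[ \frac{d}{dt}\|w\|_{L^p}^p+\frac{4(p-1)}{p}\|\nabla V\|_{L^2}^2\le (p-1)\int_{\mathbb{T}^2}|w|^{p+1}\,dx+p\bar{\rho}\,\|w\|_{L^p}^p. \]
By H\"older and the hypothesis $\|w\|_{L^2}\le 2B$ one has $\int_{\mathbb{T}^2}|w|^{p+1}\,dx\le\|w\|_{L^2}\|V\|_{L^4}^2\le 2B\|V\|_{L^4}^2$, and the two-dimensional Gagliardo--Nirenberg inequality \eqref{14th} (applied to $V-\bar V$, the mean producing only lower-order terms) bounds $\|V\|_{L^4}^2$ in terms of $\|V\|_{L^2}$ and $\|\nabla V\|_{L^2}$.

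Then I would close the iteration. Evaluating the differential inequality at a time where $\|w(\cdot,t)\|_{L^p}$ attains its maximum, the derivative is nonnegative, so the dissipation is dominated by the right-hand side; feeding in the Gagliardo--Nirenberg bound and solving the resulting quadratic for $\|\nabla V\|_{L^2}$ shows $\|\nabla V\|_{L^2}\lesssim (Bp+\sqrt{p\bar{\rho}})\|V\|_{L^2}$. Inserting this into the two-dimensional Nash inequality \eqref{16th} (the case $s=0$, in which $\|V\|_{L^1}=\|w\|_{L^{p/2}}^{p/2}$ appears) and solving another quadratic yields, in terms of the time suprema, the recursion $\sup_t\|w\|_{L^p}\le\bigl[C(Bp+\sqrt{p\bar{\rho}})\bigr]^{2/p}\sup_t\|w\|_{L^{p/2}}$. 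Iterating along $p=2^k$ and using $\sum_k 2^{1-k}=1$ together with the convergence of $\sum_k k\,2^{-k}$, the prefactors telescope: the numerical constants and powers of $2$ collapse to a universal constant, while $Bp+\sqrt{p\bar{\rho}}\le 2^k(B+\sqrt{\bar{\rho}})$ contributes $(B+\sqrt{\bar{\rho}})$ to the first power. With the base case $\sup_t\|w\|_{L^2}\le 2B$ this produces $\|w\|_{L^\infty}\le C_4 B\,(B+\sqrt{\bar{\rho}})\le 2C_4 B\max(B,\bar{\rho}^{1/2})$, the two regimes of the maximum reflecting whether the cubic nonlinearity or the background mass dominates.

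The hard part is the constant bookkeeping rather than any single estimate. A crude use of Young's inequality to absorb the gradient into the dissipation at each stage costs an extra power of $B$ and, more seriously, degrades $\bar{\rho}^{1/2}$ to $\bar{\rho}$; to reach the sharp exponents one must keep $\|\nabla V\|_{L^2}$ at first power and combine it with Nash's inequality only at the maximum of $\|w\|_{L^p}$, and one must let $\bar{\rho}$ enter solely through the zeroth-order term $p\bar{\rho}\|w\|_{L^p}^p$, which is what yields $\bar{\rho}^{1/2}$ with exponent one after telescoping. A secondary point is the behaviour near $t=0$: the super-linear dissipation makes the estimate of absorbing type, so any large $\|w\|_{L^p}$ relaxes below the level set by $\sup_t\|w\|_{L^{p/2}}$ after a short transient, independently of the uncontrolled $L^\infty$ size of $\rho_0$; this is precisely the form in which the proposition is invoked, at a positive time at which the solution is already smooth and the hypothesis $\|w\|_{L^2}\le 2B$ holds throughout.
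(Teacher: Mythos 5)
Your proposal is correct and follows essentially the same route as the paper's own proof of Proposition \ref{9.1}: a Moser-type iteration along dyadic exponents $p=2^{k}$ based on the $L^{p}$ energy identity (the paper's \eqref{64th}), the two-dimensional Gagliardo--Nirenberg inequality, and the Nash inequality \eqref{68th}, closed by an absorbing-threshold argument (the paper phrases it as the right-hand side of \eqref{69th} turning negative above a level $\Upsilon_{n+1}$, you phrase it as evaluating at the temporal maximum; these are the same mechanism, and your telescoping of $Bp+\sqrt{p\bar\rho}\le 2^{k}(B+\sqrt{\bar\rho})$ reproduces the paper's recursion \eqref{70th}--\eqref{71th} and the two regimes of $\max(B,\bar\rho^{1/2})$). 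Your closing remark about the transient near $t=0$ is a fair caveat that applies equally to the paper's version of the argument.
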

\begin{proof}
Observe first that by a direct computation, for every integer $p\geq 1$ we have
\begin{equation}\label{64th}
\begin{split}
\partial_t \int_{\mathbb{T}^2}(\rho-\bar{\rho})^{2p}\,dx=-(4-\frac{2}{p})\int_{\mathbb{T}^2}|\nabla ((\rho-\bar{\rho})^p)|^2\,dx\\
+(2p-1)\int_{\mathbb{T}^2}(\rho-\bar{\rho})^{2p+1}\,dx+2p\bar{\rho}\int_{\mathbb{T}^2}(\rho-\bar{\rho})^{2p}\,dx.
\end{split}
\end{equation}
To obtain (\ref{64th}), we need to multiply (\ref{chemo1}) by $2p(\rho-\bar{\rho})^{2p-1}$, integrate, and then simplify the obtained terms using integration by parts and the fact that $u$ is divergence free.

Let us estimate $\|\rho-\bar{\rho}\|_{L^{2^n}}$ inductively. By assumption, we have $\|\rho(\cdot, t)-\bar{\rho}\|_{L^2}\leq 2B$ for all $t\in [0,T]$. Assume that for some $n\geq 1$, we have
$$
\|\rho-\bar{\rho}\|_{L^{2^n}}\leq \Upsilon_n \quad\mbox{for all }t\in [0,T].
$$
Let us derive an estimate for an upper bound $\Upsilon_{n+1}$ on $\|\rho-\bar{\rho}\|_{L^{2^{n+1}}}$ on $[0,T]$. For that purpose, let us set $p=2^n$ in (\ref{64th}) and let us define $f(x,t)=(\rho-\bar{\rho})^p\equiv (\rho-\bar{\rho})^{2^n}$. Then (\ref{64th}) implies
\begin{equation}\label{65th}
\partial_t\int_{\mathbb{T}^2} |f|^2\,dx\leq -2 \int_{\mathbb{T}^2}|\nabla f|^2\,dx+ 2^{n+1}\int_{\mathbb{T}^2}|f|^{2+2^{-n}}\,dx+2^{n+1} \bar{\rho}\int_{\mathbb{T}^2}|f|^2.
\end{equation}
Also, in terms of $f$, our induction assumption is that $\int_{\mathbb{T}^2}|f|\,dx\leq \Upsilon_n^{2^{n}}$.

We will now need the following Gagliardo-Nirenberg inequality.

\begin{lem}\label{9.2}
Suppose $v\in C^{\infty}(\mathbb{T}^d)$, $d\geq 2$, and the set where $v$ vanishes is nonempty. Assume that $q,r>0$, $\infty>q>r$, and $\frac{1}{d}-\frac{1}{2}+\frac{1}{r}>0$. Then
\begin{equation}\label{66th}
\|v\|_{L^q}\leq C(d,q)\|\nabla v\|_{L^2}^a\|v\|_{L^r}^{1-a},\quad a=\frac{\frac{1}{r}-\frac{1}{q}}{\frac{1}{d}-\frac{1}{2}+\frac{1}{r}}.
\end{equation}
The constant $C(d,q)$ for a fixed $d$ is bounded uniformly when $q$ varies in any compact set in $(0,\infty)$.
\end{lem}
\begin{proof}
This inequality is well known in the case $v\in C^{\infty}_0(\mathbb{R}^d)$, see e.g. \cite{maz2013sobolev}. A simple proof is contained in \cite{kiselev2012biomixing}. Going through the proof in \cite{kiselev2012biomixing}, it is not difficult to verify that the result still holds in the periodic case under the assumption that $v$ vanishes somewhere in $\mathbb{T}^d$ (which rules out increasing mean value without increasing variance). One can similarly trace the claim regarding the constant $C(d,q)$. We refer to \cite{kiselev2012biomixing} for details.
\end{proof}

Applying Lemma \ref{9.2} with $d=2$, $r=2$, and $q=2+2^{-n}$ yields
\begin{equation}\label{67th}
\|f\|_{L^{2+2^{-n}}}^{2+2^{-n}}\leq C \|\nabla f\|_{L^2}^{2^{-n}}\|f\|_{L^2}^2\leq \frac{1}{2^{n+1}}\|\nabla f\|_{L^2}^2+C\|f\|_{L^2}^{\frac{2}{1-2^{-n-1}}},
\end{equation}
where we used Young's inequality in the last step. Moreover, we also have
\begin{equation}\label{68th}
\|f\|_{L^2}\leq C\|\nabla f\|_{L^2}^{1/2}\|f\|_{L^1}^{1/2}.
\end{equation}
Applying (\ref{68th}) and (\ref{67th}) to (\ref{65th}), we obtain
\begin{equation}\label{69th}
\partial_t \|f\|_{L^2}^2\leq -C_1\|f\|_{L^2}^4\|f\|_{L^1}^{-2}+C_2 2^{n+1}\|f\|_{L^2}^{\frac{2}{1-2^{-n-1}}}+2^{n+1}\bar{\rho}\|f\|_{L^2}^2,
\end{equation}
where $C_{1,2}$ are some fixed universal constants (not connected to $C_1$ and $C_2$ used earlier in the paper). Clearly, given the upper bound on $\|f\|_{L^1}$, the right hand side of (\ref{69th}) turns negative if $\|f\|_{L^{2}}$ becomes sufficiently large. Thus $\|f\|_{L^2}$ cannot cross this threshold. Assuming without loss of generality that $\Upsilon_n\geq 1$ for all $n$, a direct computation shows that if $\|\rho-\bar{\rho}\|_{L^{2^{n+1}}}$ reaches the value $\Upsilon_{n+1}$ which satisfies the following recursive equality, then the right hand side of (\ref{69th}) is negative:
\[ \log \Upsilon_{n+1} = {\rm max}(\Gamma_n, \Theta_n) \]
where
\begin{equation}\label{70th}
\Gamma_n =\frac{2^{n+1}-1}{2^{n+1}-2}\log \Upsilon_n+\frac{1}{2^{n+1}}((n+1)\log 2+\log C)
\end{equation}
\begin{equation}\label{71th}
\Theta_n =\log \Upsilon_n + \frac{1}{2^{n+1}}((n+1)\log 2+\log C +\max (\log \bar{\rho},0)).
\end{equation}
Here $C\geq 1$ is some universal constant. Denote $q_j=\frac{2^{j+1}-1}{2^{j+1}-2}$ and observe that due to telescoping,
$$
\prod_{j=1}^n q_j = \frac{2^{n+1}-1}{2^n}\xrightarrow{n\rightarrow \infty}2.
$$
An elementary inductive computation shows that if $B\gtrsim\bar{\rho}^{1/2}$, then the first recursive relation (\ref{70th}) determines the size of $\Upsilon_{n+1},$
yielding the estimate $\Upsilon_{n+1}\leq CB^2$. If $B\lesssim \bar{\rho}^{1/2}$, then the second relation (\ref{71th}) dominates, yielding the estimate $\Upsilon_{n+1}\leq CB\bar{\rho}^{1/2}$. Since
$$
\|\rho-\bar{\rho}\|_{L^{\infty}}=\lim_{n\rightarrow \infty}\|\rho-\bar{\rho}\|_{L^{2^n}},
$$
we obtain that
$$
\|\rho-\bar{\rho}\|_{L^{\infty}}\leq CB\max(B,\bar{\rho}),
$$
proving the proposition.
\end{proof}
\begin{prop}
Suppose that $f\in C^{\infty}(\mathbb{T}^d)$ and is mean zero. Then
$$
\|D^mf\|_{L^p}\leq C\|f\|_{L^2}^{1-a}\|f\|_{\dot{H}^n}^a, \quad a=\frac{m-\frac{d}{p}+\frac{d}{2}}{n},
$$
where $D$ stands for any partial derivative, $2\leq p\leq \infty$, and we assume $n>m+d/2$.
\end{prop}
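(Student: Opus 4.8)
The plan is to work entirely on the Fourier side, exploiting that $f$ is mean zero so that $\hat f(0)=0$ and all sums run over $k\in\mathbb{Z}^d\setminus\{0\}$. Writing $D=\partial_{x_{j_1}}\cdots\partial_{x_{j_m}}$ for the given string of partial derivatives, the Fourier coefficients satisfy $|\widehat{D^m f}(k)|\leq (2\pi)^m|k|^m|\hat f(k)|$, which reduces every estimate to weighted $\ell^2$ bounds on $\{|\hat f(k)|\}$ against the two available quantities $\|f\|_{L^2}^2=\sum_{k}|\hat f(k)|^2$ and $\|f\|_{\dot H^n}^2=\sum_k|k|^{2n}|\hat f(k)|^2$. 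First I would establish the two endpoints $p=\infty$ and $p=2$, and then fill in $2<p<\infty$ by interpolation.

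For $p=2$ the desired bound is exactly the convexity inequality $\|f\|_{\dot H^m}\leq\|f\|_{L^2}^{1-m/n}\|f\|_{\dot H^n}^{m/n}$, which is a one-line consequence of H\"older applied to $\sum_k|k|^{2m}|\hat f(k)|^2=\sum_k(|\hat f(k)|^2)^{1-m/n}(|k|^{2n}|\hat f(k)|^2)^{m/n}$; here $a=m/n$ is precisely the claimed exponent when $p=2$, since then $d/p=d/2$. The crux is the endpoint $p=\infty$. There I would bound $\|D^m f\|_{L^\infty}\leq(2\pi)^m\sum_{k\neq0}|k|^m|\hat f(k)|$ and split the sum at a frequency radius $R\geq 1$ to be optimized. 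On the low part $0<|k|\leq R$, Cauchy--Schwarz gives $\sum_{0<|k|\leq R}|k|^m|\hat f(k)|\leq(\sum_{0<|k|\leq R}|k|^{2m})^{1/2}\|f\|_{L^2}\leq CR^{m+d/2}\|f\|_{L^2}$, using the lattice-point count $\sum_{0<|k|\leq R}|k|^{2m}\leq CR^{2m+d}$. On the high part I insert the weight $|k|^n$ and again apply Cauchy--Schwarz to obtain $\sum_{|k|>R}|k|^m|\hat f(k)|\leq(\sum_{|k|>R}|k|^{2(m-n)})^{1/2}\|f\|_{\dot H^n}\leq CR^{m-n+d/2}\|f\|_{\dot H^n}$.

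The finiteness of the tail sum $\sum_{|k|>R}|k|^{2(m-n)}\sim R^{2(m-n)+d}$ is precisely where the hypothesis $n>m+d/2$ enters, and this is the main obstacle: it is the only place the restriction is used, and it is the reason the inequality fails without it. Optimizing by choosing $R=(\|f\|_{\dot H^n}/\|f\|_{L^2})^{1/n}$ (which is $\geq 1$ since $|k|\geq1$ forces $\|f\|_{\dot H^n}\geq\|f\|_{L^2}$) balances the two contributions and yields $\|D^m f\|_{L^\infty}\leq C\|f\|_{L^2}^{1-(m+d/2)/n}\|f\|_{\dot H^n}^{(m+d/2)/n}$, i.e. exactly $a=(m+d/2)/n$, which is the claimed value at $p=\infty$ where $d/p=0$; the hypothesis guarantees $a<1$, so no degenerate endpoint arises.

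Finally I would interpolate in $L^p$. Applying the elementary log-convexity bound $\|g\|_{L^p}\leq\|g\|_{L^2}^{2/p}\|g\|_{L^\infty}^{1-2/p}$ to $g=D^m f$ for $2\leq p\leq\infty$ and substituting the two endpoint estimates, the exponent of $\|f\|_{\dot H^n}$ becomes $\frac{2}{p}\cdot\frac{m}{n}+(1-\frac{2}{p})\cdot\frac{m+d/2}{n}=\frac{1}{n}(m+\frac{d}{2}-\frac{d}{p})$, which is exactly $a$; since the $\dot H^n$- and $L^2$-exponents sum to one, the $L^2$ exponent is $1-a$ automatically. This completes the argument, with the constant $C$ depending only on $d,m,n,p$ and left untracked.
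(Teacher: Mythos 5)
Your proof is correct and follows essentially the same route as the paper's: the $p=2$ case by H\"older on the Fourier side, the $p=\infty$ case by splitting the Fourier sum at an optimized frequency threshold and applying Cauchy--Schwarz to each piece (with $n>m+d/2$ guaranteeing convergence of the tail sum), and intermediate $p$ by $L^2$--$L^\infty$ interpolation. The only difference is that you make explicit the interpolation step and the exponent bookkeeping that the paper states as ``follows immediately.''
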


Note that the last assumption is not necessary. However it makes the proof simpler, and this is the only case we need in this paper. Indeed, in the estimates of Section 3 we have $s+1>l+d/2$ unless $l=s$ (recall $d\leq 3$). But when $l=s$, we are only estimating $\|D^s\rho\|_{L^2}$, which is straightforward.
\begin{proof}
Consider $p=2$. Then
\begin{equation}\label{72th}
\|D^mf\|_{L^2}\leq \|f\|_{L^2}^{1-\frac{m}{n}}\|f\|_{\dot{H}^n}^{\frac{m}{n}}
\end{equation}
by H\"older inequality on Fourier side.

Next consider $p=\infty$. Then
$$
\|D^mf\|_{L^{\infty}}\leq C\sum_{0<|k|<\Lambda}|k|^m|\hat{f}(k)|+C\sum_{|k|\geq \Lambda}|k|^m|\hat{f}(k)|\equiv (I)+(II).
$$
Now
$$
(I)\leq C\Lambda^{m+\frac{d}{2}}\left(\sum_{0<|k|<\Lambda}|\hat{f}(k)|^2\right)^{1/2}
$$
by Cauchy-Schwartz. On the other hand,
$$
(II)\leq C\left(\sum_{|k|\geq \Lambda}|k|^{2n}|\hat{f}(k)|^2\right)^{1/2}\left(\sum_{|k|\geq \Lambda}|k|^{2(m-n)}\right)^{1/2}\leq C\|f\|_{\dot{H}^n}\Lambda^{(m-n)+\frac{d}{2}},
$$
provided that $n>m+\frac{d}{2}$. Choose $\Lambda$ so that
$$
\|f\|_{L^2}\Lambda^{m+\frac{d}{2}}=\|f\|_{\dot{H}^n}\Lambda^{m-n+\frac{d}{2}}.
$$
Such choice leads to the bound
\begin{equation}\label{73th}
\|D^mf\|_{L^{\infty}}\leq C \|f\|_{L^2}^{\frac{n-m+d/2}{n}}\|f\|_{\dot{H}^n}^{\frac{m+d/2}{n}}.
\end{equation}

The general case $2<p<\infty$ follows immediately from (\ref{72th}) and (\ref{73th}).
\end{proof}
\begin{prop}
Suppose that $f\in C^{\infty}(\mathbb{T}^d)$, and $m>0$. Then
$$
\|f\|_{\dot{H}^s}\leq C\|f\|_{\dot{H}^{s+1}}^{\frac{2s+d}{2s+2+d}}\|f\|_{L^1}^{\frac{2}{2s+2+d}}.
$$
\end{prop}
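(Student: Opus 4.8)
The plan is to prove this Nash-type inequality (equation~\eqref{16th}) by the standard frequency-splitting argument on the Fourier side, in exactly the spirit of the estimate for $\|D^m f\|_{L^\infty}$ in the preceding proposition. Writing $\|f\|_{\dot{H}^s}^2 = \sum_{k \in \mathbb{Z}^d \setminus \{0\}} |k|^{2s}|\hat{f}(k)|^2$, I would fix a parameter $N \ge 1$ and split the sum into low modes $0 < |k| < N$ and high modes $|k| \ge N$, estimating each piece by a different norm.

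For the high modes I would write $|k|^{2s} = |k|^{-2}\,|k|^{2s+2} \le N^{-2}|k|^{2s+2}$, so that $\sum_{|k| \ge N}|k|^{2s}|\hat{f}(k)|^2 \le N^{-2}\|f\|_{\dot{H}^{s+1}}^2$. For the low modes I would use the elementary bound $|\hat{f}(k)| \le \|f\|_{L^1}$ together with $|k|^{2s} \le N^{2s}$ and the lattice-point count $\#\{k : |k| < N\} \le C N^d$, obtaining $\sum_{0<|k|<N}|k|^{2s}|\hat{f}(k)|^2 \le C N^{2s+d}\|f\|_{L^1}^2$. Combining the two pieces gives
\[ \|f\|_{\dot{H}^s}^2 \le C N^{2s+d}\|f\|_{L^1}^2 + N^{-2}\|f\|_{\dot{H}^{s+1}}^2. \]

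It then remains to optimize in $N$. Balancing the two terms leads to the choice $N = \left(\|f\|_{\dot{H}^{s+1}}/\|f\|_{L^1}\right)^{\frac{2}{2s+2+d}}$; substituting this value and taking a square root yields precisely $\|f\|_{\dot{H}^s} \le C\|f\|_{\dot{H}^{s+1}}^{\frac{2s+d}{2s+2+d}}\|f\|_{L^1}^{\frac{2}{2s+2+d}}$, after checking that the exponent $2 - \frac{4}{2s+2+d} = \frac{2(2s+d)}{2s+2+d}$ on $\|f\|_{\dot{H}^{s+1}}$ and the exponent $\frac{4}{2s+2+d}$ on $\|f\|_{L^1}$ come out correctly.

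The only genuine subtlety — and the one point I expect to need care — is the constraint $N \ge 1$ that the discrete splitting requires: the optimizing $N$ above is smaller than $1$ exactly when $\|f\|_{\dot{H}^{s+1}} < \|f\|_{L^1}$, and in that regime the low-mode sum is empty. I would dispose of this boundary case separately by noting that for every nonzero mode $|k| \ge 1$ one has $|k|^{2s} \le |k|^{2s+2}$, hence $\|f\|_{\dot{H}^s} \le \|f\|_{\dot{H}^{s+1}}$ unconditionally; since $\frac{2}{2s+2+d} > 0$, the hypothesis $\|f\|_{\dot{H}^{s+1}} < \|f\|_{L^1}$ then gives $\|f\|_{\dot{H}^{s+1}} = \|f\|_{\dot{H}^{s+1}}^{\frac{2s+d}{2s+2+d}}\|f\|_{\dot{H}^{s+1}}^{\frac{2}{2s+2+d}} \le \|f\|_{\dot{H}^{s+1}}^{\frac{2s+d}{2s+2+d}}\|f\|_{L^1}^{\frac{2}{2s+2+d}}$, which is the claimed bound. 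Everything else is routine and requires no special structure beyond periodicity and the Fourier-coefficient bound $|\hat{f}(k)|\le\|f\|_{L^1}$.
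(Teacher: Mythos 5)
Your proof is correct and is exactly the argument the paper intends: the paper only sketches this proposition, saying it is proved "similarly to the previous one" (frequency splitting on the Fourier side) using $\|\hat f\|_{L^\infty}\le\|f\|_{L^1}$, which is precisely your low/high mode decomposition with the lattice-point count and optimization in $N$. Your explicit treatment of the boundary case $N<1$ is a sensible addition that the paper leaves implicit.
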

\begin{proof}
The proof of this proposition can be done similarly to the previous one. One needs to use that $\|\hat{f}\|_{L^{\infty}}\leq \|f\|_{L^1}$. We leave details to the interested reader.
\end{proof}


\end{document}